\documentclass[a4paper,12pt]{amsart}
\usepackage{amsfonts,amsmath,amssymb,euscript,inputenc}
\usepackage{graphicx,psfrag,subfigure,xcolor}

\textwidth450pt
\hoffset-40pt
\voffset-20pt
\headsep+30pt
\textheight610pt

\title[Wave and Klein--Gordon equations on hyperbolic spaces]
{Wave and Klein--Gordon equations\\on hyperbolic spaces}

\author{Jean--Philippe Anker}
 
\address{Universit\'e d'Orl\'eans \& CNRS,
F\'ed\'eration Denis Poisson (FR 2964) \& Laboratoire MAPMO (UMR 6628),
B\^atiment de math\'ematiques -- Route de Chartres,
B.P. 6759 -- 45067 Orl\'eans cedex 2 -- France}
 
\email{anker@univ-orleans.fr}

\author{Vittoria Pierfelice}

\address{Universit\'e d'Orl\'eans \& CNRS,
F\'ed\'eration Denis Poisson (FR 2964) \& Laboratoire MAPMO (UMR 6628),
B\^atiment de math\'ematiques -- Route de Chartres,
B.P. 6759 -- 45067 Orl\'eans cedex 2 -- France}
 
\email{vittoria.pierfelice@univ-orleans.fr}

\date{\today}

\subjclass[2000]{35L05, 43A85\,;
22E30, 35L71, 43A90, 47J35, 58D25, 58J45, 81Q05}

\keywords{Hyperbolic space, wave kernel,
semilinear wave equation, semilinear Klein--Gordon equation,
dispersive estimate, Strichartz estimate, global well--posedness}

\newtheorem{lemma}{Lemma}[section]
\newtheorem{theorem}[lemma]{Theorem}

\newtheorem{corollary}[lemma]{Corollary}
\newtheorem{remark}[lemma]{Remark}
\newtheorem{definition}[lemma]{Definition}

\newcommand\1{1\hskip-1mm\text{\rm I}}
\newcommand{\bc}{\mathbf{c}\hspace{.1mm}}
\newcommand{\C}{\mathbb{C}}
\newcommand\const{\operatorname{const.}}
\newcommand\gammazero{\gamma_{\ssf0}}
\newcommand\gammaone{\gamma_{\ssf1}}
\newcommand\gammaconf{\gamma_{\ssf\text{\rm conf}}}
\newcommand\gammatwo{\gamma_{\ssf2}}

\newcommand\gammathree{\gamma_{\ssf3}}
\newcommand\gammafour{\gamma_{\ssf4}}
\newcommand{\Hn}{\mathbb{H}^n}
\renewcommand\Im{\operatorname{Im}}
\newcommand{\N}{\mathbb{N}}
\newcommand\ptwo{p_{\ssf2}}
\newcommand\qtwo{q_{\ssf2}}
\newcommand\tildeqtwo{\tilde{q}_{\ssf2}}

\newcommand\qthree{q_{\ssf3}}
\newcommand\tildeqthree{\tilde{q}_{\ssf3}}
\renewcommand\Re{\operatorname{Re}}
\newcommand{\R}{\mathbb{R}}
\newcommand{\Rn}{\R^n}
\newcommand{\Sn}{\mathbb{S}^{n-1}}
\newcommand\ssb{\hskip-.25mm}
\newcommand\ssf{\hskip.25mm}
\newcommand\supp{\operatorname{supp}}
\newcommand{\Z}{\mathbb{Z}}

\begin{document}

\begin{abstract}
We consider the Klein--Gordon equation
associated with the Laplace--Beltrami operator \ssf$\Delta$
\ssf on real hyperbolic spaces of dimension \ssf$n\!\ge\!2$\ssf;
as \ssf$\Delta$ \ssf has a spectral gap,
the wave equation is a particular case of our study.
After a careful kernel analysis,
we obtain dispersive and Strichartz estimates
for a large family of admissible couples.
As an application, we prove global well--posedness results
for the corresponding semilinear equation with low regularity data.
\end{abstract}

\maketitle

\section{Introduction}
\label{Introduction}

Dispersive properties of the wave and other evolution equations
have been proved very useful in the study of nonlinear problems.
The theory is well established
for the Euclidean wave equation in dimension \ssf$n\!\ge\!3$\,:
\begin{equation}\label{WaveEuclidean}
\begin{cases}
\;\partial_{\ssf t}^{\ssf 2}u(t,x)-\Delta_{\ssf x}u(t,x)=F(t,x)\ssf,\\
\;u(0,x)=f(x)\ssf,
\;\partial_{\ssf t}|_{t=0}\,u(t,x)=g(x)\ssf.\\
\end{cases}
\end{equation}
The following Strichartz estimates hold for
solutions \ssf$u$ to the Cauchy problem \eqref{WaveEuclidean}\,:
\begin{equation*}
\|\ssf\nabla_{\R\times\Rn}u\ssf\|_{L^p(I;\ssf\dot{H}^{-\sigma,q}(\Rn))}\ssb
\lesssim\ssf\|\ssf f\ssf\|_{\dot{H}^1(\Rn)}\ssb
+\ssf\|\ssf g\ssf\|_{L^2(\Rn)\vphantom{\dot{H}^1}}\ssb
+\ssf\|\ssf F\ssf\|_{L^{\tilde{p}'}(I;\ssf\dot{H}^{\tilde{\sigma},\,\tilde{q}'}(\Rn))}    
\end{equation*}
on any (possibly unbounded) time interval \ssf$I\!\subseteq\ssb\R$\ssf,
under the assumptions that  
\begin{equation*}\textstyle
\sigma\ge\frac{n+1}2\ssf\bigl(\frac12\ssb-\ssb\frac1q\bigr)\,,\quad 
\tilde{\sigma}\ge\frac{n+1}2\ssf\bigl(\frac12\ssb-\ssb\frac1{\tilde{q}}\bigr)\,,
\end{equation*}
and the couples \ssf
$(p,q),(\tilde{p},\tilde{q})\ssb\in\ssb[\ssf2,\infty\ssf]\!\times\![\ssf2,\infty\ssf)$
\ssf satisfy 
\begin{equation}\label{EuclideanAdmissibility}\textstyle
\frac2p\ssb+\ssb\frac{n-1}q\ssb=\ssb\frac{n-1}2\ssf,\quad
\frac2{\tilde{p}}\ssb+\ssb\frac{n-1}{\tilde{q}}\ssb=\ssb\frac{n-1}2\ssf.
\end{equation}
We refer to \cite{GV} and \cite{KT} for more details.

These estimates serve as a tool for several existence results
about the nonlinear wave equation in the Euclidean setting.
The problem of finding minimal regularity conditions on the initial data
ensuring local well--posedness for semilinear wave equations
was addressed in \cite{Kap} and then almost completely answered in \cite{LS, KT}
(see Figure \ref{LocalRegularityEuclidean} in Section \ref{GWP}).
In general local solutions cannot be extended to global ones,
unless further assumptions are made on the nonlinearity or on the initial data.
A successful machinery was developed
towards the global existence of small solutions
to the semilinear wave equation
\begin{equation}\label{SLWEuclidean}
\begin{cases}
\;\partial_{\ssf t}^{\ssf 2} u(t,x)-\Delta_{\ssf x}u(t,x)=F(u)\ssf,\\
\;u(0,x)=f(x)\ssf,
\;\partial_{\ssf t}|_{t=0}\,u(t,x)=g(x)\ssf,\\
\end{cases}
\end{equation}
with nonlinearities
\begin{equation}\label{eq:nonlinin}
F(u)\sim|u|^{\gamma}\quad\text{near 0.}
\end{equation}
The results depend on the space dimension $n$\ssf.
After the pioneer work \cite{J} of John in dimension $n\!=\!3$\ssf,
Strauss conjectured in \cite{Stra}
that the problem \eqref{SLWEuclidean}
is globally well posed  in dimension $n\!\ge\!2$
for small initial data provided
\begin{equation}\label{strauss}
\textstyle
\gamma>\gammazero=\frac12\ssb+\ssb\frac1{n-1}
+\sqrt{\bigl(\frac12\ssb+\ssb\frac1{n-1}\bigr)^2\!+\ssb\frac2{n-1}\,}\ssf.
\end{equation}
On one hand,
the negative part of the conjecture
was established by Sideris \cite{Si},
who proved blow up for generic data
(and nonlinearities satisfying $F(u)\!\gtrsim\!|u|^{\gamma}$)
when $\gamma\!<\!\gammazero\ssf$.
On the other hand,
the positive part of the conjecture
was proved for any dimension in several steps
(see e.g.~\cite{KP}, \cite{GLS}, \cite{DGK},
and \cite{G} for a comprehensive survey).

Analogous results hold for the Klein--Gordon equation
$$
\partial_{\ssf t}^{\ssf 2}u(t,x)-\Delta_{\ssf x}u(t,x)+u(t,x)=F(t,x)\ssf,
$$
though its study has not been carried out as thoroughly as for the wave equation\,;
in particular the sharpness of several well--posedness results is yet unknown
(see \cite{BG}, \cite{GV1}, \cite{MNO}, \cite{N} and the references therein).

In view of the rich Euclidean theory,
it is natural to look at the corresponding equations
on more general manifolds.
Here we consider real hyperbolic spaces \ssf$\Hn$,
which are the most simple examples of
noncompact Riemannian manifolds with negative curvature.
For geometric reasons,
we expect better dispersive properties hence stronger results
than in the Euclidean setting.

Consider the wave equation associated to
the Laplace--Beltrami operator \ssf$\Delta\ssb=\ssb\Delta_{\Hn}$
on \ssf$\Hn$\,:
\begin{equation}\label{nonshiftedWave}
\begin{cases}
\;\partial_{\ssf t}^{\ssf 2}u(t,x)-\Delta_{\ssf x}u(t,x)=F(t,x)\ssf,\\
\;u(0,x)=f(x)\ssf,
\;\partial_t|_{t=0}\,u(t,x)=g(x)\ssf,
\end{cases}
\end{equation}
The operator $-\ssf\Delta$ is positive on $L^2(\Hn)$
and its $L^2$--\ssf spectrum
is the half--line \ssf$[\ssf\rho^{\ssf2},+\infty)\ssf$,
where \ssf$\rho\ssb=\!\frac{n-1}2$.
Thus \eqref{nonshiftedWave} may be considered
as a special case of the following family of Klein--Gordon equations
\begin{equation}\label{KleinGordonC}
\begin{cases}
\;\partial_{\ssf t}^{\ssf 2}u(t,x)-\Delta_{\ssf x}u(t,x)+c\,u(t,x)=F(t,x)\ssf,\\
\;u(0,x)=f(x)\ssf,
\;\partial_t|_{t=0}\,u(t,x)=g(x)\ssf,
\end{cases}\end{equation}
where
\begin{equation}\label{constantc}\textstyle
c\ge\ssb-\ssf\rho^{\ssf2}=\ssb-\ssf\frac{(n-1)^2}4
\end{equation}
is a constant.
In the limit case \ssf$c\ssb=\!-\ssf\rho^{\ssf2}$\ssb,
\eqref{KleinGordonC} is
called the {\it shifted\/} wave equation.

In \cite{P} Pierfelice obtained Strichartz estimates
for the non--shifted wave equation \eqref{nonshiftedWave}
with radial data on a class of Riemannian manifolds containing all hyperbolic spaces.
The wave equation \eqref{nonshiftedWave} was also investigated
on the 3--dimensional hyperbolic space by Metcalfe and Taylor \cite{MT},
who proved dispersive and Strichartz estimates
with applications to small data global well--posedness
for the semilinear wave equation.
In his recent thesis \cite{Ha},
Hassani obtains a first set of results
on noncompact Riemannian symmetric spaces of higher rank.

To our knowledge, the shifted wave equation \eqref{KleinGordonC}
in the limit case \ssf$c\ssb=\!-\ssf\rho^{\ssf2}$
\ssf was first considered by Fontaine \cite{F1, F2}
in low dimensions \ssf$n\!=\!3$ \ssf and \ssf$n\!=\!2$\ssf.
In \cite{Ta} Tataru obtained dispersive estimates for the operators
$\frac{\sin\ssf\left(t\,\sqrt{\Delta\ssf+\ssf\rho^{\ssf2}\ssf}\ssf\right)}
{\sqrt{\Delta\ssf+\ssf\rho^{\ssf2}\ssf}}$
and $\cos\ssf\bigl(\ssf t\ssf\sqrt{\Delta\ssb+\ssb\rho^{\ssf2}\ssf}\ssf\bigr)$
acting on inhomogeneous Sobolev spaces on \ssf$\Hn$
and then transferred them to \ssf$\Rn$
in order to get well--posedness results
for the Euclidean semilinear wave equation
(see also \cite{G}).
Complementary results were obtained by A. Ionescu \cite{I1},
who investigated $L^q\!\to\!L^q$ Sobolev estimates
for the above operators on all hyperbolic spaces.

A more detailed analysis of the shifted wave equation
was carried out in \cite{APV2}.
There Strichartz estimates were obtained
for a wider range of couples than in the Euclidean setting
and consequently
stronger well--posedness results were shown to hold
for the nonlinear equations.
Corresponding results for the Schr\"odinger equation
were obtained in \cite{AP}, \cite{APV1} and \cite{IS}.

In the present paper we study
the family of equations \eqref{KleinGordonC}
in the remaining range \ssf$c\ssb>\!-\ssf\rho^{\ssf2}$
and in dimension \ssf$n\!\ge\!2$\ssf,
which includes the particular case
\ssf$c\!=\!0$ \ssf and \ssf$n\!=\!3$ \ssf
considered in \cite{MT}.
In order to state and describe our results,
it is convenient to rewrite the constant \eqref{constantc} as follows\,:
\begin{equation}\label{constantkappa}
c=\kappa^{2}\ssb-\rho^{\ssf2}
\quad\text{ \;with}\quad\kappa\!>\!0\ssf,
\end{equation}
and to introduce the operator
\begin{equation}\label{operatorD}
D=\ssb\sqrt{-\ssf\Delta\ssb-\ssb\rho^{\ssf2}\!+\ssb\kappa^2\ssf}\,,
\end{equation}
as well as
\begin{equation}\label{operatorDtilde}
\widetilde{D}
=\ssb\sqrt{-\ssf\Delta\ssb-\ssb\rho^{\ssf2}\!+\ssb\tilde\kappa^2\ssf}\,,
\end{equation}
where \ssf$\tilde\kappa\!>\!\rho$ \ssf is another fixed constant.
Thus our family of equations \eqref{KleinGordonC} becomes
\begin{equation}\label{IP}
\begin{cases}
\;\partial_{\ssf t}^{\,2}u(t,x)-D_x^{\ssf2}\ssf u(t,x)=F(t,x)\ssf,\\
\;u(0,x)=f(x)\ssf,
\;\partial_t|_{t=0}\,u(t,x)=g(x)\ssf,
\end{cases}\end{equation}
the wave equation \eqref{nonshiftedWave}
corresponding to the choice \ssf$\kappa\ssb=\ssb\rho$
\ssf and the shifted wave equation
to the limit case \ssf$\kappa\ssb=\ssb0$\ssf.

Let us now describe the content of this paper
and present our main results,
that we state for simplicity in dimension \ssf$n\!\ge\!3$\ssf.
In Section \ref{Notation},
we recall the basis tools of spherical Fourier analysis
on real hyperbolic spaces $\Hn$.
After analyzing carefully the integral kernel of the half wave operator
\begin{equation*}
W_t^{\ssf\sigma}=\widetilde{D}^{-\sigma}\ssf e^{\ssf i\ssf t\ssf D}
\end{equation*}
in Section \ref{Kernel},
we prove in Section \ref{Dispersive} the following dispersive estimates,
which combine the small time estimates \cite{APV2} for the shifted wave equation
and the large time estimates \cite{AP} for the Schr\"odinger equation\,:
\begin{equation*}
\bigl\|\,W_t^{\ssf\sigma}\ssf\bigr\|_{L^q\to L^{q'}}
\lesssim\,\begin{cases}
\;|t|^{-(n-1)\ssf(\frac12-\frac1q)}
&\text{if \;}0\!<\!|t|\!<\!1\ssf,\\
\;|t|^{-\frac32}
&\text{if \;}|t|\!\ge\!1\ssf,
\end{cases}\end{equation*}
where \ssf$2\!<\!q\!<\!\infty$ \ssf and
\ssf$\sigma\!\ge\!(n\!+\!1)\ssf(\frac12\!-\!\frac1q)$\ssf.
Notice that we don't deal with the limit case
\linebreak
\ssf$q\!=\!\infty$\ssf,
where Metcalfe and Taylor \cite{MT} have obtained
an \ssf$H^1\hspace{-1mm}\to\!BMO$ \ssf estimate
in dimension \ssf$n\!=\!3$\ssf.

In Section \ref{Strichartz}
we deduce the Strichartz estimates
\begin{equation*}
\|\ssf\nabla_{\R\times\Hn}u\ssf\|_{\vphantom{\big|}
L^p(I;\ssf H^{-\sigma,\hspace{.1mm}q}(\Hn))}
\lesssim\,\|\ssf f\ssf\|_{H^1(\Hn)\vphantom{\big|}}\ssb
+\,\|\ssf g\ssf\|_{L^2(\Hn)\vphantom{\big|}}\ssb
+\,\|\ssf F\ssf\|_{\vphantom{\big|}
L^{\tilde{p}'}\!(I;\ssf H^{\tilde{\sigma}\ssb,\tilde{q}'}\!(\Hn))}
\end{equation*}
for solutions \ssf$u$ to \eqref{IP}.
Here \ssf$I\!\subseteq\ssb\R$ \ssf is any time interval,
\begin{equation*}\textstyle
\sigma\ge\frac{n+1}2\ssf\bigl(\frac12\ssb-\ssb\frac1q\bigr)\,,\quad 
\tilde{\sigma}\ge\frac{n+1}2\ssf\bigl(\frac12\ssb-\ssb\frac1{\tilde{q}}\bigr)\,,
\end{equation*}
and the couples
$\bigl(\frac1p,\frac1q\bigr)$, $\bigl(\frac1{\tilde{p}},\frac1{\tilde{q}}\bigr)$
belong to the triangle
\begin{equation}\label{admissibility}\textstyle
\bigl\{\ssf\bigl(\frac1p,\frac1q\bigr)\!\in\!
\bigl(0,\frac12\bigr]\!\times\!\bigl(0,\frac12\bigr)\,\big|\;
\frac1p\!\ge\!\frac{n-1}2\ssf\bigl(\frac12\!-\!\frac1q\bigr)\ssf\bigr\}
\,\cup\,\bigl\{\bigl(0,\frac12\bigr)\bigr\}\,.
\end{equation}
These estimates are similar to those obtained in \cite{APV2}
for the shifted wave equation,
except that they involve standard Sobolev spaces and no exotic ones.
Notice that the range \eqref{admissibility} of admissible couples for \ssf$\Hn$
is substantially wider than the range \eqref{EuclideanAdmissibility} for \ssf$\Rn$,
which corresponds to the lower edge of the triangle \eqref{admissibility}.

In Section 6 we apply the results of the previous sections
to the problem of global existence with small data
for the corresponding semilinear equations.
In contrast with the Euclidean case,
where the range of admissible nonlinearities \ssf$F(u)\!\sim\!|u|^{\gamma}$
\ssf is restricted to \ssf$\gamma\!>\!\gammazero$\ssf,
we prove global well--posedness
for powers \ssf$\gamma\!>\!1$
\ssf arbitrarily close to \ssf$1$\ssf.
This result improves in particular \cite{MT},
where global well--posedness for \eqref{nonshiftedWave}
was obtained in the case \ssf$n\!=\!3$ and \ssf$\kappa\!=\!\rho$
under the assumption \ssf$\gamma\!>\!5/3$\ssf.

In conclusion, the fact that
better results hold for \ssf$\Hn$ than for \ssf$\Rn$
may be regarded as a consequence of
the stronger dispersion properties of waves in negative curvature.
Besides our results extend to the more general setting of Damek--Ricci spaces,
as done in \cite{APV1} for the Schr\"odinger equation
and in \cite{APV2} for the shifted wave equation.

\section{Spherical analysis on real hyperbolic spaces}
\label{Notation}

In this paper, we consider the simplest class of
Riemannian symmetric spaces of the noncompact type,
namely real hyperbolic spaces $\Hn$ of dimension \ssf$n\!\ge\!2$\ssf.
We refer to Helgason's books \cite{He1, He2, He3}
and to Koornwinder's survey \cite{Ko}
for their algebraic structure and geometric properties,
as well as for harmonic analysis on these spaces,
and we shall be content with the following information.
$\Hn$~can be realized as the symmetric space $G/K$,
where $G\ssb=\ssb\text{SO}(1,n)_0$ and $K\!=\ssb\text{SO}(n)$\ssf.
In geodesic polar coordinates,
the Laplace--Beltrami operator on $\Hn$ writes
\begin{equation*}\textstyle
\Delta_{\ssf\Hn}\ssb
=\partial_r^{\ssf2}
+(n\!-\!1)\,\coth r\,\partial_r
+\ssf\sinh^{-2}\ssb r\,\Delta_{\,\Sn}\,.
\end{equation*}
The spherical functions \ssf$\varphi_\lambda$ on \ssf$\Hn$ are
normalized radial eigenfunctions of \ssf$\Delta\!=\!\Delta_{\ssf\Hn}$\,:
\begin{equation*}\begin{cases}
\;\Delta\,\varphi_\lambda=-(\lambda^2\!+\!\rho^{\ssf2})\,\varphi_\lambda\,,\\
\;\varphi_\lambda(0)=1\,,
\end{cases}\end{equation*}
where \ssf$\lambda\!\in\!\mathbb C$ \ssf
and \ssf$\rho\ssb=\ssb\frac{n-1}2$\ssf.
They can be expressed in terms of special functions\,:
\begin{equation*}\textstyle
\varphi_\lambda(r)
=\phi_{\,\lambda}^{(\frac n2-1,-\frac12)}(r)
={}_2F_1\bigl(\frac\rho2\ssb+\ssb i\ssf\frac\lambda2,
\frac\rho2\ssb-\ssb i\ssf\frac\lambda2;
\frac n2;-\sinh^2\ssb r\bigr)\ssf,
\end{equation*}
where $\phi_\lambda^{(\alpha,\beta)}$ denotes the Jacobi functions
and ${}_2F_1$ the Gauss hypergeometric function.
In the sequel we shall use the Harish--Chandra formula
\begin{equation}\label{intrepr}
\varphi_\lambda(r)\,=\int_Kdk\;e^{-(\rho+i\lambda)\,\text{H}(a_{-r}k)}
\end{equation}
and the basic estimate
\begin{equation}\label{phi0}
|\ssf\varphi_\lambda(r)|
\le\varphi_0(r)
\lesssim(1\!+\ssb r)\,e^{-\rho\ssf r}
\qquad\forall\;\lambda\!\in\!\R\,,\;r\!\ge\!0\,.
\end{equation}
We shall also use the Harish--Chandra expansion
\begin{equation}\label{HCexpansion}
\varphi_\lambda(r)
=\bc(\lambda)\,\Phi_\lambda(r)+\bc(-\lambda)\,\Phi_{-\lambda}(r)
\qquad\forall\;\lambda\!\in\!\C\!\smallsetminus\!\Z\ssf,\;r\!>\!0\ssf,
\end{equation}
where the Harish--Chandra $\bc$--function is given by
\begin{equation}\label{cfunction}\textstyle
\bc(\lambda)=\ssf\frac{\Gamma(2\rho)}{\Gamma(\rho)}\ssf
\frac{\Gamma(i\lambda)}{\Gamma(i\lambda+\rho)}
\end{equation}
and
\begin{equation}\label{Philambda}\begin{aligned}
\Phi_{\lambda}(r)&\textstyle
\,=(\ssf2\sinh r)^{\ssf i\lambda-\rho}\,{}_2F_1\bigl(
\frac\rho2\!-\!i\ssf\frac\lambda2,-\frac{\rho-1}2\!-\!i\ssf\frac\lambda2\ssf;
1\!-\!i\ssf\lambda\ssf;-\sinh^{-2}\ssb r\bigr)\\
&=\,(\ssf2\sinh r)^{-\rho}\,e^{\ssf i\ssf\lambda\ssf r}\,
\sum\nolimits_{k=0}^{+\infty}\,\Gamma_k(\lambda)\,e^{-2\ssf k\ssf r}\\
&\sim\,e^{\ssf(i\lambda-\rho)\ssf r}
\qquad\text{as \,}r\!\to\!+\infty\,.
\end{aligned}\end{equation}
The coefficients \ssf$\Gamma_k(\lambda)$
\ssf in the expansion \eqref{Philambda}
are rational functions of \ssf$\lambda\!\in\!\C$\ssf,
which satisfy the recurrence formula
\begin{equation*}\begin{cases}
\,\Gamma_0(\lambda)=1\ssf,\\
\,\Gamma_k(\lambda)
=\frac{\rho\,(\rho-1)}{k\,(k-i\lambda)}\,
\sum_{j=0}^{k-1}\,(k\,-\,j)\,\Gamma_j(\lambda)\,.
\end{cases}\end{equation*}
Their classical estimates were improved as follows in \cite[Lemma 2.1]{APV1}.

\begin{lemma}
Let \,$0\!<\!\varepsilon\!<\!1$
and \,$\Omega_\varepsilon\ssb
=\{\,\lambda\!\in\!\C\mid
\Re\lambda\ssb\le\ssb\varepsilon\ssf|\lambda|\ssf,
\,\Im\lambda\ssb\le\!-1\!+\!\varepsilon\,\}$\ssf.
Then there exist \,$\nu\!\ge\!0$ and,
for every \,$\ell\!\in\!\N$\ssf,
\ssf$C_\ell\!\ge\!0$ such that
\begin{equation}\label{derGammakappa}
\bigl|\,\partial_\lambda^{\,\ell}\ssf\Gamma_k(\lambda)\,\bigr|
\le C_\ell\,k^{\ssf {\nu}}\,(\ssf1\!+\ssb|\lambda|\ssf)^{-\ell-1}
\qquad\forall\;k\!\in\!\N^*,\,
\lambda\!\in\!\C\ssb\smallsetminus\ssb\Omega_\varepsilon\,.
\end{equation}
\end{lemma}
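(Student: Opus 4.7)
The plan is to prove the estimate by double induction --- outer on the derivative order $\ell$ and inner on the index $k$ --- based on the recurrence
\[
\Gamma_k(\lambda)\,=\,\frac{\rho(\rho-1)}{k\ssf(k-i\lambda)}\sum_{j=0}^{k-1}(k-j)\,\Gamma_j(\lambda).
\]
The main analytic input is a sharp bidirectional lower bound on $|k-i\lambda|$ on the complement of $\Omega_\varepsilon$.

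I would first establish that, for every integer $k\ge 1$ and every $\lambda\in\C\smallsetminus\Omega_\varepsilon$,
\[
|k-i\lambda|\,\gtrsim_\varepsilon\,\max\bigl(k,\,1+|\lambda|\bigr)\ssf,
\]
by a short case analysis following the definition of $\Omega_\varepsilon$: if $\Re\lambda>\varepsilon\ssf|\lambda|$, then $|k-i\lambda|\ge|\Re\lambda|>\varepsilon|\lambda|$; if $\Im\lambda>-1+\varepsilon$, then $k+\Im\lambda>\varepsilon$ for every $k\ge 1$; and the elementary triangle inequality $|k-i\lambda|\ge k-|\lambda|$ handles the regime $|\lambda|\le k/2$. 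As an immediate corollary, for every $m\ge 0$,
\[
\bigl|\ssf\partial_\lambda^{\ssf m}(k-i\lambda)^{-1}\ssf\bigr|\,=\,m!\ssf|k-i\lambda|^{-m-1}\,\lesssim_{\varepsilon,\ssf m}\,k^{-1}(1+|\lambda|)^{-m}\ssf,
\]
which supplies both a $k^{-1}$ and a $(1+|\lambda|)^{-m}$ factor at once.

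For the base case $\ell=0$, I would induct on $k$: the case $k=1$ is immediate from $\Gamma_1=\rho(\rho-1)/(1-i\lambda)$. For $k\ge 2$, the $j=0$ term in the recurrence contributes $k\ssf|\Gamma_0|=k$, which is absorbed by the factor $(k-i\lambda)^{-1}\lesssim(1+|\lambda|)^{-1}$, while the $j\ge 1$ terms are handled by the inner inductive hypothesis together with the combinatorial estimate
\[
\sum_{j=1}^{k-1}(k-j)\,j^{\ssf\nu}\,\le\,\tfrac{k^{\nu+2}}{\nu+1}\ssf,
\]
whose prefactor $1/(\nu+1)$ can be made arbitrarily small by taking $\nu$ large. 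The outer factor $|k(k-i\lambda)|^{-1}\lesssim k^{-2}$ then reduces $k^{\nu+2}$ to $k^\nu$, and choosing $\nu$ large enough from the outset makes the step $k-1\!\to\!k$ preserve the exponent $\nu$. For $\ell\ge 1$, I would differentiate the recurrence by Leibniz: the terms with $\ell_2<\ell$ are controlled by the outer inductive hypothesis and actually gain an extra power of $(1+|\lambda|)^{-1}$ (more than needed), whereas the single term with $\ell_2=\ell$ is treated exactly as in the base case by the inner induction on $k$.

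The main obstacle is to keep $\nu$ independent of $\ell$. A naive argument using only one of the two bounds $|k-i\lambda|^{-1}\lesssim(1+|\lambda|)^{-1}$ or $|k-i\lambda|^{-1}\lesssim k^{-1}$ loses respectively one power of $k$ or one power of $(1+|\lambda|)^{-1}$ per step, which would eventually force $\nu$ to grow with $\ell$. The bidirectional estimate on $|k-i\lambda|$ established in the first step, combined with the vanishing prefactor $1/(\nu+1)$ in the combinatorial bound above, is exactly what makes the double induction close with a single exponent $\nu$ valid uniformly in $\ell$.
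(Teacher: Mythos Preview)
The paper does not supply its own proof of this lemma; it is quoted from \cite[Lemma~2.1]{APV1}. Your double-induction scheme based on the recurrence is the natural approach and is essentially correct: the bidirectional bound $|k-i\lambda|\gtrsim_\varepsilon\max(k,1+|\lambda|)$ on $\C\smallsetminus\Omega_\varepsilon$ is indeed the key input, and the combinatorial gain $\sum_{j=1}^{k-1}(k-j)\,j^\nu\le k^{\nu+2}/(\nu+1)$ is exactly what allows the inner induction on $k$ to close without increasing $\nu$.

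One small correction: your parenthetical claim that the Leibniz terms with $\ell_2<\ell$ ``actually gain an extra power of $(1+|\lambda|)^{-1}$'' is not accurate. With the balanced estimate $|k-i\lambda|^{-\ell_1-1}\lesssim k^{-1}(1+|\lambda|)^{-\ell_1}$ --- which is what you must use to keep the $k$-exponent at $\nu$ after summing over $j$ --- the resulting $(1+|\lambda|)$-power is exactly $-\ell-1$, not better. Routing all of $|k-i\lambda|^{-\ell_1-1}$ into $(1+|\lambda|)^{-\ell_1-1}$ would indeed produce the extra decay in $\lambda$, but then only the prefactor $k^{-1}$ remains and the sum over $j$ yields $k^{\nu+1}$, precisely the loss you warn against in your final paragraph. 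Drop the parenthetical; with that adjustment the induction closes with a single $\nu$ depending only on $\rho$ and $\varepsilon$, and the argument is sound.
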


\noindent
Under suitable assumptions,
the spherical Fourier transform
of a bi--$K$\!--invariant function $f$ on $G$
is defined by 
\begin{equation*}
\mathcal{H}f(\lambda)=\int_Gdg\,f(g)\,\varphi_{\lambda}(g)
\end{equation*}
and the following inversion formula holds\,:
\begin{equation*}
f(x)=\const\int_{\,0}^{+\infty}\hspace{-1mm}
d\lambda\,|\ssf\bc(\lambda)|^{-2}\,
\mathcal{H}f(\lambda)\,\varphi_{\lambda}(x)\,.
\end{equation*}
Here is a well--known estimate of the Plancherel density\,:
\begin{equation}\label{estimatec}
|\ssf\bc(\lambda)\ssf|^{-2}
\lesssim\,|\lambda|^2\,(1\!+\!|\lambda|)^{n-3}
\qquad\forall\;\lambda\!\in\!\R\,.
\end{equation}
Via the spherical Fourier transform,
the Laplace--Beltrami operator \ssf$\Delta$ \ssf corresponds to
\begin{equation*}
-\ssf\lambda^2\ssb-\rho^{\ssf2},
\end{equation*}
hence the operators \,$D\ssb
=\!\sqrt{-\Delta\!-\!\rho^{\ssf2}\hspace{-.75mm}+\!\kappa^2\ssf}$
\,and \,$\widetilde{D}\ssb
=\!\sqrt{-\Delta\!-\!\rho^{\ssf2}\hspace{-.75mm}+\!\tilde{\kappa}^2\ssf}$
\,to
\begin{equation*}
\sqrt{\ssf\lambda^2\ssb+\kappa^2\ssf}
\quad\text{and}\quad
\sqrt{\ssf\lambda^2\ssb+\tilde{\kappa}^2\ssf}\,.
\end{equation*}

\section{Kernel estimates}
\label{Kernel}

In this section we derive pointwise estimates
for the radial convolution kernel \ssf$w_{\ssf t}^{\ssf\sigma}$
of the operator \ssf$W_t^{\ssf\sigma}\!
=\ssb\widetilde{D}^{\ssf-\sigma}\,e^{\,i\,t\ssf D}$,
for suitable exponents  \ssf$\sigma\!\in\!\R$\ssf.
By the inversion formula of the spherical Fourier transform,
\begin{equation*}
w_{\ssf t}^{\ssf\sigma}(r)\ssf
=\const\int_{\,-\infty}^{+\infty}\hspace{-1mm}d\lambda\;
|\ssf\bc(\lambda)|^{-2}\,
\,(\lambda^2\!+\ssb{\tilde\kappa}^{\ssf2})^{-\frac{\sigma}2}\,
\varphi_\lambda(r)\,
e^{\,i\,t\ssf \sqrt{\lambda^2+\ssf\kappa^2\ssf}}\ssf.
\end{equation*}
Contrarily to the Euclidean case,
this kernel has different behaviors,
depending whether \ssf$t$ \ssf is small or large,
and therefore we cannot use any rescaling.
Let us split up
\begin{equation*}\begin{aligned}
w_{\ssf t}^{\ssf\sigma}(r)\ssf
&=\,w_{\,t}^{\sigma,0}(r)+\ssf w_{\,t}^{\sigma,\infty}(r)\\
&=\,\const\int_{-\infty}^{+\infty}\!d\lambda\,
\chi_0(\lambda)\,|\ssf\bc(\lambda)|^{-2}\,
(\lambda^2\!+\ssb{\tilde\kappa}^{\ssf2})^{-\frac{\sigma}2}\,
\varphi_\lambda(r)\;e^{\,i\,t\ssf\sqrt{\lambda^2+\ssf\kappa^2\ssf}}\\
&+\,\const\int_{-\infty}^{+\infty}\hspace{-1mm}d\lambda\,
\chi_\infty(\lambda)\,|\ssf\bc(\lambda)|^{-2}\,
(\lambda^2\!+\ssb{\tilde\kappa}^{\ssf2})^{-\frac{\sigma}2}\,
\varphi_\lambda(r)\;e^{\,i\,t\ssf\sqrt{\lambda^2+\ssf\kappa^2}}
\end{aligned}\end{equation*}
using smooth even cut--off functions
\ssf$\chi_0$ and \ssf$\chi_\infty$ on \ssf$\R$
\ssf such that
\begin{equation*}
\chi_0(\lambda)+\chi_\infty(\lambda)=1
\quad\text{and}\quad\begin{cases}
\,\chi_0(\lambda)\!=\!1
&\forall\;|\lambda|\!\le\!\kappa\ssf,\\
\,\chi_\infty(\lambda)\!=\!1
&\forall\;|\lambda|\!\ge\!\kappa\!+\!1\ssf.\\
\end{cases}\end{equation*}
We shall first estimate \ssf$w_{\,t}^{\sigma,0}$
and next a variant of \ssf$w_{\,t}^{\sigma,\infty}$.
The kernel \ssf$w_{\,t}^{\sigma,\infty}$ has indeed
a logarithmic singularity on the sphere \ssf$r\!=\ssb t$
\ssf when \ssf$\sigma\ssb=\ssb\frac{n+1}2$.
We bypass this problem
by considering the analytic family of operators
\begin{equation*}\textstyle
\widetilde{W}_{\,t}^{\ssf\sigma,\infty}
=\ssf\frac{\vphantom{|}e^{\ssf\sigma^2}}{\Gamma(\frac{n+1}2-\sigma)}\;
\chi_\infty(D)\,\widetilde{D}^{-\sigma}\,e^{\,i\,t\ssf D}
\end{equation*}
in the vertical strip \ssf$0\!\le\!\Re\sigma\!\le\!\frac{n+1}2$
\ssf and the corresponding kernels
\begin{equation}\label{ISFT}\textstyle
\widetilde{w}_{\,t}^{\sigma,\infty}(r)
=\ssf\const\frac{\vphantom{|}e^{\ssf\sigma^2}}{\Gamma(\frac{n+1}2-\sigma)}\,
{\displaystyle\int_{-\infty}^{+\infty}}\hspace{-1mm}
d\lambda\,\chi_\infty(\lambda)\,|\ssf\bc(\lambda)|^{-2}\,
(\lambda^2\!+\ssb\tilde{\kappa}^2)^{-\frac{\sigma}2}\,
\varphi_\lambda(r)\,e^{\,i\,t\ssf\sqrt{\lambda^2+\ssf\kappa^2}}\,.
\end{equation}
Notice that the Gamma function
(which occurs naturally in the theory of Riesz distributions)
will allow us to deal with the boundary point \ssf$\sigma\!=\!\frac{n+1}2$,
while the exponential function
yields boundedness at infinity in the vertical strip.

\subsection{Estimate of
\,$w_{\ssf t}^{\ssf0}\ssb=\ssb w_{\,t}^{\sigma,0}$\ssf.}
\label{KernelEstimatewt0}

\begin{theorem}\label{Estimatewt0}
Let \,$\sigma\!\in\!\R$\ssf.
The following pointwise estimates hold for the kernel \,$w_{\ssf t}^{\ssf0}\ssb:$
\begin{itemize}
\item[(i)]
For every \,$t\!\in\!\R$ and \,$r\!\ge\!0$\ssf, we have
\begin{equation*}
|\ssf w_{\ssf t}^{\ssf0}(r)|\ssf\lesssim\ssf\varphi_0(r)\ssf.
\end{equation*}
\item[(ii)]
Assume that \,$|t|\ssb\ge\ssb2$\ssf.
Then, for every \,$0\ssb\le\ssb r\ssb\le\ssb\frac{|t|}2$\ssf, we have
\begin{equation*}
|\ssf w_{\ssf t}^{\ssf0}(r)|\ssf
\lesssim\ssf|t|^{-\frac32}\,(1\!+\!r)\,\varphi_0(r)\ssf.
\end{equation*}
\end{itemize}\end{theorem}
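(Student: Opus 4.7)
The plan is to treat the two parts separately: a direct pointwise domination for (i), and a one--dimensional stationary phase analysis for (ii). For part (i), I would bound $|\varphi_\lambda(r)|\!\le\!\varphi_0(r)$ via \eqref{phi0}, pull $\varphi_0(r)$ out of the defining integral of $w_t^0$, and observe that the remaining integrand $\chi_0(\lambda)\,|\ssf\bc(\lambda)|^{-2}\,(\lambda^2\!+\ssb\tilde\kappa^2)^{-\sigma/2}$ is absolutely integrable on $\R$: the cut--off $\chi_0$ is compactly supported, $(\lambda^2\!+\!\tilde\kappa^2)^{-\sigma/2}$ remains bounded on its support since $\tilde\kappa\!>\!0$, and $|\ssf\bc(\lambda)|^{-2}\!\lesssim\!\lambda^2$ near the origin by \eqref{estimatec}, so no singularity arises. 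The resulting bound holds uniformly in $t\!\in\!\R$.

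For part (ii), I would insert the Harish--Chandra formula \eqref{intrepr} and exchange integrations to obtain
\begin{equation*}
w_t^0(r)=\const\int_K dk\,e^{-\rho\,H(a_{-r}k)}\ssf I(t,H(a_{-r}k))\ssf,\quad
I(t,s)=\!\int_{-\infty}^{+\infty}\!\!a(\lambda)\,e^{\,i\,t\,\psi_s(\lambda)}d\lambda\ssf,
\end{equation*}
with amplitude $a(\lambda)\ssb=\ssb\chi_0(\lambda)|\ssf\bc(\lambda)|^{-2}(\lambda^2\!+\!\tilde\kappa^2)^{-\sigma/2}$ and phase $\psi_s(\lambda)\ssb=\ssb\sqrt{\lambda^2\!+\!\kappa^2}\!-\!(s/t)\lambda$. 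The crux of the proof is the uniform estimate $|I(t,s)|\ssb\lesssim\ssb|t|^{-3/2}(1\!+\!|s|)$ on the range $|t|\!\ge\!2$, $|s|\!\le\!|t|/2$. Granting it, the elementary bound $|H(a_{-r}k)|\!\le\!r$ together with the identity $\varphi_0(r)=\int_K e^{-\rho\,H(a_{-r}k)}dk$ yields the announced estimate by integrating over $K$.

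To establish the bound on $I(t,s)$, I would invoke the standard one--dimensional stationary phase lemma. Under $|s/t|\!\le\!1/2$, $\psi_s$ has a unique stationary point $\lambda_0=s\kappa/\sqrt{t^2\!-\!s^2}$ with $|\lambda_0|\lesssim|s|/|t|\le\kappa/\sqrt3$, which lies strictly inside the support of $\chi_0$; moreover $\psi_s''(\lambda_0)=\kappa^2(\lambda_0^2\!+\!\kappa^2)^{-3/2}$ and all higher derivatives of $a$ and $\psi_s$ on this support are controlled uniformly in $s$. Stationary phase then yields
\begin{equation*}
I(t,s)=a(\lambda_0)\sqrt{\tfrac{2\pi}{|t|\,\psi_s''(\lambda_0)}}\,e^{\,i\,t\ssf\psi_s(\lambda_0)\,\pm\,i\pi/4}+O(|t|^{-3/2})
\end{equation*}
with an implicit constant uniform in $s$. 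Since $|\ssf\bc(\lambda)|^{-2}=O(\lambda^2)$ near zero, $|a(\lambda_0)|\lesssim\lambda_0^2\lesssim s^2/t^2$, so the principal term is $\lesssim s^2/|t|^{5/2}\le(1\!+\!|s|)/|t|^{3/2}$ on our range, matching the error. The main obstacle is precisely this step: a naive application of stationary phase would deliver only $|t|^{-1/2}$ decay, and the upgrade to $|t|^{-3/2}$ comes from combining the quadratic vanishing of the Plancherel density at the origin with the smallness $|\lambda_0|\lesssim|s|/|t|$ of the stationary point itself, which together render the amplitude at $\lambda_0$ tiny. This is the same mechanism exploited in \cite{AP} for the Schr\"odinger kernel at large times, transposed here to the half wave kernel.
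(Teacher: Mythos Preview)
Your proposal is correct and follows the same strategy as the paper's proof: for (i) the direct bound via \eqref{phi0} and \eqref{estimatec}, and for (ii) the reduction via the Harish--Chandra formula \eqref{intrepr} to a one--dimensional oscillatory integral, followed by stationary phase exploiting the quadratic vanishing $a(\lambda)=O(\lambda^2)$ of the Plancherel density at the origin. The only cosmetic difference is that the paper carries out the stationary phase estimate by hand in its Lemma~A.1 (Morse change of variables, Taylor expansion of the amplitude, explicit Gaussian integrals) rather than citing it as a standard lemma, but the mechanism you identify---smallness of the stationary point $|\lambda_0|\lesssim|s|/|t|$ combined with $|a(\lambda_0)|\lesssim\lambda_0^2$---is precisely the one used there.
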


\begin{proof}
Recall that
\begin{equation}\label{wt0}
w_{\ssf t}^{\ssf0}(r)=\ssf\const\int_{-\kappa-1}^{\ssf\kappa+1}\!d\lambda\,
\chi_0(\lambda)\,|\ssf\bc(\lambda)|^{-2}\,
(\lambda^2\!+\ssb\tilde{\kappa}^2)^{-\frac{\sigma}2}\,
\varphi_\lambda(r)\;e^{\,i\,t\ssf\sqrt{\lambda^2+\ssf\kappa^2}}\,.
\end{equation}
By symmetry we may assume that \ssf$t\!>\!0$\ssf.

\noindent
(i) It follows from the estimates \eqref{phi0} and \eqref{estimatec} that
\begin{equation*}
|\ssf w_{\ssf t}^{\ssf0}(r)|\,
\lesssim\int_{-\kappa-1}^{\ssf\kappa+1}\hspace{-1mm}
d\lambda\,\lambda^2\,\varphi_0(r)\,
\lesssim\,\varphi_0(r)\,.
\end{equation*}
We prove (ii) by substituting in \eqref{wt0}
the first integral representation of $\varphi_\lambda$ in \eqref{intrepr}
and by reducing in this way to Fourier analysis on \ssf$\R$\ssf.
Specifically,
\begin{equation*}
w_{\ssf t}^{\ssf0}(r)\ssf=\int_{\ssf K}dk\,e^{-\rho\,\text{H}(a_{-r}k)}
\int_{-\infty}^{+\infty}\!d\lambda\;a(\lambda)\,
e^{\,i\,t\ssf\bigl(\sqrt{\lambda^2+\ssf\kappa^2\ssf}\ssf
-\ssf\frac{\text{H}(a_{-r}k)\ssf\lambda}t\ssf\bigr)}\,,
\end{equation*}
where \ssf$a(\lambda)\ssb
=\const\chi_0(\lambda)\,|\ssf\bc(\lambda)|^{-2}\,
(\lambda^2\!+\ssb\tilde{\kappa}^2)^{-\frac{\sigma}2}$.
Since 
\begin{equation*}
\int_{\ssf K}dk\;e^{-\rho\,\text{H}(a_{-r}k)}\ssf=\,\varphi_0(r)
\end{equation*}
and \ssf$|\ssf\text{H}(a_{-r}k)|\ssb\le\ssb r$\ssf,
it remains for us to estimate the oscillatory integral
\begin{equation*}
I(t,x)\ssf=\int_{-\infty}^{+\infty}\!d\lambda\;a(\lambda)\,
e^{\,i\,t\ssf\bigl(\sqrt{\lambda^2+\ssf\kappa^2\ssf}\ssf
-\ssf\frac{x\ssf\lambda}t\ssf\bigr)}
\end{equation*}
by \ssf$|t|^{-\frac32}\ssf(1\!+\!|x|)$\ssf.
This is obtained by the method of stationary phase.
More precisely, we apply Lemma A.1 in Appendix A,
whose assumption \eqref{assumptionA1} is fulfilled,
according to \eqref{estimatec}.
\end{proof} 

\subsection{Estimate of
\,$\widetilde{w}_{\,t}^{\ssf\infty}\!
=\ssb\widetilde{w}_{\,t}^{\sigma,\infty}$.}
\label{KernelEstimatewtildetinfty}

\begin{theorem}\label{Estimatewtildetinfty}
The following pointwise estimates hold for the kernel
\,$\widetilde{w}_{\,t}^{\ssf\infty}$,
uniformy in \ssf$\sigma\!\in\!\C$
with \ssf$\Re\sigma\ssb=\ssb\frac{n+1}2:$
\begin{itemize}
\item[(i)]
Assume that \,$|t|\!\ge\!2$\ssf.
Then, for every \,$r\!\ge\!0$\ssf, we have
\begin{equation*}\label{wtildeinftytlarge}
|\ssf\widetilde{w}_{\,t}^{\ssf\infty}(r)|\ssf
\lesssim\ssf|t|^{-\infty}\,.
\end{equation*}
\item[(ii)]
Assume that \,$0\!<\!|t|\!\le\!2$\ssf.
\begin{itemize}
\item[(a)]
\,If \,$r\!\ge\!3$\ssf, then
\,$\widetilde{w}_{\,t}^{\ssf\infty}(r)\ssb
=\text{\rm O}\ssf(r^{-1}\ssf e^{-\rho\ssf r}\ssf)$\ssf.
$\vphantom{\Big|}$
\item[(b)]
If \,$0\!\le\!r\!\le\!3$\ssf, then
\,$|\ssf\widetilde{w}_{\,t}^{\ssf\infty}(r)|
\lesssim\ssf\begin{cases}
\,|t|^{-\frac{n-1}2}
&\text{if \;}n\ssb\ge\ssb3\ssf,\\
\,|t|^{-\frac12}\ssf(\ssf1\!-\ssb\log|t|\ssf)
&\text{if \;}n\ssb=\ssb2\ssf.\\
\end{cases}$
\end{itemize}
\end{itemize} 
\end{theorem}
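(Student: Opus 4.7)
The plan is to split the analysis according to the relative size of $r$ and $|t|$, reducing the spherical Fourier inversion integral \eqref{ISFT} in each regime to a one-dimensional oscillatory integral treated by integration by parts (when the phase has no critical point) or by stationary phase (when it does). Throughout, the prefactor $e^{\ssf\sigma^2}/\Gamma(\frac{n+1}{2}-\sigma)$ stays bounded on the line $\Re\sigma\!=\!\frac{n+1}{2}$ by Stirling's asymptotics, and moreover absorbs any polynomial growth in $|\Im\sigma|$ coming from differentiating $(\lambda^2\!+\ssb\tilde\kappa^2)^{-\sigma/2}$ in $\lambda$, so every estimate below will be automatically uniform in $\sigma$.

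For case (i), the derivative of the phase $t\sqrt{\lambda^2\!+\!\kappa^2}$ is of size $\gtrsim\!|t|$ on $\supp\chi_\infty$. When $r$ stays bounded, the $\lambda$--derivatives of $\varphi_\lambda(r)$ are controlled uniformly through the integral representation \eqref{intrepr}, so iterated integration by parts yields $|t|^{-N}$ for every $N$. For unbounded $r$, I would first insert the Harish--Chandra expansion \eqref{HCexpansion}--\eqref{Philambda} and symmetrize $\lambda\!\to\!-\lambda$ to rewrite
\begin{equation*}
\widetilde{w}_t^{\ssf\infty}(r)
\,=\,\const\,\frac{e^{\ssf\sigma^2}}{\Gamma(\frac{n+1}{2}-\sigma)}\,
(2\sinh r)^{-\rho}\sum_{k\ge0}e^{-2kr}\!\int_{-\infty}^{+\infty}\!d\lambda\;\chi_\infty(\lambda)\,\bc(-\lambda)^{-1}\Gamma_k(\lambda)\,(\lambda^2\!+\ssb\tilde\kappa^2)^{-\sigma/2}\,e^{\ssf i(\lambda r+t\sqrt{\lambda^2+\kappa^2})}\ssf.
\end{equation*}
If $r\!\le\!|t|/2$, the full phase derivative $r+t\lambda/\sqrt{\lambda^2+\kappa^2}$ is still $\gtrsim\!|t|$ on $\supp\chi_\infty$ and integration by parts again yields $|t|^{-N}$; if $r\!\ge\!|t|/2$, the factor $(\sinh r)^{-\rho}\!\lesssim\!e^{-\rho|t|/2}$ is itself rapidly decaying in $|t|$.

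For case (ii)(a), the same Harish--Chandra decomposition applies, and now $|t|\!\le\!2\!\le\!r-1$ forces the phase derivative to be bounded below by $r-|t|\!\gtrsim\!r$. A single integration by parts in $\lambda$, with the amplitude derivative controlled by Lemma~2.1 (in particular \eqref{derGammakappa}) and by \eqref{estimatec}, produces each $k$-th term of the series of size $O((1+k^\nu)/r)$. The factor $e^{-2kr}$ makes the series absolutely convergent for $r\!\ge\!3$, and multiplying by $(\sinh r)^{-\rho}\!\lesssim\!e^{-\rho r}$ yields the announced $O(r^{-1}e^{-\rho r})$.

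Case (ii)(b) is the main difficulty, since the Harish--Chandra expansion breaks down near $r\!=\!0$. I would instead substitute the integral representation \eqref{intrepr} of $\varphi_\lambda(r)$ into \eqref{ISFT} and reduce the problem to estimating
\begin{equation*}
I(t,x) \,=\int_{-\infty}^{+\infty}\!d\lambda\,\chi_\infty(\lambda)\,|\bc(\lambda)|^{-2}(\lambda^2\!+\ssb\tilde\kappa^2)^{-\sigma/2}\,e^{\ssf i(t\sqrt{\lambda^2+\kappa^2}-x\lambda)}
\end{equation*}
uniformly for $|x|\!\le\!r\!\le\!3$ and $0\!<\!|t|\!\le\!2$. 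The phase $\Psi(\lambda)\!=\!t\sqrt{\lambda^2+\kappa^2}-x\lambda$ admits a unique non-degenerate critical point $\lambda_0\!=\!x\kappa/\sqrt{t^2-x^2}$ precisely when $|x|\!<\!|t|$, with $|\Psi''(\lambda_0)|\!\sim\!(t^2-x^2)^{3/2}/t^2$, while on $\Re\sigma\!=\!\frac{n+1}{2}$ the amplitude behaves like $|\lambda|^{(n-3)/2}$ at infinity. The stationary phase formula (in the form of Lemma~A.1) contributes near $\lambda_0$ a quantity of order $|\lambda_0|^{(n-3)/2}|\Psi''(\lambda_0)|^{-1/2}$, and after integrating against $\int_K dk\,e^{-\rho\,\text{H}(a_{-r}k)}$ (which reduces to integration in $x$ on $[-r,r]$) one obtains the Euclidean dispersive bound $|t|^{-(n-1)/2}$ for $n\!\ge\!3$; in dimension two the borderline exponent $(n-3)/2\!=\!-1/2$ produces an additional logarithmic contribution that accounts exactly for the factor $1\!-\!\log|t|$. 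The principal obstacle is that, on $\Re\sigma\!=\!\frac{n+1}{2}$, the amplitude fails to be absolutely integrable (indeed it grows for $n\!\ge\!4$), so the stationary-phase analysis must first be carried out for $\Re\sigma\!<\!\frac{n+1}{2}$ — where Lemma~A.1 applies directly — and then extended to the boundary line by analytic continuation, which is exactly the role of the Gamma prefactor and of the whole analytic family $\widetilde{W}_t^{\sigma,\infty}$.
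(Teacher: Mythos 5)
Your treatment of parts (i) and (ii)(a) follows the same broad strategy as the paper: for $r$ small relative to $|t|$ use the integral representation \eqref{intrepr} and non-stationary phase, and for $r$ large insert the Harish--Chandra expansion \eqref{HCexpansion} and integrate by parts. The symmetrization you propose is legitimate and produces the same amplitudes as the paper's $a_k^\pm$. One caveat on (i): you dispose of the regime $r\ge|t|/2$ by appealing only to the decay $(\sinh r)^{-\rho}\lesssim e^{-\rho|t|/2}$, but the oscillatory integrals $I_k^{\pm,\infty}$ must still be shown to grow at worst like $\text{O}(t)$ near the light cone $r\approx t$, where the phase of $I_0^{-,\infty}$ is nearly stationary; this is exactly what the paper does by writing $e^{\ssf i\ssf t\ssf\psi(\lambda)}=1+\text{O}(t\ssf\psi(\lambda))$ and invoking the estimate \eqref{estimate3integralI0m} from \cite{APV1}. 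Without such a bound the exponential prefactor alone does not close the argument. Your (ii)(a) is fine, since $r\ge3\ge t+1$ keeps the phase derivative bounded below by a positive multiple of $r$ on the whole support.

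The genuine gap is in (ii)(b). You substitute \eqref{intrepr} and invoke Lemma~A.1, but that lemma requires a compactly supported amplitude and produces the bound $(1+|x|)/(1+|t|)^{3/2}$ \emph{only} in the regime $|x|\le|t|/2$ — which is trivial for $|t|\le2$ and excludes most of the range $|x|\le r\le3$ relevant here. Near the light cone $|x|=|t|$ the critical point $\lambda_0=x\kappa/\sqrt{t^2-x^2}$ escapes to infinity and $\Psi''(\lambda_0)\to0$, so the naive stationary-phase bound blows up in $x$; a uniform pointwise conclusion would require genuinely exploiting cancellation in the $K$--integral (the Abel transform), which your proposal only alludes to. Finally, the suggested repair — prove the estimate for $\Re\sigma<\frac{n+1}2$, where the amplitude is integrable, then ``extend to the boundary line by analytic continuation'' — is not a valid step for the \emph{pointwise} bound asserted in Theorem~\ref{Estimatewtildetinfty}. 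Stein's analytic interpolation carries \emph{operator norm} bounds between vertical lines and is used precisely that way in Section~\ref{Dispersive}, but there is no analogous principle that continues a pointwise kernel estimate from the open strip to its boundary. The Gamma factor in \eqref{AnalyticFamily} serves only to make the estimate \emph{uniform in $\Im\sigma$ along} the line $\Re\sigma=\frac{n+1}2$; the estimate still has to be proved directly there.

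For comparison, the paper's route for (ii)(b) avoids all of this by a different algebraic decomposition: one factors $e^{\ssf i\ssf t\sqrt{\lambda^2+\kappa^2}}=e^{\ssf i\ssf t\ssf\psi(\lambda)}\,e^{\ssf i\ssf t\ssf\lambda}$ with $\psi(\lambda)=\sqrt{\lambda^2+\kappa^2}-\lambda$ a symbol of order $-1$, splits $e^{\ssf i\ssf t\ssf\psi}=1+(e^{\ssf i\ssf t\ssf\psi}-1)$, and thereby reduces the leading term to $\int a(\lambda)\,e^{\ssf i\ssf t\ssf\lambda}\,\varphi_\lambda(r)\,d\lambda$ with $a$ a symbol of order $\frac{n-3}2$ — an oscillatory integral already estimated in \cite[Appendix~C]{APV1} — while the remainder carries a symbol of order $\frac{n-5}2$ and is easier. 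This sidesteps both the non-integrability of the amplitude and the light-cone degeneracy of the stationary point.
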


By symmetry we may assume again \ssf$t\!>\!0$
\ssf throughout the proof of Theorem \ref{Estimatewtildetinfty}.
\medskip

\noindent
\textit{Proof of Theorem \ref{Estimatewtildetinfty}.i\/}.
By evenness we have
\begin{equation}\label{wtildetinfty}\textstyle
\widetilde{w}_{\,t}^{\ssf\infty}(r)
=2\ssf\const\frac{\vphantom{|}e^{\ssf\sigma^2}}{\Gamma(\frac{n+1}2-\sigma)}\,
{\displaystyle\int_{\,0}^{+\infty}}\!d\lambda\,
\chi_\infty(\lambda)\,|\ssf\bc(\lambda)|^{-2}\,
(\lambda^2\!+\ssb\tilde{\kappa}^2)^{-\frac{\sigma}2}\,
\varphi_\lambda(r)\,
e^{\,i\,t\ssf\sqrt{\lambda^2+\ssf\kappa^2}}\,.
\end{equation}
If \ssf$0\!\le\!r\!\le\!\frac t2$\ssf,
we resume the proof of Theorem \ref{Estimatewt0}.ii,
using Lemma A.2 instead of Lemma A.1,
and conclude that
\begin{equation}\label{estimate1wtildetinfty}
|\ssf\widetilde{w}_{\,t}^{\ssf\infty}(r)|\ssf
\lesssim\,t^{-\infty}\,\varphi_0(r)\ssf.
\end{equation}
If \ssf$r\!\ge\!\frac t2$\ssf,
we  substitute in \eqref{wtildetinfty}
the Harish--Chandra expansion (\ref{HCexpansion}) of $\varphi_\lambda(r)$
and reduce this way again to Fourier analysis on \ssf$\R$\ssf.
Specifically, our task consists in estimating the expansion
\begin{equation}\label{expansionwtildetinfty}\textstyle
\widetilde{w}_{\,t}^{\ssf\infty}(r)=
(\sinh r)^{-\rho}\;
{\displaystyle\sum\nolimits_{\ssf k=0}^{+\infty}}\,
e^{-2\ssf k\ssf r}\ssf
\bigl\{\ssf I_{\,k}^{+,\infty}(t,r)+I_{\,k}^{-,\infty}(t,r)\ssf\bigr\}
\end{equation}
involving oscillatory integrals
\begin{equation*}
I_{\,k}^{\pm,\infty}(t,r)\ssf
=\int_{\,0}^{+\infty}\hspace{-1mm}d\lambda\;
a_{\ssf k}^\pm(\lambda)\;
e^{\,i\ssf(\ssf t\ssf\sqrt{\lambda^2+\ssf\kappa^2\ssf}\ssf
\pm\,r\ssf\lambda\ssf)}
\end{equation*}
with amplitudes
\begin{equation*}\textstyle
a_{\ssf k}^\pm(\lambda)=\ssf2\ssf\const\ssf
\frac{\vphantom{|}e^{\sigma^2}}{\Gamma(\frac{n+1}2-\sigma)}\;
\chi_\infty(\lambda)\;\bc(\mp\lambda)^{-1}\,
(\lambda^2\!+\ssb\tilde{\kappa}^2)^{-\frac{\sigma}2}\,
\Gamma_k(\pm\lambda)\,.
\end{equation*}
Notice that \ssf$a_{\ssf k}^\pm(\lambda)$ \ssf is a symbol of order
\begin{equation*}
d\ssf=\begin{cases}
-1
&\text{if \,}k\!=\!0\ssf,\\
-2
&\text{if \,}k\!\in\!\N^*,
\end{cases}\end{equation*}
uniformly in \ssf$\sigma\!\in\!\C$ \ssf
with \ssf$\Re\sigma\ssb=\ssb\frac{n+1}2$\ssf.
This follows indeed
from the expression \eqref{cfunction} of the \ssf$\bc$--function
and from the estimate \eqref{derGammakappa}
of the coefficients \ssf$\Gamma_k$\ssf.
Consequently the integrals
\begin{equation}\label{estimateIkpm}
I_{\,k}^{\pm,\infty}(t,r)=\text{O}\ssf(k^{\ssf\nu})
\end{equation}
are easy to estimate when \ssf$k\!>\!0$\ssf,
\ssf while \ssf$I_{\,0}^{+,\infty}(t,r)$
and especially \ssf$I_{\,0}^{-,\infty}(t,r)$
require more work.
As far as it is concerned,
we integrate by parts
\begin{equation*}
I_{\,0}^{+,\infty}(t,r)\,
=\int_{\,0}^{+\infty}\hspace{-1mm}d\lambda\,
a_{\ssf 0}^+(\lambda)\,e^{\ssf i\ssf t\ssf\phi(\lambda)}\ssf,
\end{equation*}
using \ssf$e^{\ssf i\ssf t\ssf\phi(\lambda)}\!
=\ssb\frac1{i\ssf t\ssf\phi'\ssb(\lambda)}\ssf
\frac\partial{\partial\lambda}\ssf
e^{\ssf i\ssf t\ssf\phi(\lambda)}$
and the following properties of
\ssf$\phi(\lambda)\ssb
=\ssb\sqrt{\lambda^2\!+\ssb\kappa^2\ssf}\ssb
+\ssb\frac rt\ssf\lambda$\;:
\begin{itemize}
\item[$\bullet$]
$\,\phi'(\lambda)\ssb
=\ssb\frac{\lambda}{\sqrt{\lambda^2+\kappa^2}}\ssb+\ssb\frac rt\ssb
\ge\ssb\frac rt\ssb\ge\ssb\frac12$\,,
\item[$\bullet$]
$\,\phi''(\lambda)\ssb=\ssb\kappa^2\ssf(\lambda^2\!+\ssb\kappa^2)^{-\frac32}$
\,is a symbol of order $-\ssf3$\ssf.
\end{itemize}
We obtain this way
\begin{equation}\label{estimateI0p}
I_{\,0}^{+,\infty}(t,r)=\text{O}\ssf(\ssf r^{-1})
\end{equation}
and actually
\begin{equation*}
I_{\,0}^{+,\infty}(t,r)=\text{O}\ssf(\ssf r^{-\infty})
\end{equation*}
by repeated integrations by parts.
Let us turn to the last integral, that we rewrite as follows\,:
\begin{equation*}
I_{\,0}^{-,\infty}(t,r)\ssf
=\int_{\,0}^{+\infty}\hspace{-1mm}d\lambda\;a_{\ssf 0}^-(\lambda)\,
e^{\,i\ssf t\ssf(\sqrt{\lambda^2+\ssf\kappa^2\ssf}-\ssf\lambda)}\,
e^{\ssf i\ssf(t-r)\ssf\lambda}\,.
\end{equation*}
After performing an integration by parts based on
\ssf$e^{\ssf i\ssf(t-r)\ssf\lambda}\ssb
=\ssb\frac1{i\ssf(t-r)}\ssf\frac\partial{\partial\lambda}\ssf
e^{\ssf i\ssf(t-r)\ssf\lambda}$
\ssf and by using the fact that
\begin{equation}\label{psi}\textstyle
\psi(\lambda)=\sqrt{\lambda^2\!+\ssb\kappa^2\ssf}\ssb-\ssb\lambda
=\frac{\kappa^2}{\sqrt{\lambda^2+\ssf\kappa^2\ssf}\ssf+\,\lambda}
\end{equation}
is a symbol of order $-1$\ssf,
we obtain
\begin{equation}\label{estimate1integralI0m}\textstyle
I_{\,0}^{-,\infty}(t,r)
=\text{O}\ssf\bigl(\frac t{|\ssf r\ssf-\ssf t\ssf|}\bigr)\ssf.
\end{equation}
This estimate is enough for our purpose,
as long as \ssf$r$ stays away from $t$\ssf.
If \ssf$|\ssf r\!-\!t\ssf|\!\le\!1$\ssf,
let us split up
\begin{equation*}
e^{\ssf i\ssf t\ssf\psi(\lambda)}=1+\text{O}\ssf(\ssf t\,\psi(\lambda))
\end{equation*}
and
\begin{equation}\label{estimate2integralI0m}
I_{\,0}^{-,\infty}(t,r)\,
=\int_{\,0}^{+\infty}\hspace{-1mm}d\lambda\;
a_{\ssf 0}^-(\lambda)\;e^{\ssf i\ssf(t-r)\ssf\lambda}\,
+\;\text{O}\ssf(t)
\end{equation}
accordingly.
The remaining integral was estimated in \cite{APV1},
more precisely at the end of the proof Theorem 4.2.ii\,:
\begin{equation}\label{estimate3integralI0m}\textstyle
{\displaystyle\int_{\,0}^{+\infty}}\hspace{-1mm}d\lambda\;
a_{\ssf 0}^-(\lambda)\;e^{\ssf i\ssf(t-r)\ssf\lambda}\,
=\,\text{O}\ssf(1)\ssf.
\end{equation}
By combining the estimates
\eqref{estimateIkpm},
\eqref{estimateI0p},
\eqref{estimate1integralI0m},
\eqref{estimate2integralI0m},
\eqref{estimate3integralI0m},
we deduce from \eqref{expansionwtildetinfty} that
\begin{equation*}\textstyle
|\ssf\widetilde{w}_{\,t}^{\ssf\infty}(r)|\,
\lesssim\,e^{-\rho\ssf r}\,t\,\lesssim\,t^{-\infty}
\qquad\forall\;r\ssb\ge\ssb\frac t2\ssb\ge\ssb1\ssf,
\end{equation*}
uniformly in \ssf$\sigma\!\in\!\C$ \ssf with \ssf$\Re\sigma\!=\!\frac{n+1}2$\ssf.
This concludes the proof of Theorem \ref{Estimatewtildetinfty}.i.
\hfill$\square$
\medskip

Let us turn to the small time estimates in Theorem \ref{Estimatewtildetinfty}.
\medskip

\noindent
\textit{Proof of Theorem \ref{Estimatewtildetinfty}.ii.a\/}.
Since \ssf$0\!<\!t\!\le2$ \ssf and \ssf$r\!\ge\!3$\ssf,
we can resume the proof of Theorem \ref{Estimatewtildetinfty}.i
in the case \ssf$r\!\ge\ssb t\ssb+\!1\!\ge\!\frac t2$\ssf.
By using the expansion \eqref{expansionwtildetinfty}
and the estimates
\eqref{estimateIkpm},
\eqref{estimateI0p},
\eqref{estimate1integralI0m},
we obtain
\begin{equation*}
|\ssf\widetilde{w}_{\,t}^{\ssf\infty}(r)|\,\lesssim\,r^{-1}\,e^{-\rho\ssf r}\,,
\end{equation*}
uniformly in \ssf$\sigma\!\in\!\C$ \ssf with \ssf$\Re\sigma\!=\!\frac{n+1}2$\ssf.
This concludes the proof of Theorem \ref{Estimatewtildetinfty}.ii.a.
\hfill$\square$
\medskip

\noindent
\textit{Proof of Theorem \ref{Estimatewtildetinfty}.ii.b\/}.
Let us rewrite and expand \eqref{wtildetinfty} as follows\,:
\begin{align}
\widetilde{w}_{\,t}^{\ssf\infty}(r)
&\textstyle=\,2\ssf\const\ssf
\frac{\vphantom{|}e^{\ssf\sigma^2}}{\Gamma(\frac{n+1}2-\sigma)}\,
{\displaystyle\int_{\,0}^{+\infty}}\!d\lambda\,
\chi_\infty(\lambda)\,|\ssf\bc(\lambda)|^{-2}\,
(\lambda^2\!+\ssb\tilde{\kappa}^2)^{-\frac{\sigma}2}\,
e^{\,i\,t\,\psi(\lambda)}\,e^{\,i\,t\ssf\lambda}\,
\varphi_\lambda(r)\\
&\textstyle\label{LocalKernelEstimate}
={\displaystyle\int_{\,0}^{+\infty}}\!d\lambda\;
a(\lambda)\,e^{\,i\,t\,\lambda}\,\varphi_\lambda(r)\,
+{\displaystyle\int_{\,0}^{+\infty}}\!d\lambda\;
b(\lambda)\,e^{\,i\,t\,\lambda}\,\varphi_\lambda(r)\,,
\end{align}
where \ssf$\psi$ \ssf is given by \eqref{psi},
\begin{equation*}\textstyle
a(\lambda)=\ssf2\ssf\const\ssf
\frac{\vphantom{|}e^{\ssf\sigma^2}}{\Gamma(\frac{n+1}2-\sigma)}\;
\chi_\infty(\lambda)\,|\ssf\bc(\lambda)|^{-2}\,
(\lambda^2\!+\ssb\tilde{\kappa}^2)^{-\frac{\sigma}2}
\end{equation*}
is a symbol of order \ssf$\frac{n-3}2$\ssf,
uniformly in \ssf$\sigma\!\in\!\C$ \ssf
with \ssf$\Re\sigma\!=\ssb\frac{n+1}2$\ssf,
and
\begin{equation*}\textstyle
b(\lambda)=\ssf2\ssf\const\ssf
\frac{\vphantom{|}e^{\ssf\sigma^2}}{\Gamma(\frac{n+1}2-\sigma)}\;
\chi_\infty(\lambda)\,|\ssf\bc(\lambda)|^{-2}\,
(\lambda^2\!+\ssb\tilde{\kappa}^2)^{-\frac{\sigma}2}\,
\bigl\{\ssf e^{\,i\,t\,\psi(\lambda)}\!-\!1\ssf\bigr\}
\end{equation*}
is a symbol of \ssf$\frac{n-5}2$\ssf,
uniformly in \ssf$0\!<\!t\!\le\!2$
\ssf and \ssf$\sigma\!\in\!\C$ \ssf
with \ssf$\Re\sigma\!=\ssb\frac{n+1}2$\ssf.
The first integral in \eqref{LocalKernelEstimate}
was analyzed in \cite[Appendix C]{APV1}
and estimated there by
\begin{equation*}
C\,\begin{cases}
\;t^{-\frac{n-1}2}
&\text{if \;}n\ssb\ge\ssb3\ssf,\\
\;t^{-\frac12}\ssf(\ssf1\!-\ssb\log|t|\ssf)
&\text{if \;}n\ssb=\ssb2\ssf.\\
\end{cases}\end{equation*}
The second integral is easier to estimate,
for instance by \ssf$C\,t^{-\frac{n-2}2}$\ssf.
This concludes the proof of Theorem \ref{Estimatewtildetinfty}.ii.b.
\hfill$\square$

\begin{remark}\label{Hadamard}
As far as local estimates of wave kernels are concerned,
we might have used the \textit{Hadamard parametrix\/} \cite[\S\;17.4]{Ho3}
instead of spherical analysis.
\end{remark}

\begin{remark}\label{GeneralizedKernelEstimates}
The kernel analysis carried out in this section still holds for the operators
\,$D^{-\sigma}\widetilde{D}^{-\tilde{\sigma}}\ssf e^{\,i\,t\ssf D}$,
provided we assume
\,$\Re\sigma\!+\!\Re\widetilde{\sigma}\!=\!\frac{n+1}2$
in Theorem \ref{Estimatewtildetinfty}.
\end{remark}

\section{Dispersive estimates}
\label{Dispersive}

In this section we obtain $L^{q'}\!\to\! L^q$ estimates for the operator
\ssf$W_t^{\ssf\sigma}\!=\ssb\widetilde{D}^{\ssf-\sigma}\,e^{\,i\,t\ssf D}$,
which will be crucial role for our Strichartz estimates in next section.
Let us split up its kernel
\ssf$w_{\ssf t}^{\ssf\sigma}\!
=\ssb w_{\ssf t}^{\sigma,0}\!+\ssb w_{\ssf t}^{\sigma,\infty}$
as before.
We will handle the contribution of \ssf$w_{\ssf t}^{\sigma,0}$,
using the pointwise estimates obtained in Subsection \ref{KernelEstimatewt0}
and the following criterion (see for instance \cite[Theorem 3.4]{APV1})
based on the Kunze-Stein phenomenon.

\begin{lemma}\label{KS}
There exists a constant \,$C\!>\!0$ such that,
for every radial measurable function \,$\kappa$ on \,$\Hn$,
for every \,$2\!\le\!q\!<\!\infty$ and \ssf$f\!\in\!L^{q'}\ssb(\Hn)$,
\begin{equation*}
\|\ssf f\ssb\ast\ssb\kappa\,\|_{L^q\vphantom{L^{q'}}}
\le\,C_q\;\|f\|_{L^{q'}}\,\Bigl\{\ssf\int_{\,0}^{+\infty}\hspace{-1mm}dr\,
(\sinh r)^{n-1}\,|\kappa(r)|^{\frac q2}\,\varphi_0(r)\ssf\Bigr\}^{\frac2q}\,.
\end{equation*}
\end{lemma}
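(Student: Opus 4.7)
The plan is to reduce this to the sharp Kunze-Stein inequality of Cowling for bi-$K$-invariant convolvers on the semisimple group $G=SO(1,n)_0$. Specifically, lifting $f$ to a right-$K$-invariant function and $\kappa$ to a bi-$K$-invariant function on $G$, convolution on $\Hn$ coincides with the group convolution and one has the inequality
\begin{equation*}
\|\ssf f\ast\kappa\ssf\|_{L^q(G)}\le\ssf C_q\,\|f\|_{L^{q'}(G)}\,
\Bigl(\ssf\int_G|\kappa(x)|^{q/2}\,\varphi_0(x)\,dx\Bigr)^{\!2/q}
\end{equation*}
for every $2\le q<\infty$. The statement of the lemma then follows at once by writing the Haar measure on $G$ in polar coordinates on $\Hn=G/K$, which introduces the Jacobian $(\sinh r)^{n-1}$.

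To establish the weighted inequality, I would apply Stein's complex interpolation theorem to the analytic family of operators $T_z\ssf f=f\ast(|\kappa|^{\ssf z}\ssf\operatorname{sgn}\kappa)$ in the strip $0\le\Re z\le 1$. At $\Re z=0$ (corresponding to $q=\infty$) the bound $\|T_{iy}f\|_{L^\infty}\le\|f\|_{L^1}$ is trivial. At $\Re z=1$ (corresponding to $q=2$) one needs
\begin{equation*}
\|\ssf f\ast\eta\ssf\|_{L^2(G)}\le\ssf\Bigl(\ssf\int_G|\eta(x)|\,\varphi_0(x)\,dx\Bigr)\,\|f\|_{L^2(G)}
\end{equation*}
for every bi-$K$-invariant $\eta$, and this in turn is a direct consequence of the spherical Plancherel theorem: the convolution is a spherical Fourier multiplier with symbol $\mathcal{H}\eta(\lambda)=\int_G\eta(x)\,\varphi_\lambda(x)\,dx$, whose supremum in $\lambda\!\in\!\R$ is controlled by $\int_G|\eta(x)|\,\varphi_0(x)\,dx$ thanks to the pointwise bound $|\varphi_\lambda|\le\varphi_0$ recorded in \eqref{phi0}.

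The main obstacle is precisely this $L^2$ endpoint, where both the $\varphi_0$--weight and the Kunze-Stein phenomenon itself take their origin in the rapid exponential decay of $\varphi_0$ at infinity — a feature proper to the non-Euclidean setting that makes the admissible class of convolvers strictly larger than the naive $L^1$--class available on $\Rn$. Once this endpoint is secured, the interpolation argument and the change of variables to polar coordinates are entirely routine, and the conclusion of the lemma follows.
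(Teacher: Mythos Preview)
The paper does not actually prove this lemma; it merely cites \cite[Theorem~3.4]{APV1}. Your sketch is the standard Cowling interpolation argument behind that reference, and the overall strategy---trivial $L^1\!\to\!L^\infty$ endpoint, $L^2\!\to\!L^2$ endpoint via the spherical Plancherel theorem and the bound $|\varphi_\lambda|\le\varphi_0$, then Stein interpolation---is correct.

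There is one small slip in your parameterization. With the family $T_z f=f\ast(|\kappa|^z\operatorname{sgn}\kappa)$, interpolation at $\theta=2/q$ yields a bound for convolution with $|\kappa|^{2/q}\operatorname{sgn}\kappa$, not with $\kappa$ itself, and the resulting constant is $\bigl(\int|\kappa|\,\varphi_0\bigr)^{2/q}$ rather than $\bigl(\int|\kappa|^{q/2}\varphi_0\bigr)^{2/q}$. The fix is routine: either substitute $\kappa\leftrightarrow|\kappa|^{q/2}\operatorname{sgn}\kappa$ at the end, or---more cleanly---define from the outset $T_z f=f\ast(|\kappa|^{qz/2}\operatorname{sgn}\kappa)$, so that $T_{2/q}$ is convolution with $\kappa$ and the $L^2$ endpoint gives exactly the weight $\int|\kappa|^{q/2}\varphi_0$. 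With this adjustment your argument goes through.
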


For the second part \ssf$w_{\ssf t}^{\sigma,\infty}$,
we resume the Euclidean approach,
which consists in interpolating analytically between 
\ssf$L^2\!\to\!L^2$ and \ssf$L^1\!\to\!L^\infty$ estimates
for the family of operators 
\begin{equation}\label{AnalyticFamily}\textstyle
\widetilde{W}_t^{\ssf\sigma,\infty}
=\,\frac{\vphantom{|}e^{\ssf\sigma^2}}{\Gamma(\frac{n+1}2-\sigma)}\;
\chi_\infty(D)\,\widetilde{D}^{-\sigma}\,e^{\,i\,t\ssf D}
\end{equation}
in the vertical strip \ssf$0\ssb\le\ssb\Re\sigma\ssb\le\!\frac{n+1}2$\ssf.

\subsection{Small time dispersive estimates}
 
\begin{theorem}\label{dispersive0}
Assume that
\,$0\ssb<\ssb|t|\ssb\le\ssb2$\ssf,
\ssf$2\ssb<\ssb q\ssb<\ssb\infty$\ssf
and \,$\sigma\ssb\ge\ssb(n\ssb+\!1)\ssf(\frac12\!-\!\frac1q)$\ssf.
Then, 
\begin{equation*}
\bigl\|\ssf\widetilde{D}^{\ssf -\sigma}\ssf e^{\,i\,t\ssf D}
\ssf\bigr\|_{L^{q'}\ssb\to L^q}\lesssim\,\begin{cases}
\,|t|^{-(n-1)(\frac12-\frac1q)}
&\text{if \,}n\ssb\ge\ssb3\ssf,\\
\,|t|^{-(\frac12-\frac1q)}\ssf(\ssf1\!-\ssb\log|t|\ssf)^{1-\frac2q}
&\text{if \,}n\ssb=\ssb2\ssf.\\
\end{cases}
\end{equation*}
\end{theorem}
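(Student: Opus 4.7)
The plan is to split \ssf$W_t^{\ssf\sigma}\ssb=\ssb W_t^{\sigma,0}\!+\ssb W_t^{\sigma,\infty}$ \ssf along the frequency cut--off introduced in Section \ref{Kernel} and to handle each piece by a different mechanism. For the low--frequency part \ssf$W_t^{\sigma,0}$, the pointwise bound \,$|\ssf w_{\ssf t}^{\ssf0}(r)|\ssb\lesssim\ssb\varphi_0(r)$ \ssf from Theorem \ref{Estimatewt0}.(i) combined with the Kunze--Stein criterion (Lemma \ref{KS}) with \ssf$\kappa(r)\!=\!\varphi_0(r)$ \ssf yields a \ssf$t$--independent bound
\begin{equation*}
\bigl\|\,W_t^{\sigma,0}\ssf\bigr\|_{L^{q'}\to L^q}\,
\lesssim\,\Bigl\{\int_{\,0}^{+\infty}\hspace{-1mm}dr\,
(\sinh r)^{n-1}\ssf\varphi_0(r)^{\frac q2+1}\Bigr\}^{\frac2q}\,,
\end{equation*}
which is finite for every \ssf$q\!>\!2$ \ssf thanks to \eqref{phi0}; this \ssf$O(1)$ \ssf bound is absorbed by the target on the range \ssf$0\!<\!|t|\!\le\!2$\ssf.

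For the high--frequency part, I would apply Stein's complex interpolation theorem to the analytic family \ssf$\widetilde{W}_t^{\ssf\sigma,\infty}$ \ssf of \eqref{AnalyticFamily} on the vertical strip \ssf$0\!\le\!\Re\sigma\!\le\!\frac{n+1}2$, at the real value \ssf$\sigma\ssb=\ssb(n\ssb+\!1)\ssf(\frac12\ssb-\ssb\frac1q)$, which lies strictly inside that strip. At the left edge \ssf$\Re\sigma\!=\!0$, the spectral multiplier \ssf$(\lambda^2\ssb+\ssb\tilde\kappa^2)^{-\sigma/2}$ \ssf has modulus \ssf$(\lambda^2\ssb+\ssb\tilde\kappa^2)^{-i\ssf\Im\sigma/2}$ of size \ssf$1$, and \ssf$e^{\ssf i\ssf t\ssf D}$ \ssf is unitary on \ssf$L^2(\Hn)$, so the Plancherel theorem gives the \ssf$L^2\!\to\!L^2$ \ssf bound uniformly in \ssf$t$\ssf, with the admissible vertical decay \ssf$|e^{\ssf\sigma^2}/\Gamma(\frac{n+1}2\ssb-\ssb\sigma)|\!\lesssim\!e^{\ssf(\frac{n+1}2)^2-\ssf(\Im\sigma)^2}$. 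At the right edge \ssf$\Re\sigma\!=\!\frac{n+1}2$, the \ssf$L^1\!\to\!L^\infty$ \ssf norm is the supremum of the convolution kernel \ssf$\widetilde{w}_{\,t}^{\ssf\infty}$, which by Theorem \ref{Estimatewtildetinfty}.(ii.a) is \ssf$O(r^{-1}\ssf e^{-\rho\ssf r})\!=\!O(1)$ \ssf on \ssf$r\!\ge\!3$ \ssf and by Theorem \ref{Estimatewtildetinfty}.(ii.b) is bounded by \ssf$|t|^{-\frac{n-1}2}$ \ssf (resp.~$|t|^{-\frac12}(1\!-\!\log|t|)$ \ssf for \ssf$n\ssb=\ssb2$\ssf) on \ssf$r\!\le\!3$, uniformly in the imaginary part of \ssf$\sigma$ \ssf on that line.

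Complex interpolation then produces
\begin{equation*}
\bigl\|\,\widetilde{W}_t^{\sigma,\infty}\ssf\bigr\|_{L^{q'}\to L^q}\,
\lesssim\,\begin{cases}
\,|t|^{-(n-1)(\frac12-\frac1q)}
&\text{if \;}n\!\ge\!3\ssf,\\
\,|t|^{-(\frac12-\frac1q)}\ssf(\ssf1\!-\!\log|t|\ssf)^{1-\frac2q}
&\text{if \;}n\!=\!2\ssf,\\
\end{cases}
\end{equation*}
at \ssf$\sigma\ssb=\ssb(n\ssb+\!1)\ssf(\frac12\ssb-\ssb\frac1q)$; dividing by the nonzero finite constant \ssf$e^{\ssf\sigma^2}\!/\Gamma(\frac{n+1}2\ssb-\ssb\sigma)$ \ssf restores the estimate for \ssf$W_t^{\sigma,\infty}$, and adding the bound for \ssf$W_t^{\sigma,0}$ \ssf gives the theorem for this particular exponent \ssf$\sigma$\ssf; larger exponents follow from the \ssf$L^2$--\ssf boundedness of \ssf$\widetilde{D}^{-\tau}$ \ssf for \ssf$\tau\!\ge\!0$ \ssf and a composition argument. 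The main obstacle is the Stein interpolation step: both the Gamma--function normalization (needed to tame the logarithmic singularity of \ssf$w_{\ssf t}^{\sigma,\infty}$ \ssf on \ssf$r\!=\!|t|$ \ssf when \ssf$\sigma\!=\!\frac{n+1}2$\ssf) and the \ssf$e^{\ssf\sigma^2}$ \ssf factor (needed for admissible vertical decay) have been built into the definition of \ssf$\widetilde{W}_t^{\sigma,\infty}$ \ssf precisely so that the two boundary bounds above hold uniformly in \ssf$\Im\sigma$, which is exactly what Stein's theorem requires.
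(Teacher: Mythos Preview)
Your proposal is correct and follows essentially the same approach as the paper: the low--frequency part via Kunze--Stein (Lemma \ref{KS}) together with Theorem \ref{Estimatewt0}.(i), and the high--frequency part via Stein analytic interpolation of the family \eqref{AnalyticFamily} between the $L^2\!\to\!L^2$ bound on $\Re\sigma\!=\!0$ and the $L^1\!\to\!L^\infty$ bound on $\Re\sigma\!=\!\frac{n+1}2$ furnished by Theorem \ref{Estimatewtildetinfty}.(ii). One small slip: the reduction to the critical exponent requires $L^q$--boundedness of $\widetilde{D}^{-\tau}$ for $\tau\!\ge\!0$ (which holds on $\Hn$ for all $1\!<\!q\!<\!\infty$), not merely $L^2$--boundedness.
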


\begin{proof}
We divide the proof into two parts,
corresponding to the kernel decomposition
\ssf$w_t\!=\ssb w_{\ssf t}^{\ssf0}\!+\ssb w_{\,t}^\infty$.
By applying Lemma \ref{KS}
and using the pointwise estimates in Theorem \ref{Estimatewt0}.i,
we obtain on one hand
\begin{equation*}
\begin{aligned}
\bigl\|\ssf f\ssb*\ssb w_{\ssf t}^{\ssf0}\ssf\bigr\|_{L^q}
&\lesssim\,\Bigl\{\ssf\int_{\,0}^{+\infty}\hspace{-1mm}dr\,
(\sinh r)^{n-1}\,\varphi_0(r)\,|\ssf w_{\ssf t}^{\ssf0}(r)|^{\frac q2}
\,\Bigr\}^{\frac2q}\;\|f\|_{L^{q'}}\\
&\lesssim\,\Big\{\ssf\int_{\,0}^{+\infty}\hspace{-1mm}dr\,
(1\!+\ssb r)^{\frac q2+1}\,e^{-\ssf(\frac q2-1)\ssf\rho\,r}
\,\Bigr\}^{\frac2q}\;\|f\|_{L^{q'}}\\
&\lesssim\;\|f\|_{L^{q'}}
\qquad\forall\;f\!\in\!L^{q'}.
\vphantom{\int_0^1}
\end{aligned}
\end{equation*}
On the other hand,
in order to estimate the \ssf$L^{q'}\hspace{-1mm}\rightarrow\!L^q$ norm
of \ssf$f\ssb\mapsto\ssb f\ssb*\ssb w_{\,t}^{\ssf\infty}$,
we proceed by interpolation for the analytic family \eqref{AnalyticFamily}.
If \ssf$\Re\sigma\ssb=\ssb0$\ssf, then
\begin{equation*}
\|\ssf f\ssb*\ssb\widetilde{w}_{\,t}^{\ssf\infty}\ssf\|_{L^2}
\lesssim\,\|f\|_{L^2}
\qquad\forall\;f\!\in\!L^2.
\end{equation*}
If \ssf$\Re\sigma\ssb=\ssb\frac{n+1}2$,
we deduce
from the pointwise estimates in Theorem \ref{Estimatewtildetinfty}.ii
that
\begin{equation*}
\|\ssf f\ssb*\ssb\widetilde{w}_{\,t}^{\ssf\infty}\ssf\|_{L^\infty}
\lesssim\,|t|^{-\frac{n-1}2}\,\|f\|_{L^1}
\qquad\forall\;f\!\in\!L^1.
\end{equation*}
By interpolation we conclude
for \ssf$\sigma\ssb=\ssb(n+1)\bigl(\frac12\!-\!\frac1q\bigr)$
\ssf that
\begin{equation*}
\bigl\|\ssf f\ssb*\ssb w_{\,t}^{\infty}\ssf\|_{L^q\vphantom{L^{q'}}}
\lesssim\,|t|^{-(n-1)(\frac12-\frac{1}{q})} \|f\|_{L^{q'}}
\qquad\forall\;f\!\in\!L^{q'}.
\end{equation*}
\end{proof}

\subsection{Large time dispersive estimate}
 
\begin{theorem}\label{dispersiveinfty}
Assume that
\,$|t|\ssb\ge\ssb2$\ssf,
\ssf$2\ssb<\ssb q\ssb<\ssb\infty$\ssf and \,$\sigma\ssb\ge\ssb(n\ssb+\!1)\ssf(\frac12\!-\!\frac1q)$\ssf.
Then
\begin{equation*}
\bigl\|\ssf\widetilde{D}^{\ssf-\sigma}\ssf e^{\,i\,t\ssf D}
\ssf\bigr\|_{L^{q'}\ssb\to L^q}\lesssim\,|t|^{\ssf-\frac{3}{2}}\,.
\end{equation*}
\end{theorem}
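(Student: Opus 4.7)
The plan is to parallel the proof of Theorem \ref{dispersive0}, splitting the kernel $w_{\ssf t}^{\ssf\sigma}\ssb=\ssb w_{\ssf t}^{\sigma,0}\ssb+\ssb w_{\ssf t}^{\sigma,\infty}$ and handling the two pieces by different techniques. It suffices to treat the endpoint $\sigma\ssb=\ssb(n\!+\!1)(\frac12\!-\!\frac1q)$, since larger values of $\sigma$ produce a stronger estimate after absorbing the extra smoothing into the prefactor $e^{\ssf\sigma^2}/\Gamma(\frac{n+1}{2}-\sigma)$.

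\emph{Contribution of $w_{\ssf t}^{\sigma,0}$.} Apply Lemma \ref{KS} with $\kappa\ssb=\ssb w_{\ssf t}^{\sigma,0}$; the task reduces to showing that
\begin{equation*}
\int_{\,0}^{+\infty}\hspace{-1mm}dr\;(\sinh r)^{n-1}\,\varphi_0(r)\,
|\ssf w_{\ssf t}^{\sigma,0}(r)\ssf|^{\frac q2}\,\lesssim\,|t|^{-\frac{3q}4}\ssf,
\end{equation*}
so that the $\frac2q$-th power yields the desired $|t|^{-3/2}$. I would split this integral at $r\ssb=\ssb|t|/2$. On $[\ssf 0,|t|/2\ssf]$, insert the estimate $|\ssf w_{\ssf t}^{\sigma,0}(r)\ssf|\ssb\lesssim\ssb|t|^{-3/2}(1\!+\ssb r)\ssf\varphi_0(r)$ from Theorem \ref{Estimatewt0}.ii; using $(\sinh r)^{n-1}\!\lesssim\!e^{\ssf 2\rho\ssf r}$ and \eqref{phi0}, the integrand is dominated by $|t|^{-3q/4}\ssf(1\!+\ssb r)^{q+1}\ssf e^{-(\frac q2-1)\ssf\rho\ssf r}$, which is globally integrable precisely because $q\!>\!2$. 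On $[\ssf|t|/2,+\infty)$, only the uniform bound $|\ssf w_{\ssf t}^{\sigma,0}(r)\ssf|\ssb\lesssim\ssb\varphi_0(r)$ from Theorem \ref{Estimatewt0}.i is available, but the same computation then produces an integrand $\lesssim(1\!+\ssb r)^{q/2+1}\ssf e^{-(\frac q2-1)\ssf\rho\ssf r}$ whose tail past $|t|/2$ is exponentially small in $|t|$, hence negligible.

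\emph{Contribution of $w_{\ssf t}^{\sigma,\infty}$.} I would run Stein's analytic interpolation theorem on the family $\widetilde{W}_{\,t}^{\ssf\sigma,\infty}$ of \eqref{AnalyticFamily} across the vertical strip $0\ssb\le\ssb\Re\sigma\ssb\le\ssb\frac{n+1}2$, the vertical growth in $\Im\sigma$ being absorbed by the factor $e^{\ssf\sigma^2}/\Gamma\bigl(\frac{n+1}2\!-\!\sigma\bigr)$. On $\Re\sigma\ssb=\ssb0$, spectral calculus gives a $t$-uniform $L^2\!\to\!L^2$ bound. On $\Re\sigma\ssb=\ssb\frac{n+1}2$, Theorem \ref{Estimatewtildetinfty}.i supplies the uniform pointwise estimate $|\ssf\widetilde{w}_{\,t}^{\ssf\infty}(r)\ssf|\ssb\lesssim\ssb|t|^{-\infty}$ for all $r\ssb\ge\ssb0$, hence by Young's inequality an $L^1\!\to\!L^\infty$ norm of order $|t|^{-\infty}$. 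Interpolating at $\sigma\ssb=\ssb(n\!+\!1)(\frac12\!-\!\frac1q)$ then yields an $L^{q'}\!\to\!L^q$ bound of order $|t|^{-\infty}$, far stronger than the required $|t|^{-3/2}$.

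The main obstacle is the low-frequency piece: one must check carefully that the Kunze--Stein weight $(\sinh r)^{n-1}\ssf\varphi_0(r)^{q/2+1}$ combines with the polynomial factor $(1\!+\ssb r)^{q/2}$ from the kernel bound to form an exponentially decaying integrand. The exponent $-(\frac q2\!-\!1)\ssf\rho$ explains both why $q\ssb>\ssb2$ is essential and why the implicit constant necessarily blows up as $q\ssb\to\ssb2^+$. Once this integrability is in hand the rest is routine bookkeeping, and the high-frequency part is essentially automatic given the rapid-decay kernel estimate of Theorem \ref{Estimatewtildetinfty}.i.
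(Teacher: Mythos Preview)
Your proposal is correct and follows essentially the same approach as the paper: the paper also splits $w_t^{\sigma,0}$ at $r=|t|/2$ (though it phrases this as a kernel decomposition $\1_{B(0,|t|/2)}w_t^0+\1_{\Hn\smallsetminus B(0,|t|/2)}w_t^0$ before applying Lemma~\ref{KS}, which is equivalent to splitting the integral afterward since the supports are disjoint), and handles $w_t^{\sigma,\infty}$ by the identical analytic-interpolation argument. One small slip: the reduction to the endpoint $\sigma=(n+1)(\tfrac12-\tfrac1q)$ is not via the Gamma prefactor (that controls vertical growth for Stein interpolation) but via the $L^q$-boundedness of $\widetilde{D}^{-(\sigma-\sigma_0)}$ for $\sigma\ge\sigma_0$, which holds because $\tilde\kappa>\rho$.
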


\begin{proof}
We divide the proof into three parts,
corresponding to the kernel decomposition
\begin{equation*}
w_t=\1_{\ssf B(0,\frac{|t|}{2})}\ssf w_{\ssf t}^{\ssf0}
+\1_{\,\Hn\smallsetminus\ssf B(0,\frac{|t|}{2})}\ssf w_{\ssf t}^{\ssf0}
+\ssf w_{\ssf t}^\infty\,.
\end{equation*}

\noindent
\textit{Estimate 1}\,:
By applying Lemma \ref{KS}
and using the pointwise estimate in Theorem \ref{Estimatewt0}.ii, 
we obtain
\begin{equation*}
\begin{aligned}
\|\ssf f*\{\1_{\ssf B(0,\frac{|t|}{2})}\ssf w_{\ssf t}^{0}\ssf\}\,\|_{L^q}
&\lesssim\,\Bigl\{\ssf\int_{\,0}^{\frac{|t|}{2}}\!dr\,
(\sinh r)^{n-1}\,\varphi_0(r)\,|\ssf w_{\ssf t}^{0}(r)|^{\frac q2}
\,\Bigr\}^{\frac2q} \;\|f\|_{L^{q'}}\\
&\lesssim\,\underbrace{\Bigl\{\ssf\int_{\,0}^{\frac{|t|}{2}}\hspace{-1mm}dr\,
(1\!+\ssb r)^{q+1}\,e^{-(\frac q2-1)\ssf\rho\ssf r}\,\Bigr\}^{\frac2q}}_{<+\infty}\,
|t|^{-\frac{3}{2}}\;\|f\|_{L^{q'}}
\qquad\forall\;f\!\in\!L^{q'}.
\end{aligned}
\end{equation*}

\noindent
\textit{Estimate 2}\,:
By applying Lemma \ref{KS}
and using the pointwise estimate in Theorem \ref{Estimatewt0}.i, 
we obtain
\begin{equation*}
\begin{aligned}
\|\ssf f*\{\1_{\ssf\Hn\smallsetminus\ssf  B(0,\frac{|t|}{2})}\ssf
w_{\ssf t}^0\ssf\}\,\|_{L^q}
&\lesssim\,\Bigl\{\ssf\int_{\frac{|t|}{2}}^{+\infty}\hspace{-1mm}dr\,
(\sinh r)^{n-1}\,\varphi_0(r)\,|\ssf w_{\ssf t}^{0}(r)|^{\frac q2}
\,\Bigr\}^{\frac2q}\;\|f\|_{L^{q'}}\\
&\lesssim\,\underbrace{
\Bigl\{\ssf\int_{\, \frac{|t|}{2}}^{+\infty}\hspace{-1mm}dr\,
r^{\frac q2+1}\,e^{-(\frac q2-1)\ssf\rho\ssf r}\,\Bigr\}^{\frac2q}
}_{\lesssim\;|t|^{-\infty}}\, 
\|f\|_{L^{q'}}
\qquad\forall\;f\!\in\!L^{q'}.
\end{aligned}
\end{equation*}

\noindent
\textit{Estimate 3}\,:
We proceed by interpolation
for the analytic family \eqref{AnalyticFamily}.
If \ssf$\Re\sigma\ssb=\ssb0\ssf$, then
\begin{equation*}
\|\ssf f*\widetilde{w}_{\,t}^{\ssf\infty}\ssf\|_{L^2}
\lesssim\,\|f\|_{L^2}
\qquad\forall\;f\!\in\!L^2.
\end{equation*}
If \ssf$\Re\sigma\ssb=\ssb\frac{n+1}2$,
we deduce from Theorem \ref{Estimatewtildetinfty}.i that
\begin{equation*}
\|\ssf f*\widetilde{w}_{\,t}^{\ssf\infty}\ssf\|_{L^\infty}
\lesssim\,|t|^{-\infty} \,\|f\|_{L^1}
\qquad\forall\;f\!\in\!L^1.
\end{equation*}
By interpolation we obtain for
\ssf$\sigma\ssb=\ssb(n\!+\!1)\bigl(\frac12\!-\!\frac1q\bigr)$
that
\begin{equation*}
\|\ssf f*w_{\,t}^\infty\ssf\|_{L^q\vphantom{L^{q'}}}
\lesssim\,|t|^{-\infty} \,\|f\|_{L^{q'}}
\qquad\forall\;f\!\in\!L^{q'}.
\end{equation*}
We conclude the proof of Theorem \ref{dispersiveinfty}
by summing up the previous estimates.
\end{proof}

\subsection{Global dispersive estimates}
${}$\medskip

As noticed in Remark \ref{GeneralizedKernelEstimates},
similar results hold for the operators
\,$D^{-\sigma}\widetilde{D}^{-\tilde{\sigma}}\ssf e^{\,i\ssf t\ssf D}$.

\begin{corollary}\label{DispersiveGlobal}
Let \,$2\!<\!q\!<\!\infty$
and \,$\sigma,\tilde{\sigma}\!\in\ssb\R$
such that \,$\sigma\!+\ssb\tilde{\sigma}\!
\ge\!(n\!+\!1)\bigl(\frac12\!-\!\frac1q\bigr)$.
Then
\begin{equation}\label{GeneralDispersiveEstimates}\textstyle
\|\,D^{-\sigma}\widetilde{D}^{-\tilde{\sigma}}\,
e^{\,i\ssf t\ssf D}\,\|_{L^{q'}\!\to L^q}
\lesssim\,\begin{cases}
\;|t|^{-(n-1)(\frac12-\frac1q)}
&\text{if \;}0\!<\!|t|\!\le\!1\ssf,\\
\;|t|^{-\frac32}
&\text{if \;}|t|\!\ge\!1\ssf.
\end{cases}\end{equation}
In particular,
if \,$2\!<\!q\!<\!\infty$
and \,$\sigma\!\ge\!(n\!+\!1)\bigl(\frac12\!-\!\frac1q\bigr)$,
then
\begin{equation}\label{ParticularDispersiveEstimates}\textstyle
\|\ssf\widetilde{D}^{-\sigma}\ssf
e^{\,i\,t\ssf D}\ssf\|_{L^{q'}\!\to L^q}
+\,\|\ssf\widetilde{D}^{\ssf1-\sigma}\,
\frac{e^{\,i\,t\ssf D}}D\ssf\|_{L^{q'}\!\to L^q}
\lesssim\,\begin{cases}
\;|t|^{-(n-1)(\frac12-\frac1q)}
&\text{if \;}0\!<\!|t|\!\le\!1\ssf,\\
\;|t|^{-\frac32}
&\text{if \;}|t|\!\ge\!1\ssf.
\end{cases}\end{equation}
These results hold in dimension \,$n\!\ge\!3$\ssf.
In dimension \,$n\!=\!2$\ssf,
there is an additional logarithmic factor in the small time bound,
which reads
\,$|t|^{-(\frac12-\frac1q)}\ssf(\ssf1\!-\ssb\log|t|\ssf)^{1-\frac2q}$.
\end{corollary}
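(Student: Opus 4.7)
The plan is to transfer the analysis of Theorems~\ref{dispersive0} and~\ref{dispersiveinfty} from the one-parameter family $\widetilde{D}^{-\sigma}e^{\,i\ssf t\ssf D}$ to the two-parameter family $D^{-\sigma}\widetilde{D}^{-\tilde{\sigma}}e^{\,i\ssf t\ssf D}$ via Remark~\ref{GeneralizedKernelEstimates}, and then to read off the particular case \eqref{ParticularDispersiveEstimates} by specializing $(\sigma,\tilde{\sigma})$. I would first reduce to the critical equality $\sigma+\tilde{\sigma}=(n\!+\!1)\bigl(\tfrac12\!-\!\tfrac1q\bigr)$, any excess regularity being absorbed into a spectral multiplier of $D,\widetilde{D}$ which is bounded on the relevant Lebesgue spaces (both operators are bounded below by a positive constant). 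Writing the convolution kernel as $w_{\ssf t}^{\sigma,\tilde{\sigma}}=w_{\ssf t}^{\sigma,\tilde{\sigma},0}+w_{\ssf t}^{\sigma,\tilde{\sigma},\infty}$ via the cut-offs $\chi_0,\chi_\infty$ of Section~\ref{Kernel}, Remark~\ref{GeneralizedKernelEstimates} then transfers the pointwise bounds of Theorems~\ref{Estimatewt0} and~\ref{Estimatewtildetinfty} directly to this setting, with the sum $\sigma+\tilde{\sigma}$ playing the role previously played by the single exponent $\sigma$.

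For the low-frequency piece $w_{\ssf t}^{\sigma,\tilde{\sigma},0}$, I would invoke Lemma~\ref{KS} together with Theorem~\ref{Estimatewt0}.i in the short-time regime $0<|t|\leq 2$, and with the further splitting $\1_{\ssf B(0,|t|/2)}\ssf w_{\ssf t}^{\sigma,\tilde{\sigma},0}+\1_{\ssf\Hn\smallsetminus B(0,|t|/2)}\ssf w_{\ssf t}^{\sigma,\tilde{\sigma},0}$ for $|t|\geq 2$, using Theorem~\ref{Estimatewt0}.ii on the ball and Theorem~\ref{Estimatewt0}.i combined with exponential decay off the ball, exactly as in the proof of Theorem~\ref{dispersiveinfty}. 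For the high-frequency piece $w_{\ssf t}^{\sigma,\tilde{\sigma},\infty}$, I would run Stein interpolation on the two-parameter analytic family
\[
\widetilde{W}_{\,t}^{\,z,\infty}=\frac{e^{\,z^{2}}}{\Gamma(\tfrac{n+1}{2}-z)}\,\chi_\infty(D)\,D^{-az}\widetilde{D}^{-(1-a)z}\,e^{\,i\ssf t\ssf D},\qquad a=\frac{\sigma}{\sigma+\tilde{\sigma}},
\]
which is bounded $L^{2}\!\to\!L^{2}$ at $\Re z=0$ by spectral calculus and bounded $L^{1}\!\to\!L^{\infty}$ at $\Re z=\tfrac{n+1}{2}$ by the generalized version of Theorem~\ref{Estimatewtildetinfty}, with kernel bounds $|t|^{-(n-1)/2}$ for $|t|\leq 2$ and $|t|^{-\infty}$ for $|t|\geq 2$ on the latter edge. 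Evaluating the interpolated estimate at $z=\sigma+\tilde{\sigma}$ and summing with the low-frequency contribution produces \eqref{GeneralDispersiveEstimates}.

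The particular case \eqref{ParticularDispersiveEstimates} then falls out by applying \eqref{GeneralDispersiveEstimates} twice: with the pair $(\sigma_{\mathrm{gen}},\tilde{\sigma}_{\mathrm{gen}})=(0,\sigma)$ to bound $\widetilde{D}^{-\sigma}e^{\,i\ssf t\ssf D}$, and with $(1,\sigma\!-\!1)$ to bound $D^{-1}\widetilde{D}^{\,1-\sigma}e^{\,i\ssf t\ssf D}=\widetilde{D}^{\,1-\sigma}e^{\,i\ssf t\ssf D}/D$, both sums being $\sigma\geq(n\!+\!1)\bigl(\tfrac12\!-\!\tfrac1q\bigr)$; the dimension-two logarithmic correction is inherited verbatim from Theorem~\ref{Estimatewtildetinfty}.ii.b. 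The one obstacle worth attention is verifying that the generalized amplitude $\lambda^{-az}(\lambda^2\!+\!\tilde\kappa^2)^{-(1-a)z/2}$, cut off by $\chi_\infty$, remains a classical symbol of the appropriate order uniformly along both vertical edges of the strip $0\leq\Re z\leq\tfrac{n+1}{2}$; this is routine since $\chi_\infty$ removes a neighborhood of the origin and the factor behaves like $\lambda^{-\Re z}$ at infinity, so Stein interpolation applies with the admissible polynomial growth in $\Im z$ supplied by the $e^{\,z^{2}}/\Gamma(\tfrac{n+1}{2}\!-\!z)$ prefactor.
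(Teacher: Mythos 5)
Your proposal is correct and follows essentially the same route as the paper: the Corollary is obtained by invoking Remark \ref{GeneralizedKernelEstimates} to transfer the kernel bounds of Section \ref{Kernel} to $D^{-\sigma}\widetilde{D}^{-\tilde\sigma}e^{\,i\,t\,D}$ under the constraint $\Re\sigma+\Re\tilde\sigma=\frac{n+1}{2}$, and then rerunning the Kunze--Stein plus analytic-interpolation scheme of Theorems \ref{dispersive0} and \ref{dispersiveinfty}, with \eqref{ParticularDispersiveEstimates} read off by the specializations $(0,\sigma)$ and $(1,\sigma-1)$ exactly as you indicate. The only slip is cosmetic: the first factor of the high-frequency amplitude should be $(\lambda^2+\kappa^2)^{-az/2}$ rather than $\lambda^{-az}$, which is in fact even better behaved and does not affect the symbol estimates you need.
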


\begin{remark}
On \ssf$L^2(\Hn)$, we know by spectral theory that
\begin{itemize}
\item
$\,e^{\,i\,t\ssf D}$ is a one--parameter group of unitary operators,
\item
$\,D^{-\sigma}\tilde{D}^{-\tilde{\sigma}}$ is a bounded operator
if \,$\sigma\ssb+\ssb\tilde{\sigma}\ssb\ge\ssb0$\ssf.
\end{itemize}
\end{remark}

\begin{remark}
Let us specialize our results
for the wave equation \eqref{nonshiftedWave}.
In this case, we have \,$D\ssb=\!\sqrt{-\Delta\ssf}$
and we may take \,$\tilde{D}\ssb=\ssb D$.
Let \,$2\!<\!q\!<\!\infty$ and
\,$\sigma\!\ge\!(n\!+\!1)\bigl(\frac12\!-\!\frac1q\bigr)$\ssf.
Then
\begin{equation}\label{DispersiveEstimateWaveND}
\bigl\|\,D^{-\sigma}\ssf e^{\,i\,t\ssf D}\,\bigr\|_{L^{q'}\!\to L^q}
\lesssim\,\begin{cases}
\;|t|^{-(n-1)(\frac12-\frac1q)}
&\text{if \;}0\!<\!|t|\!\le\!1\\
\;|t|^{-\frac32}
&\text{if \;}|t|\!\ge\!1
\end{cases}
\end{equation}
in dimension \,$n\!\ge\!3$ and
\begin{equation*}\label{DispersiveEstimateWave2D}
\bigl\|\,D^{-\sigma}\ssf e^{\,i\,t\ssf D}\,\bigr\|_{L^{q'}\!\to L^q}
\lesssim\,\begin{cases}
\;|t|^{-(\frac12-\frac1q)}\ssf(\ssf1\!-\ssb\log|t|\ssf)^{1-\frac2q}
&\text{if \;}0\!<\!|t|\!\le\!1\\
\;|t|^{-\frac32}
&\text{if \;}|t|\!\ge\!1
\end{cases}
\end{equation*}
in dimension \,$n\!=\!2$\ssf.
Let us compare \eqref{DispersiveEstimateWaveND}
with the dispersive estimates
obtained by Metcalfe and Taylor \cite[Section 3]{MT}
in dimension \,$n\!=\!3$\ssf.
Our results are the same when \,$|t|$ is small and \,$2\!<\!q\!<\!\infty$
or when \,$|t|$ is large and \,$4\!\le\!q\!<\!\infty$\ssf.
But our bound \,$|t|^{-\frac32}$
is better than their bound \,$|t|^{-\ssf6\ssf(\frac12-\frac1q)}$
when \,$|t|$ is large and \,$2\!<\!q\!<\!4$\ssf.
On the other hand,
they are able to deal with the endpoint case \,$q\!=\!\infty$\ssf,
using local Hardy and BMO spaces on \,$\Hn$.
\end{remark}

\newpage

\section{Strichartz estimates}
\label{Strichartz}

We shall assume \ssf$n\!\ge\!4$ \ssf throughout this section
and discuss the dimensions \ssf$n\!=\!3$ \ssf and \ssf$n\!=\!2$
\ssf in the final remarks.
Consider the linear equation \eqref{IP} on \ssf$\Hn$,
whose solution is given by Duhamel's formula\,:
\begin{equation*}\textstyle
u(t,x)=\ssf\underbrace{\vphantom{\int_0^t}\textstyle
(\cos t\ssf D_x)\ssf f\ssf(x)
+\frac{\sin t\ssf D_x}{D_x}\ssf g\ssf(x)
}_{u_{\ssf\text{hom}}(t,x)}\ssf
+\underbrace{
\int_{\,0}^{\ssf t}\textstyle ds\,
\frac{\sin\ssf(t-s)\ssf D_x}{D_x}\ssf F(s,x)
}_{u_{\ssf\text{inhom}}(t,x)}\,.
\end{equation*}
In Appendix B, we recall the definition of Sobolev spaces on \ssf$\Hn$
and collect some of their properties.

\begin{definition}
A couple \,$(p,q)$ will be called \,{\rm admissible}
if \,$(\frac1p,\frac1q)$ belongs to the triangle
\begin{equation}\label{triangle}\textstyle
\bigl\{\ssf\bigl(\frac1p,\frac1q\bigr)\!\in\!
\bigl(0,\frac12\bigr]\!\times\!\bigl(0,\frac12\bigr)\bigm|
\frac1p\ssb\ge\ssb\frac{n-1}2\ssf\bigl(\frac12\!-\!\frac1q\bigr)\ssf\bigr\}
\ssf\cup\ssf\bigl\{\bigl(0,\frac12\bigr)\bigr\}\,.
\end{equation}
\end{definition}

\begin{figure}[ht]
\begin{center}
\psfrag{0}[c]{$0$}
\psfrag{1}[c]{$1$}
\psfrag{1/2}[c]{$\frac12$}
\psfrag{1/2-1/(n-1)}[c]{$\frac12\!-\!\frac1{n-1}$}
\psfrag{1/p}[c]{$\frac1p$}
\psfrag{1/q}[c]{$\frac1q$}
\includegraphics[width=7cm]{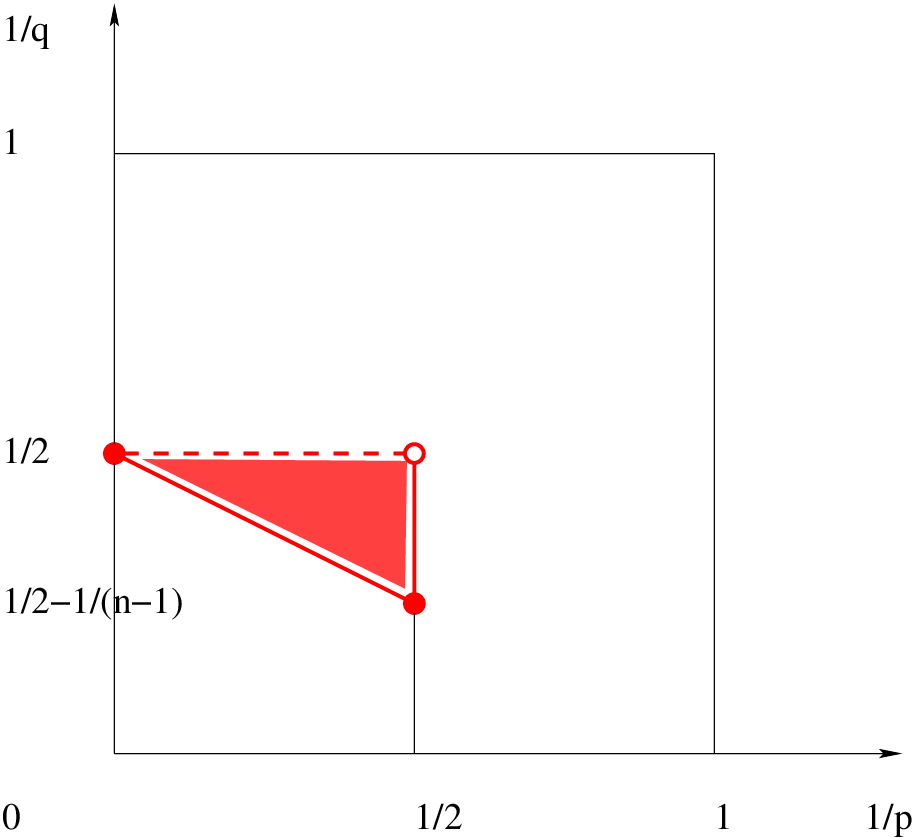}
\end{center}
\caption{Admissibility in dimension \ssf$n\!\ge\!4$}
\end{figure}

\begin{theorem}\label{StrichartzND}
Let \,$(p,q)$ and \,$(\tilde p, \tilde q)$ be two admissible couples,
and let
\begin{equation}\label{sigmas}\textstyle
\sigma\ge\frac{(n+1)}2\,\big(\frac12\ssb-\ssb\frac1q\big)
\quad\text{and}\quad
\tilde{\sigma}\ge\frac{(n+1)}2\,\big(\frac12\ssb-\ssb\frac1{\tilde{q}}\big)\,.
\end{equation}
Then the following Strichartz estimate holds
for solutions to the Cauchy problem \eqref{IP}\,{\rm :}
\begin{equation}\label{StrichartzEstimate1}
\|\ssf\nabla_{\R\times\Hn}u\ssf\|_{\vphantom{L^{p'}}
L^pH^{-\sigma,\hspace{.1mm}q}}
\lesssim\,\|\ssf f\ssf\|_{\vphantom{L^{p'}}H^1}\ssb
+\,\|\ssf g\ssf\|_{\vphantom{L^{p'}}L^2}\ssb
+\,\|\ssf F\ssf\|_{\vphantom{L^{p'}}
L^{\tilde{p}'}\!H^{\tilde{\sigma}\ssb,\tilde{q}'}}\ssf.
\end{equation}
\end{theorem}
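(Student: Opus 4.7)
The plan is to follow the Ginibre--Velo / Keel--Tao $TT^*$ approach, adapted to the two--regime dispersive estimate of Corollary \ref{DispersiveGlobal}. Writing $\cos\ssf(tD) = \tfrac12\ssf(e^{\,i\,t\,D}\!+\ssb e^{-i\,t\,D})$ and $\sin\ssf(tD)/D = (2iD)^{-1}(e^{\,i\,t\,D}\!-\ssb e^{-i\,t\,D})$, and using the spectral lower bound $D\ssb\geq\ssb\kappa\ssb>\ssb0$ together with the $L^q(\Hn)$--boundedness of the Riesz--type operators $\nabla_x/\widetilde D$ and $\widetilde D/D$, the homogeneous part of \eqref{StrichartzEstimate1} reduces to the single bound
\begin{equation*}
\|\,\widetilde D^{\ssf-\sigma}\,e^{\,i\,t\,D}\,\varphi\,\|_{L^p(I;\,L^q(\Hn))}
\,\lesssim\,\|\varphi\|_{L^2(\Hn)},
\end{equation*}
after commuting spatial derivatives past $e^{\,i\,t\,D}$ (licit since the latter is a spectral multiplier of $-\Delta$) and trading $\partial_t\,e^{\,i\,t\,D}$ for $iD\,e^{\,i\,t\,D}$.

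The $TT^*$ identity translates this $L^2\!\to\ssb L^p L^q$ bound into the convolution inequality $\|K\!*\!F\|_{L^p(I)}\ssb\lesssim\ssb\|F\|_{L^{p'}(I)}$, where the time kernel $K(t)$ is the $L^{q'}\!\to\ssb L^q$ operator norm of $\widetilde D^{\,-2\sigma}\,e^{\,i\,t\,D}$; by Corollary \ref{DispersiveGlobal} applied with $\sigma+\widetilde\sigma = 2\sigma \ssb\geq\ssb (n\!+\!1)(\tfrac12\!-\!\tfrac1q)$ this kernel satisfies
\begin{equation*}
K(t)\,\lesssim\,
\begin{cases}
\,|t|^{-(n-1)(\frac12-\frac1q)} & \text{if \;}0\!<\!|t|\!\leq\!1,\\
\,|t|^{-\frac32} & \text{if \;}|t|\!\geq\!1.
\end{cases}
\end{equation*}
By Young's inequality it suffices to check $K\!\in\!L^{p/2}(\R)$. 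The large--time piece $|t|^{-3/2}$ lies in $L^{p/2}$ whenever $p\!>\!4/3$, hence automatically for every admissible $p$; the small--time piece lies in $L^{p/2}$ exactly when $\tfrac1p\!>\!\tfrac{n-1}2\bigl(\tfrac12\!-\!\tfrac1q\bigr)$. This recovers the open interior of the triangle \eqref{triangle}. The sharp lower edge $\tfrac1p\!=\!\tfrac{n-1}2\bigl(\tfrac12\!-\!\tfrac1q\bigr)$ is obtained by replacing Young with the weak--type Hardy--Littlewood--Sobolev inequality, exactly as in the Euclidean setting, while the isolated vertex $(1/p,1/q)\!=\!(0,1/2)$ (namely $(p,q)\!=\!(\infty,2)$) follows trivially from the unitarity of $e^{\,i\,t\,D}$ on $L^2$ and the spectral definition of the Sobolev norms.

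For the inhomogeneous estimate with two (possibly distinct) admissible pairs $(p,q)$ and $(\tilde p,\tilde q)$, the Christ--Kiselev lemma (legitimate whenever $p\!>\!\tilde p'$) replaces the retarded integral $\int_0^t$ by the unrestricted integral over $\R$, which by duality is controlled by the bilinear form of the $TT^*$ argument: the same kernel $K(t\!-\!s)$ combined with a mixed--norm Young / Hardy--Littlewood--Sobolev inequality in time yields \eqref{StrichartzEstimate1}.

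The main technical obstacle is the sharp corner $(1/p,1/q) = (1/2,\,1/2\!-\!1/(n\!-\!1))$ at the intersection of the Euclidean admissibility line and the upper edge $1/p\!=\!1/2$: this is the Keel--Tao endpoint and genuinely requires a bilinear real--interpolation argument between the small--time dispersive bound and the $L^2$ unitarity. Crucially, the improved large--time decay $|t|^{-3/2}$ --- independent of $q$ and strictly better than the Euclidean rate $|t|^{-(n-1)(1/2-1/q)}$ --- ensures automatic integrability at infinity and confines the delicate analysis to the small--time regime. This additional flexibility is precisely what permits the admissibility region on $\Hn$ to fill the full two--dimensional triangle \eqref{triangle}, rather than just the one--dimensional Euclidean line.
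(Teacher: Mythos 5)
Your approach is essentially the paper's: a $TT^*$ argument feeding on the two-regime dispersive bound of Corollary \ref{DispersiveGlobal}, with the small-time singular kernel handled by fractional integration on the open triangle and its lower edge, the Keel--Tao endpoint treated by the bilinear real-interpolation argument of \cite{KT}, and Christ--Kiselev for the inhomogeneous estimate with mismatched couples. Two points nevertheless deserve care.

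First, $\nabla_x$ does \emph{not} commute with $e^{\,i\,t\,D}$ (it is not a spectral multiplier of $\Delta$), so the reduction of $\|\ssf\nabla_{\R\times\Hn}u\ssf\|_{L^pH^{-\sigma,q}}$ to powers of $\widetilde D$ acting on $u$ and $\partial_t u$ is not a matter of commutation; it rests on the $L^q$-Sobolev norm equivalences of Appendix~B (replacing $(-\Delta)^{-\sigma/2}$ by $\widetilde D^{-\sigma}$ uses $\tilde\kappa>\rho$, whereas $D$ might have small $\kappa$). The paper therefore formulates the estimate to be proved as \eqref{StrichartzEstimate2}, with $\widetilde D^{-\sigma\pm1/2}$ on the left and $D^{\pm1/2}$ on the initial data, which also matches the splitting $\widetilde D^{-2\sigma+1}D^{-1}$ entering \eqref{ParticularDispersiveEstimates}.

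Second, Christ--Kiselev requires $p>\tilde p'$, which fails precisely when $p=\tilde p=2$. In dimension $n\ge4$ the admissibility region contains a whole segment $\{\,\tfrac1p=\tfrac12,\ \tfrac12-\tfrac1{n-1}\le\tfrac1q<\tfrac12\,\}$, so $p=\tilde p=2$ with $q\ne\tilde q$ is an actual sub-case of the theorem, not just the single Keel--Tao vertex. Your proposal does not address it; the paper handles it by adapting the bilinear approach of \cite{KT}, as in \cite[Theorem 6.3]{APV1}. This is the one genuine gap in your sketch.
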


\begin{proof}
We shall prove the following estimate,
which amounts to \eqref{StrichartzEstimate1}\,:
\begin{equation}\label{StrichartzEstimate2}\begin{aligned}
&\|\ssf\tilde{D}_{\,x}^{-\sigma+1/2}\,
u(t,x)\ssf\|_{L_t^pL_x^q\vphantom{L_x^{\tilde{q}}}}\ssb
+\,\|\ssf\tilde{D}_{\,x}^{-\sigma-1/2}\,\partial_{\ssf t}^{\vphantom{0}}
u(t,x)\ssf\|_{L_t^pL_x^q\vphantom{L_x^{\tilde{q}}}}\\
&\lesssim\ssf\|\ssf D_{\,x}^{1/2}\ssf f(x)\ssf\|_{
L_x^2\vphantom{L_x^{\tilde{q}}}}
+\ssf\|\ssf D_{\,x}^{-1/2}\ssf g(x)\ssf\|_{
L_x^2\vphantom{L_x^{\tilde{q}}}}
+\ssf\|\ssf\tilde{D}_{\,x}^{\ssf\tilde{\sigma}-1/2}\ssf
F(t,x)\,\|_{L_t^{\tilde{p}'}\!L_x^{\tilde{q}'}}\ssf.
\end{aligned}\end{equation}
Consider the operator
\begin{equation*}\textstyle
T\ssb f\ssf(t,x)=\tilde{D}_{\ssf x}^{-\sigma+1/2}\,
\frac{e^{\ssf\pm\ssf i\,t\ssf D_x}}{\sqrt{D_x}}\ssf f\ssf(x)\ssf,
\end{equation*}
initially defined from $L^2(\Hn)$ into $L^\infty(\R\ssf;\ssb L^2(\Hn)\ssb)$,
and its formal adjoint
\begin{equation*}
T^*\ssb F\ssf(x)=\ssb\int_{-\infty}^{+\infty}\hspace{-1mm}ds\;\textstyle
\tilde{D}_{\ssf x}^{-\sigma+1/2}\,\frac{e^{\ssf\mp\ssf i\,s\ssf D_x}}{\sqrt{D_x}}
\,F\ssf (s,x)\ssf,
\end{equation*}
initially defined from $L^1\ssb(\R\ssf;\ssb L^2\ssb(\Hn)\ssb)$ into $L^2(\Hn)$.
The \ssf$TT^*$ \ssb method consists in proving first the
\ssf$L^{p'}\ssb(\R\ssf;\ssb L^{q'}\ssb(\Hn)\ssb)\ssb
\to\ssb L^p(\R\ssf;\ssb L^q(\Hn)\ssb)$
\ssf boundedness of the operator
\begin{equation*}
TT^*\ssb F\ssf(t,x)
=\ssb\int_{-\infty}^{+\infty}\hspace{-1mm}ds\;\textstyle
\tilde{D}_{\ssf x}^{-2\ssf\sigma+1}\,\frac{e^{\ssf\pm\ssf i\ssf(t-s)\ssf D_x}}{D_x}
\,F\ssf(s,x)
\end{equation*}
and of its truncated version
\begin{equation*}
\mathcal{T}\ssb F\ssf(t,x)
=\ssb\int_{-\infty}^{\,t}\hspace{-1mm}ds\;\textstyle
\tilde{D}_{\ssf x}^{-2\ssf\sigma+1}\,\frac{e^{\ssf\pm\ssf i\ssf(t-s)\ssf D_x}}{D_x}
\,F\ssf(s,x)\ssf,
\end{equation*}
for every admissible couple $(p,q)$
and for every $\sigma\!\ge\!\frac{n+1}2\bigl(\frac12\!-\!\frac1q\bigr)$,
and in decoupling next the indices.

We may disregard the endpoint case \ssf$(p,q)\!=\!(\infty,2)$\ssf,
which is easily dealt with, using the boundedness on $L^2(\Hn)$ of
\,$e^{\,i\,t\ssf D}$ ($t\!\in\!\R$)
and \ssf$\tilde{D}^{-\sigma+\frac12}\ssf D^{-\frac12}$ ($\sigma\!\ge\!0$)\ssf.
Thus assume that $(p,q)$ is an admissible couple ,
which is different from the endpoints
$(\infty,2)$ and $(2,2\ssf\frac{n-1}{n-3})$.
It follows from \eqref{ParticularDispersiveEstimates} that
the norms \,$\|\ssf T\ssf T^*\ssb F(t,x)\ssf\|_{L_t^pL_x^q}$
and \,$\|\ssf\mathcal{T}\ssb F(t,x)\ssf\|_{L_t^pL_x^q}$
are bounded above by
\begin{equation}\label{HLS}
\Bigl\|\,\int_{\ssf0<|t-s|<1}\hspace{-1mm}ds\;
|\ssf t\ssb-\ssb s\ssf |^{-\alpha}\,
\|\ssf F(s,x)\ssf\|_{L_x^{q'}\vphantom{\big|}}\,\Bigr\|_{L_t^p}
+\;\Bigl\|\,\int_{\ssf|t-s|\ge1}\hspace{-1mm}ds\;
|\ssf t\ssb-\ssb s\ssf |^{-\frac32}\,
\|\ssf F(s,x)\ssf\|_{L_x^{q'}\vphantom{\big|}}\,\Bigr\|_{L_t^p}\,,
\end{equation}
where \ssf$\alpha\ssb=\ssb(n\!-\!1)(\frac12\!-\!\frac1q)\!\in\!(0,1)$\ssf.
On one hand, the convolution kernel
\,$|\ssf t\!-\!s\ssf|^{-\frac32}\ssf{\1}_{\,\{\ssf|\ssf t-s\ssf|\ge1\ssf\}}$
\ssf defines obviously
a bounded operator from \ssf $L^{p_1}\ssb(\R)$ to \ssf$L^{p_2}(\R)$,
for all \ssf$1\ssb\le\ssb p_1\!\le\ssb p_2\!\le\!\infty$\ssf,
in particular from \ssf$L^{p'}\ssb(\R)$ to \ssf$L^p(\R)$,
since \ssf$p\ssb\ge\ssb2$\ssf.
On the other hand, the convolution kernel
\,$|\ssf t\!-\!s\ssf|^{-\alpha}\,{\1}_{\,\{\ssf0<|\ssf t-s\ssf|\le1\ssf\}}$
\ssf with \ssf$0\!<\!\alpha\!<\!1$
\ssf defines a bounded operator
from \ssf$L^{p_1}\ssb(\R)$ to \ssf$L^{p_2}\ssb(\R)$,
for all \ssf$1\!<\!p_1,p_2\!<\!\infty$ \ssf such that
\ssf$0\ssb\le\!\frac1{p_1}\!-\!\frac1{p_2}\!\le\!1\!-\!\alpha\ssf$,
\ssf in particular from \ssf$L^{p'}\ssb(\R)$ to \ssf$L^p(\R)$,
since \ssf$p\ssb\ge\ssb2$ \ssf and \ssf$\frac2p\!\ge\ssb\alpha$\ssf.

At the endpoint \ssf$(p,q)\!=\!(2,2\ssf\frac{n-1}{n-3})$\ssf,
we have \ssf$\alpha\ssb=\!1$\ssf.
Thus the previous argument breaks down
and is replaced by the refined analysis carried out in \cite{KT}.
Notice that the problem lies only in the first part of \eqref{HLS}
and not in the second one,
which involves an integrable convolution kernel on \ssf$\R$\ssf.

Thus \,$TT^*$ and \,$\mathcal{T}$ are bounded
from \ssf$L^{p'}\ssb(\R\ssf;\ssb L^{q'}\ssb(\Hn)\ssb)$
to \ssf$L^p(\R\ssf;\ssb L^q(\Hn)\ssb)$\ssf,
for every admissible couple $(p,q)$\ssf.
As a consequence, \,$T^*$ is bounded
from \ssf$L^{p'}\ssb(\R\ssf;\ssb L^{q'}\ssb(\Hn)\ssb)$ \ssf to \ssf$L^2(\Hn)$
\ssf and \,$T$ \ssf is bounded
from \ssf$L^2(\Hn)$ \ssf to \ssf$L^p(\R\ssf;\ssb L^q(\Hn)\ssb)$.
We deduce in particular that
\begin{equation*}\textstyle
\bigl\|\ssf\tilde{D}_{x\vphantom{0}}^{-\sigma+1/2}\ssf(\cos t\ssf D_x)\ssf
f(x)\ssf\bigr\|_{L_t^pL_{x\vphantom{0}}^q\vphantom{\big|}}
\lesssim\,\bigl\|\ssf\tilde{D}_{x\vphantom{0}}^{-\sigma+1/2}\ssf
e^{\ssf\pm\ssf i\,t\ssf D_x\vphantom{\big|}}f\ssf(x)\ssf
\bigr\|_{L_t^pL_{x\vphantom{0}}^q\vphantom{\big|}}
\lesssim\,\bigl\|\ssf D_{x\vphantom{0}}^{1/2}f(x)\ssf
\bigr\|_{L_{x\vphantom{0}}^2\vphantom{\big|}}
\end{equation*}
and
\begin{equation*}\textstyle
\bigl\|\ssf\tilde{D}_{x\vphantom{0}}^{-\sigma+1/2}\ssf\frac{\sin t\ssf D_x}{D_x}\,
g\ssf(x)\ssf\bigr\|_{L_t^pL_{x\vphantom{0}}^q\vphantom{\big|}}
\lesssim\,\bigl\|\ssf\tilde{D}_{x\vphantom{0}}^{-\sigma+1/2}
D_{\ssf x\vphantom{0}}^{-1}\ssf
e^{\ssf\pm\ssf i\,t\ssf D_x\vphantom{\big|}}\ssf
g\ssf(x)\ssf\bigr\|_{L_t^pL_{x\vphantom{0}}^q\vphantom{\big|}}
\lesssim\,\bigl\|\ssf D_{\ssf x\vphantom{0}}^{-1/2}\ssf g\ssf(x)
\bigr\|_{L_{x\vphantom{0}}^2\vphantom{\big|}}\,.
\end{equation*}
In summary, 
\begin{equation}\label{HomogeneousStrichartzEstimate}\textstyle
\|\ssf\tilde{D}_{x\vphantom{0}}^{-\sigma+1/2}\ssf
u_{\ssf\text{hom}}(t,x)\ssf\|_{L_t^pL_{x\vphantom{0}}^q\vphantom{\big|}}
\lesssim\,\|\ssf D_{x\vphantom{0}}^{1/2}
f(x)\|_{L_{x\vphantom{0}}^2\vphantom{\big|}}
+\,\|\ssf D_{x\vphantom{0}}^{-1/2}
g\ssf(x)\ssf\|_{L_{x\vphantom{0}}^2\vphantom{\big|}}\,.
\end{equation}

We next decouple the indices
in the \ssf$L^{p'}\!L^{q'}\hspace{-1mm}\to\!L^qL^q$ estimate
of \ssf$TT^*$ and \ssf$\mathcal{T}$.
Let $(p,q)\ssb\ne\ssb(\tilde{p},\tilde{q})$ be two admissible couples
and let $\sigma\!\ge\!\frac{n+1}2\bigl(\frac12\!-\!\frac1q\bigr)$,
$\tilde{\sigma}\!\ge\!\frac{n+1}2\bigl(\frac12\!-\!\frac1{\tilde{q}}\bigr)$.
Since \,$T$ and \,$T^*$ are separately continuous,
the operator
\begin{equation*}
TT^*\ssb F(t,x)\ssf=\int_{-\infty}^{+\infty}\hspace{-1mm}ds\;\textstyle
\tilde{D}_{x\vphantom{0}}^{-\sigma-\tilde{\sigma}+1}\,
\frac{e^{\ssf\pm\ssf i\ssf(t-s)\ssf D_x}}{D_x}\,F(s,x)
\end{equation*}
is bounded from $L^{\tilde{p}'}\ssb(\R\ssf;\ssb L^{\tilde{q}'}\ssb(\Hn)\ssb)$
to $L^p(\R\ssf;\ssb L^q(\Hn)\ssb)$\ssf.
According to \cite{CK},
this result remains true for the truncated operator
\begin{equation*}
\mathcal{T}\ssb F(t,x)\,=\int_{-\infty}^{\,t}\hspace{-1mm}ds\;\textstyle
\tilde{D}_{x\vphantom{0}}^{-\sigma-\tilde{\sigma}+1}\,
\frac{e^{\ssf\pm\ssf i\ssf(t-s)\ssf D_x}}{D_x}\,F(s,x)
\end{equation*}
and hence for
\begin{equation*}
\widetilde{\mathcal{T}}\ssb F(t,x)\,=\int_{\,0}^{\ssf t}ds\;\textstyle
\tilde{D}_{x\vphantom{0}}^{-\sigma-\tilde{\sigma}+1}\,
\frac{\sin\ssf(t-s)D_x\ssf}{D_x}\,F(s,x)
\end{equation*}
as long as \ssf$p$ \ssf and \ssf$\tilde{p}$
\ssf are not both equal to \ssf$2$\ssf.
For the remaining case,
where \ssf$p\ssb=\ssb\tilde{p}\ssb=\ssb2$ \ssf and
\ssf$2\ssb<\ssb q\ssb\ne\ssb\tilde{q}\ssb\le\ssb2\ssf\frac{n-1}{n-3}$\ssf,
we argue as in the proof of \cite[Theorem 6.3]{APV1}
by resuming part of the bilinear approach in \cite{KT}.
Hence
\begin{equation}\label{InhomogeneousStrichartzEstimate}
\|\ssf\tilde{D}_{x\vphantom{0}}^{-\sigma+1/2}\ssf
u_{\ssf\text{inhom}}(t,x)\ssf\|_{L_t^pL_{x\vphantom{0}}^q\vphantom{\big|}}
\lesssim\,\|\ssf\tilde{D}_{x\vphantom{0}}^{\ssf\tilde{\sigma}-1/2}\ssf
F(t,x)\ssf\|_{L_t^{\tilde{p}'}\!L_{x\phantom{0}}^{\tilde{q}'}\vphantom{\big|}}
\end{equation}
for all admissible couples $(p,q)$ and $(\tilde{p},\tilde{q})$.

The Strichartz estimate
\begin{equation*}
\|\ssf\tilde{D}_{x\vphantom{0}}^{-\sigma+1/2}\ssf
u(t,x)\ssf\|_{L_t^pL_{x\vphantom{0}}^q\vphantom{\big|}}
\lesssim\,\|\ssf D_{x\vphantom{0}}^{1/2}f(x)\ssf
\|_{L_{x\vphantom{0}}^2\vphantom{\big|}}\ssb
+\,\|\ssf D_{x\vphantom{0}}^{-1/2}g\ssf(x)\ssf
\|_{L_{x\vphantom{0}}^2\vphantom{\big|}}\ssb
+\,\|\ssf\tilde{D}_{x\vphantom{0}}^{\ssf\tilde{\sigma}-1/2}\ssf
F(t,x)\|_{L_t^{\tilde{p}'}\!L_{x\vphantom{0}}^{\tilde{q}'}\vphantom{\big|}}
\end{equation*}
is obtained by summing up
the homogeneous estimate \eqref{HomogeneousStrichartzEstimate}
and the inhomogeneous estimate \eqref{InhomogeneousStrichartzEstimate}.
As far as it is concerned, the Strichartz estimate of
\begin{equation*}\textstyle
\partial_{\ssf t}u(t,x)
=-\,(\ssf\sin t\ssf D_x)\ssf D_x\ssf f\ssf(x)
+(\ssf\cos t\ssf D_x)\ssf g\ssf(x)\ssb
+{\displaystyle\int_{\,0}^{\ssf t}}ds\,
[\ssf\cos\ssf(t-s)\ssf D_x\ssf]\ssf F(s,x)
\end{equation*}
is obtained in the same way and is actually easier.
More precisely, we consider this time the operator
\begin{equation*}
\widetilde{T}\ssb f\ssf(t,x)=\tilde{D}_{\ssf x}^{-\sigma}\,
e^{\ssf\pm\ssf i\,t\ssf D_x}\ssf f\ssf(x)\ssf,
\end{equation*}
and its adjoint
\begin{equation*}
\widetilde{T}^*\ssb F\ssf(x)\ssf=\ssb\int_{-\infty}^{+\infty}\hspace{-1mm}ds\;
\tilde{D}_{\ssf x}^{-\sigma}\,e^{\ssf\mp\ssf i\,s\ssf D_x}\,F\ssf (s,x)\ssf.
\end{equation*}
\end{proof}

By using the Sobolev embedding theorem,
Theorem \ref{StrichartzND} can be extended to all couples
$\bigl(\frac1p,\frac1q\bigr)$ and $\bigl(\frac1{\tilde{p}},\frac1{\tilde{q}}\bigr)$
in the square
\begin{equation}\label{square}\textstyle
\bigl[\ssf0\ssf,\frac12\ssf\bigr]\!\times\!\bigl(\ssf0\ssf,\frac12\ssf\bigr)\,
\cup\,\bigl\{\bigr(0,\frac12\bigr)\bigr\}\,.
\end{equation}

\begin{corollary}\label{GeneralizedStrichartzND}
Let \,$(p,q)$, $(\tilde{p},\tilde{q})$ be two couples
corresponding to the square \eqref{square}
and let $\sigma,\tilde{\sigma}\!\in\!\R$\ssf.
Assume that \,$\sigma\!\ge\!\sigma(p,q)$\ssf,
where
\begin{equation*}\textstyle
\sigma(p,q)=\frac{n+1}2\ssf(\frac12\ssb-\ssb\frac1q)
+\ssf\max\,\{\ssf0\ssf,
\frac{n-1}2\ssf(\frac12\ssb-\ssb\frac1q)\ssb-\ssb\frac1p\ssf\}
=\begin{cases}
\ssf\frac{n+1}2\ssf(\frac12\ssb-\ssb\frac1q)
&\text{if \;}\frac1p\ssb\ge\ssb\frac{n-1}2\ssf(\frac12\ssb-\ssb\frac1q)\ssf,\\
\,n\,(\frac12\ssb-\ssb\frac1q)\ssb-\ssb\frac1p
&\text{if \;}\frac1p\ssb\le\ssb\frac{n-1}2\ssf(\frac12\ssb-\ssb\frac1q)\ssf,\\
\end{cases}\end{equation*}
and similarly \,$\tilde{\sigma}\ssb\ge\ssb\sigma(\tilde{p},\tilde{q})$\ssf.
Then the conclusion of Theorem \ref{StrichartzND}
holds for solutions to the Cauchy problem \eqref{IP}.
More precisely we have again the Strichartz estimate
\vspace{2mm}

\centerline{\eqref{StrichartzEstimate1}\hfill$
\|\ssf\nabla_{\R\times\Hn}u\ssf\|_{\vphantom{L^{p'}}
L^pH^{-\sigma,\hspace{.1mm}q}}
\lesssim\,\|\ssf f\ssf\|_{\vphantom{L^{p'}}H^1}\ssb
+\,\|\ssf g\ssf\|_{\vphantom{L^{p'}}L^2}\ssb
+\,\|\ssf F\ssf\|_{\vphantom{L^{p'}}
L^{\tilde{p}'}\!H^{\tilde{\sigma}\ssb,\tilde{q}'}}\ssf,
$\hfill}\vspace{2mm}

\noindent
which amounts to
\vspace{2mm}

\centerline{\eqref{StrichartzEstimate2}\hspace{22.5mm}$
\|\ssf\tilde{D}_{\,x}^{-\sigma+1/2}\,
u(t,x)\ssf\|_{L_t^pL_x^q\vphantom{L_x^{\tilde{q}}}}\ssb
+\,\|\ssf\tilde{D}_{\,x}^{-\sigma-1/2}\,\partial_{\ssf t}^{\vphantom{0}}
u(t,x)\ssf\|_{L_t^pL_x^q\vphantom{L_x^{\tilde{q}}}}\\
$\hfill}\vspace{1mm}

\centerline{\hspace{10mm}$
\lesssim\ssf\|\ssf D_{\,x}^{1/2}\ssf f(x)\ssf\|_{
L_x^2\vphantom{L_x^{\tilde{q}}}}
+\ssf\|\ssf D_{\,x}^{-1/2}\ssf g(x)\ssf\|_{
L_x^2\vphantom{L_x^{\tilde{q}}}}
+\ssf\|\ssf\tilde{D}_{\,x}^{\ssf\tilde{\sigma}-1/2}\ssf
F(t,x)\,\|_{L_t^{\tilde{p}'}\!L_x^{\tilde{q}'}}\ssf.
$}
\end{corollary}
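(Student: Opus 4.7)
The plan is to reduce a general couple in the square \eqref{square} to an admissible couple on the edge of the triangle \eqref{triangle}, using the Sobolev embedding on $\Hn$ in the space variable. When both $(p,q)$ and $(\tilde{p},\tilde{q})$ are already admissible, the formula for $\sigma(p,q)$ collapses to $\frac{n+1}2(\frac12-\frac1q)$ and the statement is exactly Theorem \ref{StrichartzND}. So the only work concerns the strip of the square where $\frac1p<\frac{n-1}2(\frac12-\frac1q)$, and symmetrically for $(\tilde p,\tilde q)$.

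For a non--admissible output couple $(p,q)$ I would introduce $q_0\in[2,\infty)$ defined by $\frac1p=\frac{n-1}2(\frac12-\frac1{q_0})$, so that $(p,q_0)$ lies on the lower edge of the admissibility triangle and $q_0<q$. (When $p=\infty$ this gives $q_0=2$, which is the endpoint $(\infty,2)$ already handled by Theorem \ref{StrichartzND}.) Setting $\sigma_0=\frac{n+1}2(\frac12-\frac1{q_0})$, Theorem \ref{StrichartzND} applied to the admissible couple $(p,q_0)$ yields \eqref{StrichartzEstimate2} with $(q,\sigma)$ replaced by $(q_0,\sigma_0)$. I would then upgrade the $L^p_t L^{q_0}_x$ bound to an $L^p_t L^q_x$ bound on $u$ and $\partial_t u$ by invoking the Sobolev embedding $H^{-\sigma_0,q_0}(\Hn)\hookrightarrow H^{-\sigma,q}(\Hn)$ on $\Hn$, which is available as soon as $q_0\le q$ and $-\sigma_0-\frac n{q_0}\ge-\sigma-\frac nq$. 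The latter rearranges to $\sigma\ge\sigma_0+n(\frac1{q_0}-\frac1q)$; using the defining relation for $q_0$ one checks, by a short algebraic manipulation, that this threshold is exactly $\sigma\ge n(\frac12-\frac1q)-\frac1p=\sigma(p,q)$.

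For a non--admissible input couple $(\tilde p,\tilde q)$ I would perform the dual construction: pick $\tilde q_0$ via $\frac1{\tilde p}=\frac{n-1}2(\frac12-\frac1{\tilde q_0})$ and set $\tilde\sigma_0=\frac{n+1}2(\frac12-\frac1{\tilde q_0})$. Since $\tilde q_0<\tilde q$ we have $\tilde q'<\tilde q_0'$, so the Sobolev embedding $H^{\tilde\sigma,\tilde q'}(\Hn)\hookrightarrow H^{\tilde\sigma_0,\tilde q_0'}(\Hn)$ holds under $\tilde\sigma-\frac n{\tilde q'}\ge\tilde\sigma_0-\frac n{\tilde q_0'}$; the same algebra as on the output side identifies this threshold with $\tilde\sigma\ge\sigma(\tilde p,\tilde q)$. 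Composing this embedding with Theorem \ref{StrichartzND} applied to the admissible couple $(\tilde p,\tilde q_0)$ bounds the $L^{\tilde p'}_t H^{\tilde\sigma_0,\tilde q_0'}_x$ norm of $F$ that the theorem requires by the $L^{\tilde p'}_t H^{\tilde\sigma,\tilde q'}_x$ norm of $F$ that the corollary provides.

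Assembling the output--side embedding and the input--side embedding in all four combinations (admissible or not, for each of the two couples) with Theorem \ref{StrichartzND} yields \eqref{StrichartzEstimate2}, hence \eqref{StrichartzEstimate1}, in full generality. The only non--bookkeeping ingredient is the Sobolev embedding on $\Hn$ at the relevant exponents; I expect this to be recorded in Appendix B together with the definition of $H^{\sigma,q}(\Hn)$, so modulo that standard input the argument is really a reduction to the already--proved admissible case.
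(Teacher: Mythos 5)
Your argument is correct and is essentially the paper's own proof: the paper introduces the same admissible shadow exponent (called $Q$ there, defined by $\frac1p=\frac{n-1}2(\frac12-\frac1Q)$, your $q_0$), applies Theorem~\ref{StrichartzND} to $(p,Q)$ and $(\tilde p,\tilde Q)$, and then uses the Sobolev embedding of Proposition B.1 on both sides exactly as you do, with the same algebra identifying $\Sigma+n(\frac1Q-\frac1q)=\sigma(p,q)$.
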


\begin{figure}[ht]
\begin{center}
\psfrag{0}[c]{$0$}
\psfrag{1}[c]{$1$}
\psfrag{1/2}[c]{$\frac12$}
\psfrag{1/2-1/(n-1)}[c]{$\frac12\!-\!\frac1{n-1}$}
\psfrag{1/p}[c]{$\frac1p$}
\psfrag{1/q}[c]{$\frac1q$}
\includegraphics[width=7cm]{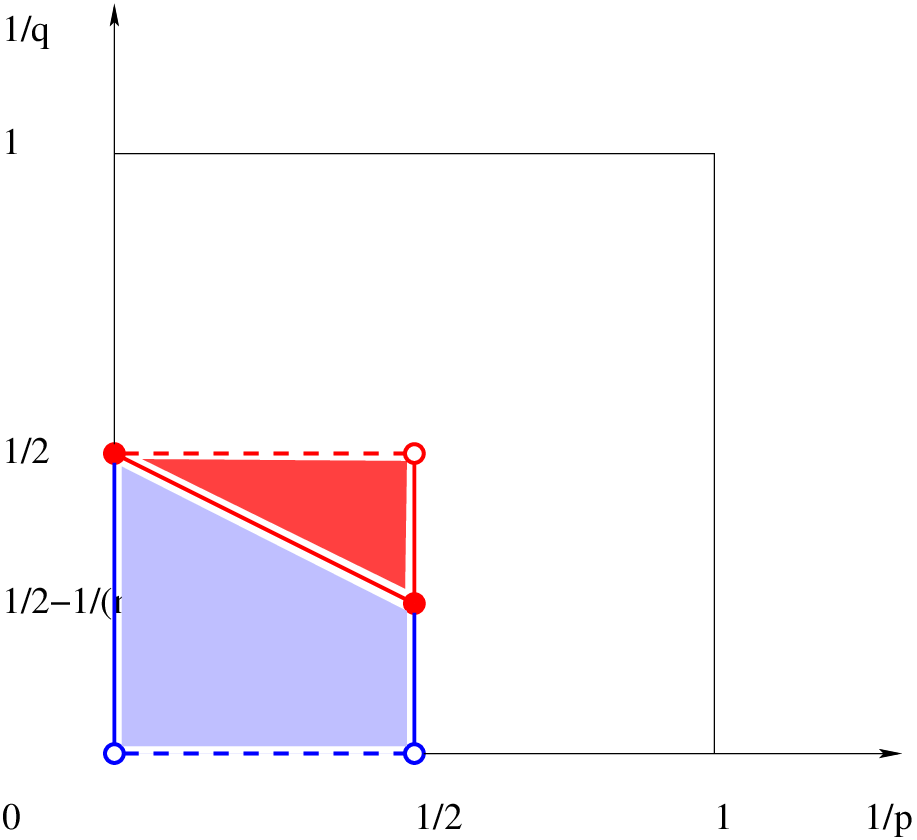}
\end{center}
\caption{Case \ssf$n\!\ge\!4$}
\end{figure}

\begin{proof}
We may retrict to the limit cases \ssf$\sigma\ssb=\ssb\sigma(p,q)$
\ssf and \ssf$\tilde{\sigma}\ssb=\ssb\sigma(\tilde{p},\tilde{q})$\ssf.
Define \ssf$Q$ \ssf by
\begin{equation*}\textstyle
\frac1Q=\ssf\begin{cases}
\ssf\frac1q
&\text{if \;}\frac1p\ssb
\ge\ssb\frac{n-1}2\ssf(\frac12\ssb-\ssb\frac1q)\ssf,\\
\ssf\frac12\ssb-\ssb\frac2{n-1}\ssf\frac1p
&\text{if \;}\frac1p\ssb
\le\ssb\frac{n-1}2\ssf(\frac12\ssb-\ssb\frac1q)\ssf.\\
\end{cases}\end{equation*}
and similarly \ssf$\tilde{Q}$\ssf.
Since $(p,Q)$ and $(\tilde{p},\tilde{Q})$ are admissible couples,
it follows from Theorem \ref{StrichartzND}
and more precisely from \eqref{StrichartzEstimate2} that
\begin{equation}\label{StrichartzEstimate3}\begin{aligned}
&\bigl\|\ssf\tilde{D}_{x\vphantom{|}}^{-\Sigma+1/2}\ssf u(t,x)\ssf\bigr\|_{
L_{t\vphantom{|}}^{p\vphantom{|}}L_{x\vphantom{|}}^Q\vphantom{L_x^{\tilde{Q}}}}
+\,\bigl\|\ssf\tilde{D}_{x\vphantom{|}}^{-\tilde{\Sigma}-1/2}\ssf
\partial_{\ssf t\ssf}u(t,x)\ssf\bigr\|_{
L_{t\vphantom{|}}^{p\vphantom{|}}L_{x\vphantom{|}}^Q\vphantom{L_x^{\tilde{Q}}}}\\
&\lesssim\,\bigl\|\ssf D_{x\vphantom{|}}^{1/2}f(x)\ssf
\bigr\|_{L_{x\vphantom{|}}^{2\vphantom{|}}\vphantom{L_x^{\tilde{Q}}}}
+\,\bigl\|\ssf D_{x\vphantom{|}}^{-1/2}\ssf g\ssf(x)\ssf
\bigr\|_{L_{x\vphantom{|}}^{2\vphantom{|}}\vphantom{L_x^{\tilde{Q}}}}
+\,\bigl\|\ssf\tilde{D}_{x\vphantom{|}}^{\ssf\tilde{\Sigma}-1/2}\ssf F(t,x)\ssf
\bigr\|_{L_{t\vphantom{|}}^{\tilde{p}'\vphantom{|}}\!L_{x\vphantom{|}}^{\tilde{Q}'}}
\end{aligned}\end{equation}
where \ssf$\Sigma\ssb=\ssb\frac{n+1}2\ssf(\frac12\!-\!\frac1Q)$ \ssf and
\ssf$\tilde{\Sigma}\ssb=\ssb\frac{n+1}2\ssf(\frac12\!-\!\frac1{\tilde{Q}})$\ssf.
Since \ssf$\sigma\!-\!\Sigma\ssb=\ssb n\,(\frac1Q\!-\!\frac1q)$\ssf,
we have
\begin{equation}\label{SobolevLeft}
\bigl\|\ssf\tilde{D}_{x\vphantom{|}}^{-\sigma+1/2}\ssf u(t,x)\ssf
\bigr\|_{L_{t\vphantom{|}}^{p\vphantom{|}}L_{x\vphantom{|}}^{q\vphantom{|}}}
\lesssim\,\bigl\|\ssf\tilde{D}_{x\vphantom{|}}^{-\Sigma+1/2}\ssf u(t,x)\ssf
\bigr\|_{L_{t\vphantom{|}}^{p\vphantom{|}}L_{x\vphantom{|}}^{Q\vphantom{|}}\vphantom{L_x^Q}}
\end{equation}
according to the Sobolev embedding theorem (Proposition B.1).
Similarly,
\begin{equation}\label{SobolevRight}
\bigl\|\ssf\tilde{D}_{x\vphantom{|}}^{\ssf\tilde{\Sigma}-1/2}\ssf F(t,x)\ssf
\bigr\|_{L_{t\vphantom{|}}^{\tilde{p}'\vphantom{|}}\!L_{x\vphantom{|}}^{\tilde{Q}'}}
\lesssim\,\bigl\|\ssf\tilde{D}_{x\vphantom{|}}^{\ssf\tilde{\sigma}-1/2}\ssf F(t,x)\ssf
\bigr\|_{L_{t\vphantom{|}}^{\tilde{p}'\vphantom{|}}\!L_{x\vphantom{|}}^{\tilde{q}}}\,.
\end{equation}
We conclude by combining
\eqref{StrichartzEstimate3}, \eqref{SobolevLeft}, \eqref{SobolevRight}, 
\end{proof}

\begin{remark}\label{Strichartz3D}
Theorem \ref{StrichartzND} and Corollary \ref{GeneralizedStrichartzND}
hold true in dimension \,$n\!=\!3$ with the same proofs.
Notice that the endpoint \,$(p,q)\!=\!(2,\infty)$ is excluded.
These results hold in particular for the 3D wave equation \eqref{nonshiftedWave}
and include the Strichartz estimates obtained
by Metcalfe and Taylor \cite[Section 4]{MT}
in the smaller region
\begin{equation*}\textstyle
\bigl\{\ssf\bigl(\frac1p,\frac1q\bigr)\!\in\!
\bigl[\ssf0,\frac12\ssf\bigr]\!\times\!\bigl(\ssf0,\frac12\ssf\bigr]\bigm|
\frac1p\ssb\le\ssb3\ssf\bigl(\frac12\!-\!\frac1q\bigr)\ssf\bigr\}
\smallsetminus\bigl\{\bigl(\frac12,\frac13\bigr)\bigr\}\,.
\end{equation*}
\end{remark}

\begin{figure}[ht]
\begin{center}
\psfrag{0}[c]{$0$}
\psfrag{1}[c]{$1$}
\psfrag{1/2}[c]{$\frac12$}
\psfrag{1/p}[c]{$\frac1p$}
\psfrag{1/q}[c]{$\frac1q$}
\includegraphics[width=7cm]{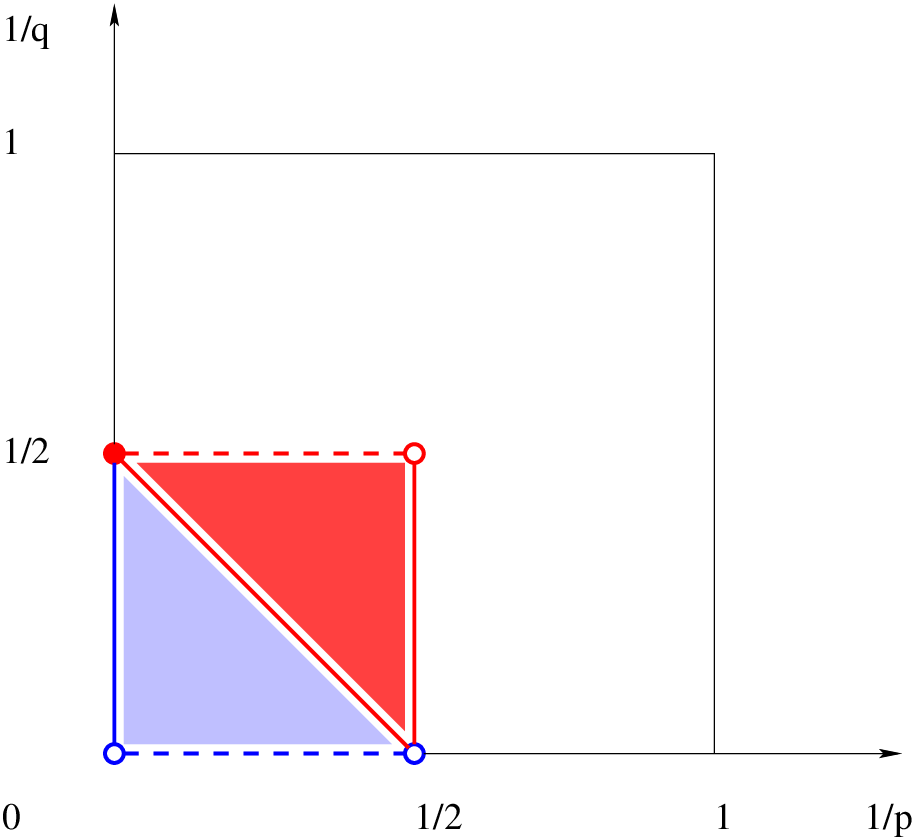}
\end{center}
\caption{Case \ssf$n\!=\!3$}
\end{figure}

\begin{remark}\label{dim2end}
The analysis carried out in this section still holds in dimension \,$n\!=\!2$\ssf,
except for the first convolution kernel in \eqref{HLS},
which becomes
\begin{equation*}
|\ssf t\ssb-\ssb s\ssf|^{-\alpha}\,
(\ssf1\!-\ssb\log|\ssf t\ssb-\ssb s\ssf|\ssf)^{\ssf\beta}\,
{\1}_{\,\{\ssf0<|\ssf t-s\ssf|<1\ssf\}}\,,
\end{equation*}
with \,$\alpha\ssb=\ssb\frac12\ssb-\ssb\frac1q$
and \,$\beta\ssb=\ssb2\,(\frac12\ssb-\ssb\frac1q)$\ssf.
Consequently,
the admissibility region in Theorem \ref{StrichartzND} becomes
\begin{equation*}\textstyle
\bigl\{\ssf\bigl(\frac1p,\frac1q\bigr)\!\in\!
\bigl(\ssf0,\frac12\ssf\bigr]\!\times\!\bigl(0,\frac12\ssf\bigr)
\bigm|\frac1p\!>\!\frac12\bigl(\frac12\!-\!\frac1q\bigr)\ssf\bigr\}
\cup\bigl\{\ssb\bigl(0,\frac12\bigr)\ssb\bigr\}
\end{equation*}
and the inequality \,$\sigma\!\ge\!\sigma(p,q)$\ssf,
resp. $\tilde{\sigma}\!\ge\!\sigma(\tilde{p},\tilde{q})$
in Corollary \ref{GeneralizedStrichartzND}
becomes strict in the triangle
\begin{equation*}\textstyle
\bigl\{\ssf\bigl(\frac1p,\frac1q\bigr)\!\in\!
\bigl(\ssf0,\frac14\ssf\bigr)\!\times\!\bigl(\ssf0,\frac12\ssf\bigr)
\bigm|\frac1p\!\le\!\frac12\bigl(\frac12\!-\!\frac1q\bigr)\ssf\bigr\}\,.
\end{equation*}
\end{remark}

\begin{figure}[ht]
\begin{center}
\psfrag{0}[c]{$0$}
\psfrag{1}[c]{$1$}
\psfrag{1/2}[c]{$\frac12$}
\psfrag{1/4}[c]{$\frac14$}
\psfrag{1/p}[c]{$\frac1p$}
\psfrag{1/q}[c]{$\frac1q$}
\includegraphics[width=7cm]{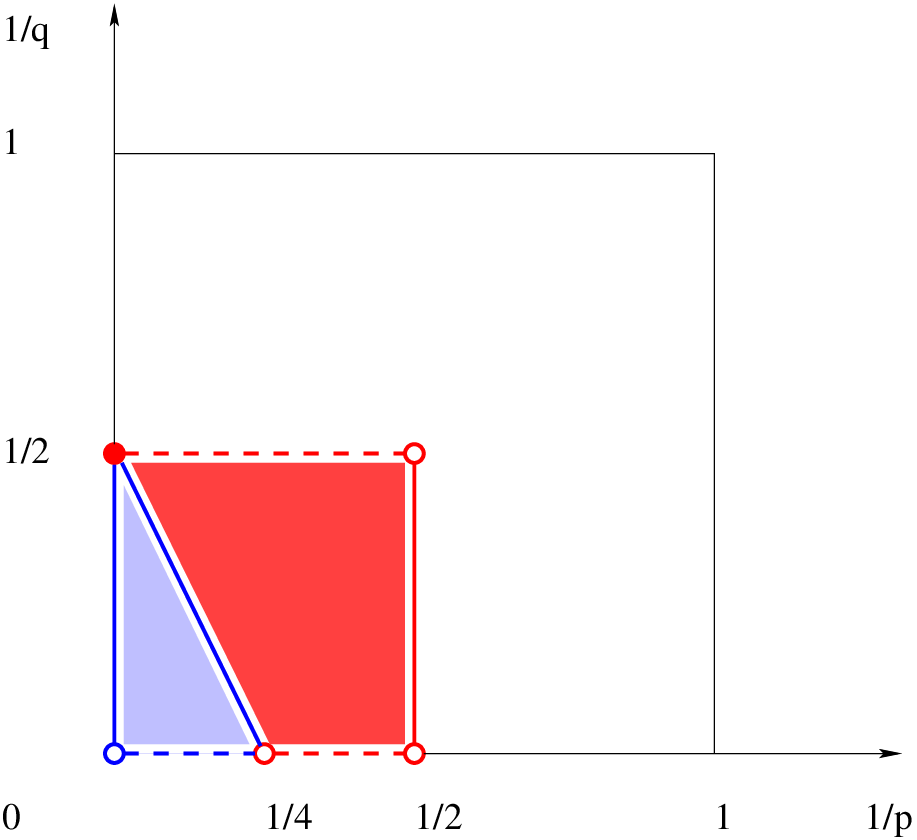}
\end{center}
\caption{Case \ssf$n\!=\!2$}
\end{figure}

\section{Global well--posedness in $L^p(\R\ssf,L^q(\Hn))$}
\label{GWP}

In this section, following the classical fixed point scheme,
we use the Strichartz estimates obtained in Section \ref{Strichartz}
to prove global well--posedness for the semilinear equation
\begin{equation}\label{NLKG}
\begin{cases}
\;\partial_{\,t}^{\,2\ssf}u(t,x)+D_x^{\ssf2\ssf}u(t,x)=F(u(t,x))\\
\;u(0,x)=f(x)\ssf,\;\partial_{\ssf t}|_{t=0}\,u(t,x)=g(x)
\end{cases}
\end{equation}
on \ssf$\Hn$ with power--like nonlinearities
\begin{equation*}
F(u)\sim|u|^{\gamma}
\qquad(\gamma\!>\!1)
\end{equation*}
and small initial data \ssf$f$ and \ssf$g$\ssf.
We assume \ssf$n\!\ge\!3$ \ssf throughout the section
and discuss the 2--dimensional case in the final remark.
The statement and proof of our result involve the following powers
\begin{equation}\label{powers}\begin{gathered}\textstyle
\gammaone\ssb=\ssb1\ssb+\ssb\frac3n\,,\quad
\gammatwo\ssb=\ssb1\ssb
+\ssb\frac{2\vphantom{\frac12}}{\frac{n-1}2+\frac2{n-1}}\,,\quad
\gammaconf\ssb=\ssb1\ssb+\ssb\frac4{n-1}\,,\\
\gammathree=\begin{cases}
\;\frac{\frac{n+6}2+\frac2{n-1}
+\sqrt{\ssf4\ssf n\ssf+\ssf(\frac{6-n}2+\frac2{n-1})^2\ssf}}
{n\vphantom{\frac12}}
&\text{if \,}n\ssb\le\ssb5\ssf,\\
\;1\ssb+\ssb\frac{2\vphantom{\frac12}}{\frac{n-1}2-\frac1{n-1}}
&\text{if \,}n\ssb\ge\ssb6\ssf,\\
\end{cases}\\
\gammafour=\begin{cases}
\;1\ssb+\ssb\frac4{n-2}
&\text{if \,}n\ssb\le\ssb5\ssf,\\
\,\frac{n-1}2\ssb+\ssb\frac3{n+1}\ssb
-\ssb\sqrt{\bigl(\frac{n-3}2\ssb+\ssb\frac3{n+1}\bigr)^2\!
-4\,\frac{n-1}{n+1}\ssf}
&\text{if \,}n\ssb\ge\ssb6\ssf,\\
\end{cases}
\end{gathered}\end{equation}
and the following curves
\begin{equation}\label{curves}\begin{gathered}\textstyle
\sigma_1(\gamma)\ssb=\ssb\frac{n+1}4\ssb
-\ssb\frac{(n+1)\ssf(n+5)}{8\ssf n}\ssf
\frac1{\gamma\ssf-\ssf\frac{n+1}{2\ssf n}}\,,\quad
\sigma_2(\gamma)\ssb=\ssb\frac{n+1}4\ssb-\ssb\frac1{\gamma-1}\,,\quad
\sigma_3(\gamma)\ssb=\ssb\frac n2\ssb-\ssb\frac2{\gamma-1}\,.
\end{gathered}\end{equation}
\smallskip

\begin{center}
\begin{tabular}{|c|c|c|c|c|c|}
\hline
$\hphantom{........}n\hphantom{.........}\vphantom{\Big|}$
&$\hphantom{........}\gammaone\hphantom{........}$
&$\hphantom{........}\gammatwo\hphantom{........}$
&$\hphantom{......}\ssf\gammaconf\ssf\hphantom{......}$
&$\hphantom{........}\gammathree\hphantom{........}$
&$\hphantom{........}\gammafour\hphantom{........}$
\\\hline
$3\vphantom{\Big|}$
&$2$
&$2$
&$3$
&$\frac{11+\sqrt{73}}6\ssb\simeq\ssb3,\hspace{-.5mm}26$
&$5$
\\\hline
$4\vphantom{\Big|}$
&$\frac74\ssb=\ssb1,\hspace{-.6mm}75$
&$\frac{25}{13}\ssb\simeq\ssb1,\hspace{-.5mm}92$
&$\frac73\ssb\simeq\ssb2,\hspace{-.5mm}33$
&$\frac52\ssb\simeq\ssb2,\hspace{-.5mm}5$
&$3$
\\\hline
$5\vphantom{\Big|}$
&$\frac85\ssb\simeq\ssb1,\hspace{-.5mm}6$
&$\frac95\ssb\simeq\ssb1,\hspace{-.5mm}8$
&$2$
&$\frac{6\ssf+\sqrt{21}}5\ssb\simeq\ssb2,\hspace{-.6mm}12$
&$\frac73\ssb\simeq\ssb2,\hspace{-.5mm}33$
\\\hline
$6\vphantom{\Big|}$
&$\frac32\ssb=\ssb1,\hspace{-.5mm}5$
&$\frac{49}{29}\ssb\simeq\ssb1,\hspace{-.5mm}69$
&$\frac95\ssb=\ssb1,\hspace{-.5mm}8$
&$\frac{43}{23}\ssb\simeq\ssb1,\hspace{-.5mm}87$
&$2$
\\\hline
$\ge\!7\vphantom{\Big|}$
&$<\ssb\gammatwo$
&$<\ssb\gammaconf$
&$<\ssb\gammathree$
&$<\ssb\gammafour$
&$<\ssb2$
\\\hline
\end{tabular}
\end{center}
\medskip

The powers $\gammaone$, $\gammatwo$\ssf, $\gammaconf$
and the curves \ssf$C_1$, $C_2$\ssf, $C_3$
parametrized by $\sigma_1$, $\sigma_2$\ssf, $\sigma_3$
occur already in the Euclidean setting.
More precisely, they are involved in the conditions,
illustrated in Figure \ref{LocalRegularityEuclidean},
of minimal regularity $\sigma$ on the initial data $f$\ssb, $g$
which are needed in order to ensure local well--posedness of \eqref{NLKG}.
We refer again to \cite{Kap, LS, KT} for more details.
Notice that, in dimension $n\!=\!3$\ssf,
\ssf$\gammaone$ coincides with \ssf$\gammatwo$
and there is no curve \ssf$C_1$.

\begin{figure}[ht]
\begin{center}
\psfrag{0}[c]{$0$}
\psfrag{1}[c]{$1$}
\psfrag{1/2}[c]{$\frac12$}
\psfrag{n/2}[c]{$\frac n2$}
\psfrag{C1}[c]{\color{red}$C_1$}
\psfrag{C2}[c]{\color{red}$C_2$}
\psfrag{C3}[c]{\color{red}$C_3$}
\psfrag{gamma}[c]{$\gamma$}
\psfrag{gamma1}[c]{$\gammaone$}
\psfrag{gamma2}[c]{$\gammatwo$}
\psfrag{gammaconf}[c]{$\gammaconf$}
\psfrag{sigma}[c]{$\sigma$}
\includegraphics[width=120mm]{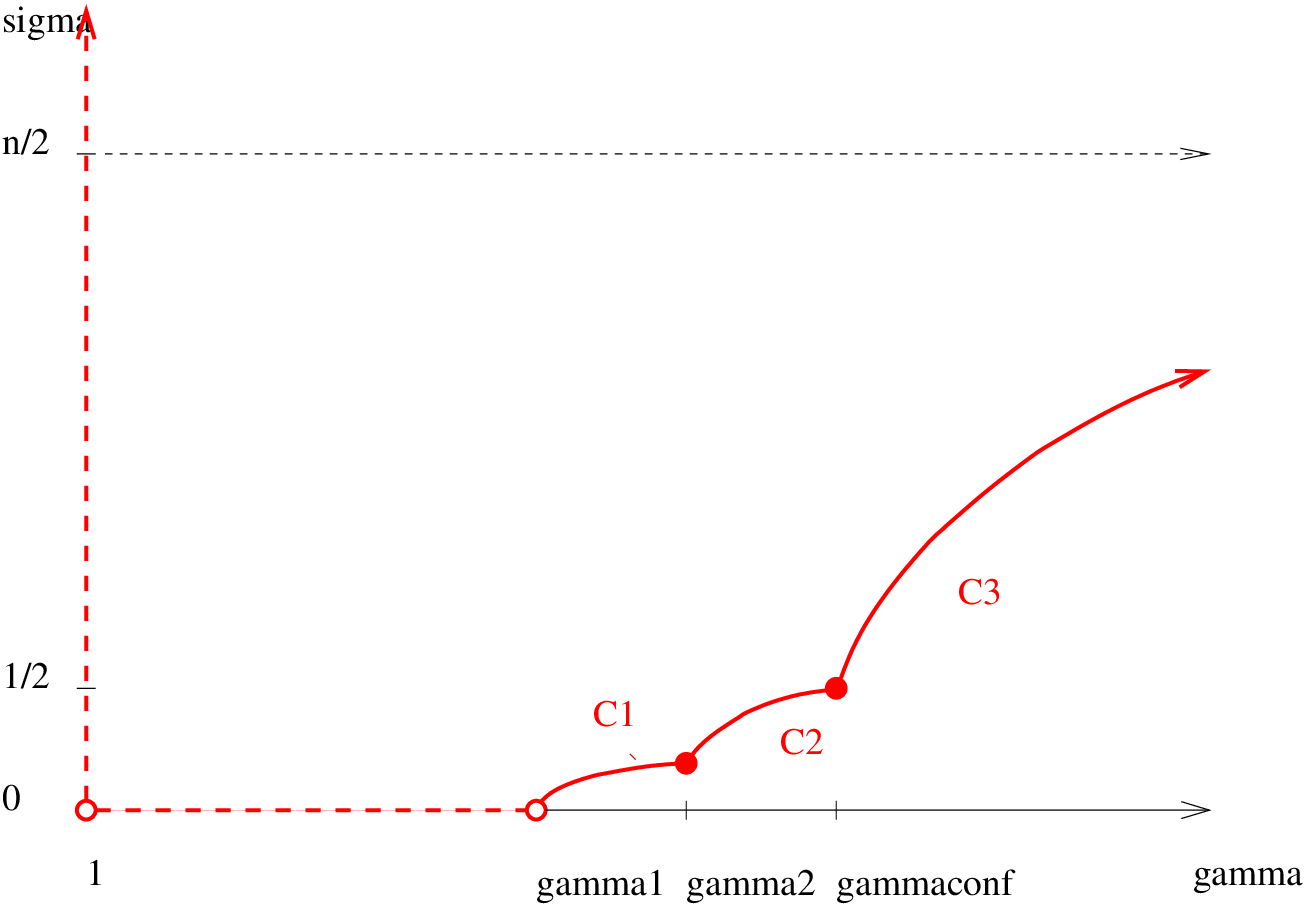}
\caption{Regularity for LWP on $\Rn$ in dimension $n\!\ge\!3$}
\label{LocalRegularityEuclidean}
\end{center}
\end{figure}

As mentionned in the introduction,
global well--posedness of \eqref{NLKG} on \ssf$\Rn$ requires additional conditions.
Recall that smooth solutions with small amplitude blow up or not
depending whether $\gamma$ is smaller or larger than
the critical power $\gammazero$ defined in \eqref{strauss}.

In Section \ref{Strichartz}
we have obtained Strichartz estimates on \ssf$\Hn$
for a range of admissible couples which is wider than on \ssf$\Rn$.
As a consequence, we deduce in this section
stronger well--posedness results for \eqref{NLKG}.
In particular, we prove global well--posedness
for small initial data in \ssf$H^\sigma(\Hn)\!\times\!H^{\sigma-1}(\Hn)$\ssf,
if \ssf$1\!<\!\gamma\!<\!\gammaone$ \ssf and \ssf$\sigma\!>\!0$ \ssf is small.
Thus there is no blow--up for small powers \ssf$\gamma\!>\!1$ \ssf on \ssf$\Hn$,
in sharp contrast with \ssf$\Rn$. 

\begin{theorem}\label{GWPND}
Assume that the nonlinearity \,$F$ satisfies
\begin{equation}\label{nonlinearity}
|F(u)|\le C\,|u|^\gamma,\quad 
|\ssf F(u)\!-\!F(v)\ssf|\le
C\,(\ssf|u|^{\gamma-1}\!+\ssb|v|^{\gamma-1}\ssf)\,|\ssf u\!-\!v\ssf|\,.
\end{equation}
Then, in dimension \,$n\!\ge\!3$\ssf,
the equation \eqref{NLKG} is globally well--posed
for small initial data in \,$H^\sigma(\Hn)\ssb\times\ssb H^{\sigma-1}(\Hn)$
provided
\begin{equation}\label{RegularityND}\begin{cases}
\;\sigma\ssb=\ssb0^+
&\text{if \;}1\ssb<\ssb\gamma\ssb\le\ssb\gammaone\ssf,\\
\;\sigma\ssb=\ssb\sigma_1(\gamma)
&\text{if \;}\gammaone\ssb<\ssb\gamma\ssb\le\gammatwo\ssf,\\
\;\sigma\ssb=\ssb\sigma_2(\gamma)
&\text{if \;}\gammatwo\ssb\le\ssb\gamma\ssb\le\gammaconf\,,\\
\;\sigma\ssb=\ssb\sigma_3(\gamma)
&\text{if \;}\gammaconf\ssb\le\ssb\gamma\ssb\le\gammafour\ssf,\\
\end{cases}\end{equation}
where \,$\sigma\ssb=\ssb0^+$ stands for
any \,$\sigma\!>\!0$ sufficiently close to \ssf$0$\ssf.
More precisely, in each case, there exist
\,$2\!\le\!p,q\!<\!\infty$ and \,$\delta,\varepsilon\!>\!0$
such that, for any initial data
\ssf$(f,g)\ssb\in\ssb H^\sigma(\Hn)\ssb\times\ssb H^{\sigma-1}(\Hn)$
with norm $\le\ssb\delta$,
the Cauchy problem \eqref{NLKG}
has a unique solution \,$u$ with norm $\le\ssb\varepsilon$
in the Banach space
\begin{equation*}
X=C\ssf(\R\ssf;\ssb H^{\sigma}({\mathbb{H}^n}))\ssf
\cap C^1(\R\ssf;\ssb H^{\sigma-1}({\mathbb{H}^n}))\ssf
\cap L^p(\R\ssf;\ssb L^q({\mathbb{H}^n}))\ssf.
\end{equation*}
\end{theorem}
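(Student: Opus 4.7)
The plan is to apply the contraction mapping principle to the Duhamel operator
\begin{equation*}
\Phi(u)(t,x)\ssf=\ssf(\cos t\ssf D_x)\ssf f(x)\ssb
+\ssb\tfrac{\sin t\ssf D_x}{D_x}\ssf g(x)\ssb
+\ssb\int_{\,0}^{\,t}\!ds\,\tfrac{\sin(t-s)D_x}{D_x}\ssf F(u(s,x))
\end{equation*}
on a small ball of $X$. The argument reduces to two ingredients. The \emph{linear estimate}
\begin{equation*}
\|\Phi(u)\|_X\lesssim\ssf\|f\|_{H^\sigma}\ssb
+\ssb\|g\|_{H^{\sigma-1}}\ssb
+\ssb\|F(u)\|_{L^{\tilde p'}\!H^{\tilde\sigma,\tilde q'}}\ssf
\end{equation*}
is obtained by combining Corollary \ref{GeneralizedStrichartzND} for the $L^pL^q$ component of $\|\cdot\|_X$ with elementary spectral theory on $L^2$ for the $C(\R;H^\sigma)\cap C^1(\R;H^{\sigma-1})$ components, since $\cos t\ssf D_x$ and $\tfrac{\sin t\ssf D_x}{D_x}$ commute with fractional powers of $\widetilde{D}$ and are essentially unitary on $L^2$. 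The \emph{nonlinear estimate}
\begin{equation*}
\|F(u)\|_{L^{\tilde p'}\!H^{\tilde\sigma,\tilde q'}}\lesssim\ssf\|u\|_X^\gamma
\end{equation*}
and its Lipschitz counterpart must then be established from \eqref{nonlinearity}.

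For the nonlinear estimate I would first use $|F(u)|\!\le\!C|u|^\gamma$ and H\"older's inequality in space-time to get
\begin{equation*}
\|F(u)\|_{L^{\tilde p'}\!L^{\tilde q'}}
\lesssim\ssf\|u\|_{L^{\gamma\tilde p'}\ssb L^{\gamma\tilde q'}}^\gamma\ssf,
\end{equation*}
then trade regularity for integrability via the Sobolev embedding Proposition B.1 on $\Hn$ to control the right-hand side by $\|u\|_X^\gamma$. In time, the elementary equality $L^p(\R)\!=\!L^{\gamma\tilde p'}(\R)$ forces the exact scaling $\gamma\tilde p'\!=\!p$; in space, the Sobolev step uses $\sigma\ssb-\ssb\tilde\sigma\!\ge\!n\bigl(\tfrac1q\!-\!\tfrac1{\gamma\tilde q'}\bigr)$ at a minimum. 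When $\tilde\sigma\!>\!0$ an additional fractional chain-rule estimate in the Sobolev scale on $\Hn$ is invoked on top of this.

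For each of the four branches listed in \eqref{RegularityND}, the scheme is to select Strichartz couples $(p,q),(\tilde p,\tilde q)$ and regularity indices $\sigma,\tilde\sigma$ satisfying simultaneously admissibility \eqref{triangle} for both couples, the H\"older scaling $\tfrac\gamma p\!=\!\tfrac1{\tilde p'}$ and $\tfrac\gamma q\!=\!\tfrac1{\tilde q'}$ (modulo the Sobolev trade in space), and the Strichartz thresholds $\sigma\!\ge\!\sigma(p,q)$ and $\tilde\sigma\!\ge\!\sigma(\tilde p,\tilde q)$. Minimizing $\sigma$ over this feasibility set yields the four regimes, the transition powers $\gammaone,\gammatwo,\gammaconf,\gammafour$ marking changes in which constraint binds, and the three curves $\sigma_1,\sigma_2,\sigma_3$ coinciding with the familiar thresholds from the Euclidean local theory \cite{Kap, LS, KT} (cf. Figure \ref{LocalRegularityEuclidean}).

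The main obstacle, and the most striking feature of the theorem, is the low-power regime $1\!<\!\gamma\!\le\!\gammaone$, where arbitrarily small $\sigma\!>\!0$ suffices even though Strauss's conjecture \eqref{strauss} forbids any such conclusion on $\Rn$. This regime is reachable only because the admissibility triangle \eqref{triangle} on $\Hn$ is strictly larger than the admissibility line \eqref{EuclideanAdmissibility} on $\Rn$: it contains all couples with $\tfrac1p\!>\!\tfrac{n-1}2\bigl(\tfrac12\!-\!\tfrac1q\bigr)$ and the whole edge $\bigl\{\bigl(0,\tfrac12\bigr)\bigr\}$, which by Corollary \ref{DispersiveGlobal} correspond to integrable $|t|^{-3/2}$ large-time decay of the half-wave propagator. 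This global-in-time decay is what ultimately absorbs the nonlinear term for small data without imposing any threshold on $\gamma$ above $1$. Verifying compatibility of all parameter choices in each of the four regimes, and propagating fractional derivatives through $F$ via a chain rule on $\Hn$ in the regimes where $\tilde\sigma\!>\!0$, are the delicate technical points.
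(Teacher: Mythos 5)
Your overall scheme — fixed point in $X$, Strichartz for the linear part, H\"older plus Sobolev embedding for the nonlinearity, and a case-by-case optimization of the exponents — is the right one and matches the paper's strategy. However, there is a genuine conceptual gap in how you propose to pass the Sobolev derivative through $F(u)$, and it is precisely the point where the paper's argument works cleanly and yours would get stuck.

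The source term in the Strichartz estimate must be controlled in $L^{\tilde p'}\ssb H^{\tilde\sigma,\tilde q'}$ with $\tilde\sigma\!\ge\!0$; after shifting the regularity to $H^\sigma\!\times\!H^{\sigma-1}$ data this becomes $\|F(v)\|_{L_t^{\tilde p'}\!H_x^{\sigma+\tilde\sigma-1,\tilde q'}}$. The paper imposes the extra constraint $\sigma\ssb+\ssb\tilde\sigma\ssb\le\ssb1$ (condition (\ref{conditions1}.b)), so that the order $\sigma+\tilde\sigma-1$ of this Sobolev norm is \emph{nonpositive}. Since $F(v)$ sits on the \emph{source} side, a negative-order Sobolev norm is \emph{weaker}, and Proposition B.1 applied in the direction $L^{\tilde Q'}\!\subset\!H^{\sigma+\tilde\sigma-1,\tilde q'}$ (valid when $\tfrac n{\tilde Q'}\!-\!\tfrac n{\tilde q'}\!\le\!1\!-\!\sigma\!-\!\tilde\sigma$) bounds it by a pure Lebesgue norm $\|F(v)\|_{L^{\tilde Q'}}$. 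Only \emph{then} does the pointwise bound $|F(v)|\!\le\!C|v|^\gamma$ and H\"older give $\|v\|_{L^{\gamma\tilde Q'}}^\gamma$. So no fractional chain rule is ever needed: the point is not whether $\tilde\sigma\!>\!0$ (it almost always is), but whether $\sigma\!+\!\tilde\sigma\!-\!1\!\le\!0$. Your statement ``when $\tilde\sigma\!>\!0$ an additional fractional chain-rule estimate in the Sobolev scale on $\Hn$ is invoked'' points at a tool the paper neither proves nor cites, and inserting it would make the scheme much harder (and, in fact, unnecessary).

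Related to this, your Sobolev inequality ``$\sigma\!-\!\tilde\sigma\!\ge\!n\bigl(\tfrac1q\!-\!\tfrac1{\gamma\tilde q'}\bigr)$'' is not the one that appears: the correct condition is $\tfrac n{\tilde Q'}\!-\!\tfrac n{\tilde q'}\!\le\!1\!-\!\sigma\!-\!\tilde\sigma$ together with $\gamma\tilde Q'\!=\!q$, which is what produces the double-sided constraint $1\!\le\!\tfrac\gamma q\!+\!\tfrac1{\tilde q}\!\le\!1\!+\!\tfrac{1-\sigma-\tilde\sigma}n$ in (\ref{conditions1}.d). You also omit the auxiliary constraint $q\!>\!\gamma$ (condition (\ref{conditions1}.e), ensuring $\tilde Q'\!>\!1$), which is nontrivial in low dimensions. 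Finally, the optimization over the feasibility region is not merely ``select couples satisfying admissibility and scaling'': the paper reduces (\ref{conditions1}) to a small set of linear inequalities in $(\tfrac1q,\tfrac1{\tilde q})$, identifies the convex feasible region, and reads the minimal $\sigma$ off its boundary; for $\gamma\!>\!\gammaconf$ this requires switching from Theorem \ref{StrichartzND} to the extended Corollary \ref{GeneralizedStrichartzND}, with the regularity threshold becoming $\sigma(p,q)\!=\!n(\tfrac12\!-\!\tfrac1q)\!-\!\tfrac1p$. That part of your outline is qualitatively right but would need to be carried out explicitly, with the binding constraints identified in each regime, to justify the transition powers $\gammaone,\gammatwo,\gammaconf,\gammafour$ and the curves $\sigma_1,\sigma_2,\sigma_3$.
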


\begin{figure}[ht]
\begin{center}
\psfrag{0}[c]{$0$}
\psfrag{1}[c]{$1$}
\psfrag{1/2}[c]{$\frac12$}
\psfrag{n/2}[c]{$\frac n2$}
\psfrag{C1}[c]{\color{red}$C_1$}
\psfrag{C2}[c]{\color{red}$C_2$}
\psfrag{C3}[c]{\color{red}$C_3$}
\psfrag{gamma}[c]{$\gamma$}
\psfrag{gamma1}[c]{$\gammaone$}
\psfrag{gamma2}[c]{$\gammatwo$}
\psfrag{gamma3}[c]{$\gammafour$}
\psfrag{gammaconf}[c]{$\gammaconf$}
\psfrag{sigma}[c]{$\sigma$}
\includegraphics[width=120mm]{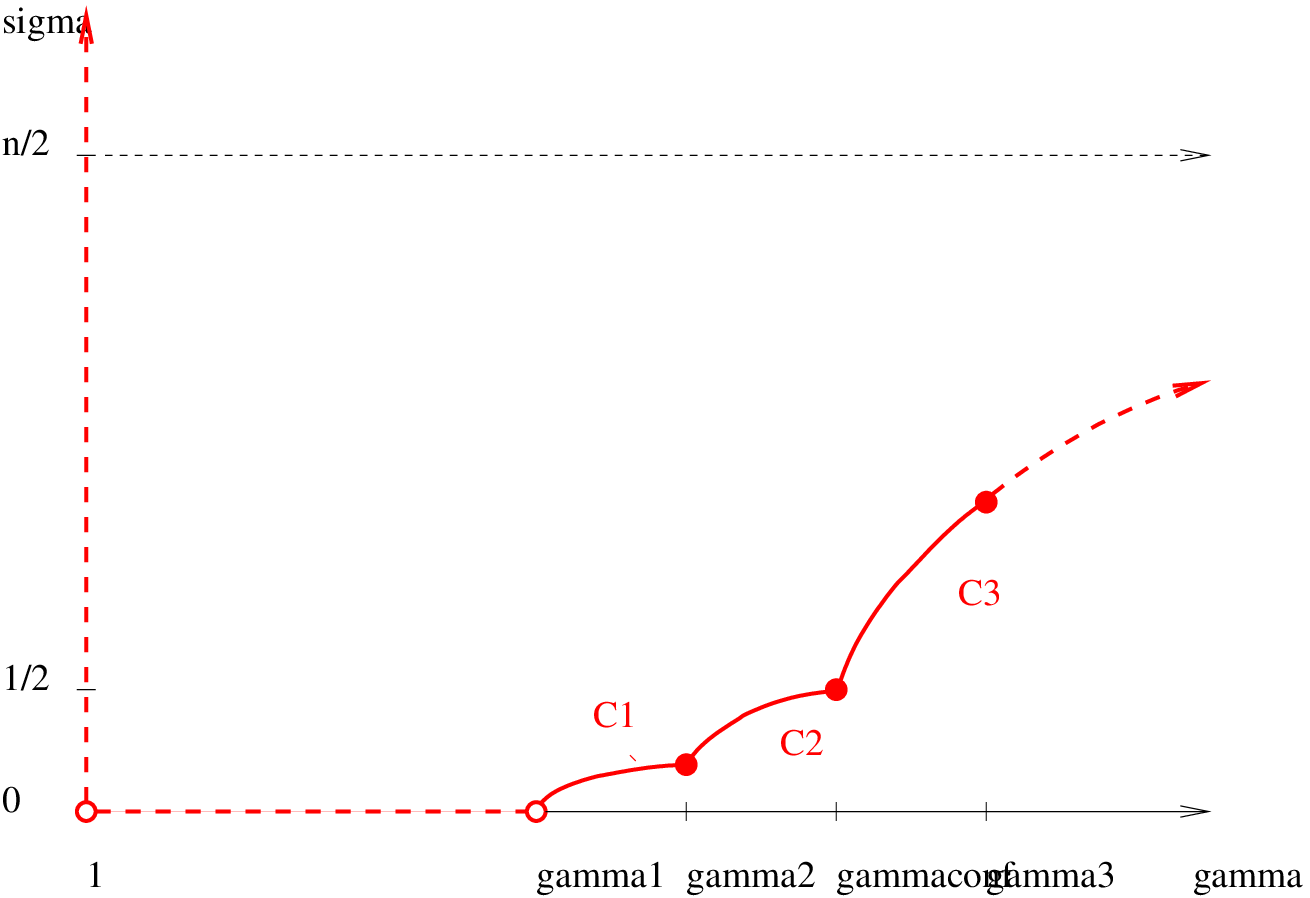}
\caption{Regularity for GWP on $\Hn$ in dimension $n\!\ge\!3$}
\label{GlobalRegularityn}
\end{center}
\end{figure}

\begin{remark}
In dimension \,$n\!=\!3$\ssf,
$\gammaone$ coincides with \ssf$\gammatwo$,
the second and third conditions in \eqref{RegularityND} boil down to
\vskip-.5mm

\centerline{
$\sigma\ssb\ge\ssb\sigma_2(\gamma)$\quad if
\;$\gammaone\!=\ssb\gammatwo\!<\ssb\gamma\ssb\le\ssb\gammaconf$
}\vskip.5mm

\noindent
and there is no curve \,$C_1$ in Figure \ref{GlobalRegularityn}.
\end{remark}

\begin{proof}[Proof of Theorem \ref{GWPND}
for \,$1\!<\!\gamma\ssb\le\!\gammaconf$\ssf]
We resume the fixed point method based on Strichartz estimates.
Define \ssf$u\ssb=\ssb\Phi(v)$
\ssf as the solution to the Cauchy problem
\begin{equation}
\begin{cases}
\;\partial_{\ssf t}^{\,2}u(t,x)-D_x^{\ssf2}\ssf u(t,x)=F(v(t,x))\ssf,\\
\;u(0,x)=f(x)\ssf,
\;\partial_t|_{t=0}\,u(t,x)=g(x)\ssf,
\end{cases}
\end{equation}
which is given by Duhamel's formula\,:
\begin{equation*}\textstyle
u(t,x)=(\cos t\ssf D_x)\ssf f(x)
+\frac{\sin t\ssf D_x}{D_x}\ssf g\ssf(x)
+\!{\displaystyle\int_{\,0}^{\,t}}\ssb ds\;
\frac{\sin\ssf(t-s)\ssf D_x}{D_x}\,F(s,x)\,.
\end{equation*}
On one hand,  according to Theorem \ref{StrichartzND},
the Strichartz estimate
\begin{align*}
&\|\ssf u(t,x)\ssf\|_{L_t^{\infty\vphantom{\tilde{p}'}}\ssb
H_{x\vphantom{t}}^{\sigma\vphantom{\tilde{p}'}}}
+\,\|\ssf\partial_{\ssf t}u(t,x)\ssf\|_{L_t^{\infty\vphantom{\tilde{p}'}}\ssb
H_{x\vphantom{t}}^{\sigma-1\vphantom{\tilde{p}'}}}
+\,\|\ssf u(t,x)\ssf\|_{L_t^{p\vphantom{\tilde{p}'}}
L_{x\vphantom{t}}^{q\vphantom{\tilde{p}'}}}\\
&\lesssim\,\|\ssf f(x)\ssf\|_{H_{x\vphantom{t}}^{\sigma\vphantom{\tilde{p}'}}}
+\,\|\,g(x)\ssf\|_{H_{x\vphantom{t}}^{\sigma-1\vphantom{\tilde{p}'}}}
+\,\|\ssf F(v(t,x))\ssf\|_{L_t^{\tilde{p}'}\hspace{-.75mm}
H_{x\vphantom{t}}^{\sigma+\tilde{\sigma}-1,\ssf\tilde{q}'}}
\end{align*}
holds whenever
\begin{equation*}
\begin{cases}
\,\text{$(p,q)$ \ssf and \ssf$(\tilde{p},\tilde{q})$
\ssf are admissible couples\,;}\\
\;\sigma\!\ge\!\frac{n+1}2\ssf\big(\frac12\!-\!\frac1{\tilde{q}}\bigr)
\text{ \ssf and \ssf}
\tilde{\sigma}\!\ge\!\frac{n+1}2\ssf\big(\frac12\!-\!\frac1{\tilde{q}}\bigr).
\end{cases}
\end{equation*}
On the other hand,
by our nonlinear assumption \eqref{nonlinearity}
and by the Sobolev embedding theorem (Theorem B.1),
we have
\begin{equation*}
\|\ssf F(v(t,x))\ssf\|_{L_t^{\tilde{p}'}\!
H_{x\vphantom{t}}^{\sigma+\tilde{\sigma}-1,\ssf\tilde{q}'}}
\!\lesssim\|\ssf|v(t,x)|^\gamma\|_{L_t^{\tilde{p}'}\!
H_{x\vphantom{t}}^{\sigma+\tilde{\sigma}-1,\tilde{q}'}}
\!\lesssim\|\ssf|v(t,x)|^\gamma
\|_{L_t^{\tilde{p}'}\!L_{x\vphantom{t}}^{\tilde{Q}'}}
\!\lesssim\|\ssf v(t,x)\ssf
\|_{L_t^{\gamma\tilde{p}'}\!L_{x\vphantom{t}}^{\gamma\tilde{Q}'}}^{\,\gamma}\ssf,
\end{equation*}
provided
\begin{equation}\label{condition2}\textstyle
\sigma\ssb+\ssb\tilde{\sigma}\ssb\le\ssb1\ssf,
\;1\ssb<\ssb\tilde{Q}'\!\le\ssb\tilde{q}'\!<\ssb\infty
\quad\text{and}\quad
\frac n{\tilde{Q}'}\ssb-\ssb\frac n{\tilde{q}'}\ssb
\le\ssb1\!-\ssb\sigma\ssb-\ssb\tilde{\sigma}\,.
\end{equation}
In order to remain within the same function space,
we require in addition that
\begin{equation*}
\gamma\,\tilde{p}^{\,\prime}\!=p
\quad\text{and}\quad
\gamma\,\tilde{Q}^{\,\prime}\!=q\ssf.
\end{equation*}
In summary,
\begin{equation}\begin{aligned}\label{FixedPoint1}
&\|\ssf u(t,x)\ssf\|_{L_{\ssf t}^{\infty\vphantom{\tilde{p}'}}\ssb
H_{x\vphantom{t}}^{\sigma\vphantom{\tilde{p}'}}}
+\,\|\ssf\partial_{\ssf t}u(t,x)\ssf
\|_{L_{\ssf t}^{\infty\vphantom{\tilde{p}'}}\ssb
H_{\ssf x\vphantom{t}}^{\sigma-1\vphantom{\tilde{p}'}}}
+\,\|\ssf u(t,x)\ssf\|_{L_t^{p\vphantom{\tilde{p}'}}
L_{x\vphantom{t}}^{q\vphantom{\tilde{p}'}}}\\
&\le\,C\;\Bigl\{\,
\|\ssf f(x)\ssf\|_{H_{x\vphantom{t}}^{\sigma\vphantom{\tilde{p}'}}}
+\,\|\,g(x)\ssf\|_{H_{\ssf x\vphantom{t}}^{\sigma-1\vphantom{\tilde{p}'}}}
+\,\|v\|_{L_{\ssf t}^{p\vphantom{0}}
L_{\ssf x\vphantom{t}}^{q\vphantom{0}}}^{\,\gamma}\ssf\Bigr\}
\end{aligned}\end{equation}
if the following set of conditions is satisfied\,:
\begin{equation}\label{conditions1}\begin{cases}
\,\text{(a)}&\hspace{-2mm}
\text{$(p,q)$ \ssf and \ssf$(\tilde{p},\tilde{q})$
\ssf are admissible couples\,;}\\
\,\text{(b)}&\hspace{-2mm}
\sigma\!\ge\!\frac{n+1}2\ssf\big(\frac12\!-\!\frac1q\bigr)\ssf,
\;\tilde{\sigma}\!\ge\!\frac{n+1}2\ssf\big(\frac12\!-\!\frac1{\tilde{q}}\bigr)\ssf,
\;\sigma\ssb+\ssb\tilde{\sigma}\ssb\le\ssb1\,;\\
\,\text{(c)}&\hspace{-2mm}
\frac\gamma p\ssb+\ssb\frac1{\tilde{p}}\ssb=\ssb1\,;\\
\,\text{(d)}&\hspace{-2mm}
\text{$1\ssb\le\ssb\frac\gamma q\ssb+\ssb\frac1{\tilde{q}}\ssb
\le\ssb1\ssb+\ssb\frac{1-\sigma-\tilde{\sigma}}n$\,;}\\
\,\text{(e)}&\hspace{-2mm}
\text{$q\ssb>\ssb\gamma$\ssf.}\\
\end{cases}\end{equation}
For such a choice,
\ssf$\Phi$ \ssf maps the Banach space
\begin{equation*}
X=\ssf C\ssf(\R\ssf;\ssb H^\sigma(\Hn))\ssf\cap\ssf
C^1(\R\ssf;\ssb H^{\sigma-1}(\Hn))\ssf\cap\ssf
L^p(\R\ssf;\ssb L^q(\Hn))\,,
\end{equation*}
equipped with the norm
\begin{equation*}
\|u\|_X=\ssf\|\ssf u(t,x)\ssf
\|_{L_{\ssf t}^{\infty\vphantom{p}}H_{x\vphantom{t}}^{\sigma\vphantom{p}}}
+\,\|\ssf\partial_{\ssf t}u(t,x)\ssf
\|_{L_{\ssf t}^{\infty\vphantom{p}}H_{\ssf x\vphantom{t}}^{\sigma-1\vphantom{p}}}
+\,\|u\|_{L_t^pL_{x\vphantom{t}}^q}\,,
\end{equation*}
into itself.
Let us show that \ssf$\Phi$ \ssf is a contraction on the ball
\begin{equation*}
X_\varepsilon=\{\,u\!\in\!X\mid\|u\|_X\!\le\!\varepsilon\,\}\,,
\end{equation*}
provided \ssf$\varepsilon\!>\!0$
\ssf and \ssf$\|f\|_{H^\sigma}\!+\ssb\|g\|_{H^{\sigma-1}}$
\ssf are sufficiently small.
Let \ssf$v,\tilde{v}\!\in\!X$ \ssf and
\ssf$u\!=\!\Phi(v)\ssf,$ \ssf$\tilde{u}\!=\!\Phi(\tilde{v})$\ssf.
By resuming the arguments leading to \eqref{FixedPoint1}
and by using in addition H\"older's inequality,
we obtain the estimate
\begin{equation}\begin{aligned}\label{FixedPoint2}
\|\ssf u\ssb-\ssb\tilde{u}\ssf\|_{X\vphantom{L_t^{\tilde{Q}'}}}\!
&\le C\,\|\ssf F(v)\ssb-\ssb F(\tilde v)\ssf
\|_{L_{\ssf t}^{\tilde{p}'}\!L_{x\vphantom{t}}^{\tilde{Q}'}}\\
&\le C\,\|\ssf\{\ssf|v|^{\gamma-1}\!+\ssb|\tilde v|^{\gamma-1}\}\,
|\ssf v\ssb-\ssb\tilde v\ssf|\ssf
\|_{L_{\ssf t}^{\tilde{p}'}\!L_{x\vphantom{t}}^{\tilde{Q}'}}\\
&\le C\,\bigl\{\ssf\|v\|_{L_t^pL_x^q}^{\,\gamma-1}\ssb
+\|\tilde v\|_{L_t^pL_x^q}^{\,\gamma-1}\bigr\}\,
\|\ssf v\ssb-\ssb\tilde{v}\ssf\|_{L_t^pL_x^q}\\
&\le C\,\bigl\{\ssf\|v\|_{X\vphantom{L_t^Q}}^{\gamma-1}\!
+\ssb\|\tilde v\|_{X\vphantom{L_t^Q}}^{\gamma-1}\bigr\}\,
\|\ssf v\ssb-\ssb\tilde{v}\ssf\|_{X\vphantom{L_t^Q}}\,.
\end{aligned}\end{equation}
Thus, if we assume \ssf$\|v\|_X\!\le\ssb\varepsilon$\ssf,
\ssf$\|\tilde v\|_X\!\le\ssb\varepsilon$ \ssf and
$\|f\|_{H^\sigma}\!+\ssb\|g\|_{H^{\sigma-1}}\!\le\ssb\delta$\ssf,
then \eqref{FixedPoint1} and \eqref{FixedPoint2} yield
\begin{equation*}
\|u\|_X\!\le\ssb C\ssf\delta\ssb+\ssb C\ssf\varepsilon^\gamma\ssf,\quad
\|\tilde{u}\|_X\!\le\ssb C\ssf\delta\ssb+\ssb C\ssf\varepsilon^\gamma
\quad\text{and}\quad
\|\ssf u\ssb-\ssb\tilde{u}\ssf\|_X\!
\le\ssb2\,C\ssf\varepsilon^{\gamma-1}\,\|\ssf v\ssb-\ssb\tilde{v}\ssf\|_X\,.
\end{equation*}
Hence
\begin{equation*}\textstyle
\|u\|_X\!\le\ssb\varepsilon\ssf,\quad
\|\tilde{u}\|_X\le\varepsilon
\quad\text{and}\quad
\|\,u-\tilde u\,\|_X\le\frac12\,\|\,v-\tilde v\,\|_X
\end{equation*}
if \,$C\,\varepsilon^{\ssf\gamma-1}\!\le\ssb\frac14$
\,and \,$C\,\delta\ssb\le\ssb\frac34\,\varepsilon$\ssf.
One concludes
by applying the fixed point theorem
in the complete metric space $X_\varepsilon$\ssf.

It remains for us to check that
the set of conditions \eqref{conditions1}
can be fulfilled in the various cases \eqref{RegularityND}.
Notice that we may assume the following equalities in (\ref{conditions1}.b)\,:
\begin{equation*}\textstyle
\sigma=\frac{n+1}2\ssf\bigl(\frac12\ssb-\ssb\frac1q\bigr)
\quad\text{and}\quad
\tilde{\sigma}=\frac{n+1}2\ssf\bigl(\frac12\ssb-\ssb\frac1{\tilde{q}}\bigr)\ssf.
\end{equation*}
Thus \eqref{conditions1} reduces to the set of conditions\,:
\begin{equation}\label{conditions2}\begin{cases}
\,\text{(a)}&\hspace{-2mm}
\text{$(p,q)$ \ssf and \ssf$(\tilde{p},\tilde{q})$
\ssf are admissible couples\,;}\\
\,\text{(b)}&\hspace{-2mm}
\frac1q\ssb+\ssb\frac1{\tilde{q}}\ssb\ge\ssb\frac{n-1}{n+1}\,;\\
\,\text{(c)}&\hspace{-2mm}
\frac\gamma p\ssb+\ssb\frac1{\tilde{p}}\ssb=\ssb1\,;\\
\,\text{(d.i)}&\hspace{-2mm}
\frac\gamma q\ssb+\ssb\frac1{\tilde{q}}\ssb\ge\ssb1\,;\\
\,\text{(d.ii)}&\hspace{-2mm}
\bigl(\frac{2\ssf n}{n-1}\,\gamma\ssb-\ssb\frac{n+1}{n-1}\bigr)\ssf
\frac1q\ssb+\frac1{\tilde{q}}\ssb\le\ssb\frac{n+1}{n-1}\,;\\
\,\text{(e)}&\hspace{-2mm}
q\ssb>\ssb\gamma\ssf.\\
\end{cases}\end{equation}
We shall discuss these conditions first in high dimensions
\ssf and next in low dimensions.
\smallskip

\noindent$\blacktriangleright$
\,Assume that \ssf$n\ssb\ge\ssb6$\ssf.
\smallskip

Firstly notice that \ssf$\gammaconf\ssb<\ssb2$\ssf.
As \ssf$\gamma\!\le\!\gammaconf$ \ssf and \ssf$q\!>\!2$\ssf,
(\ref{conditions2}.e) is trivially satisfied.

Secondly we claim that
(\ref{conditions2}.a) and (\ref{conditions2}.c)
reduce to the single condition
\begin{equation}\label{ac}\textstyle
\frac\gamma q\ssb+\ssb\frac1{\tilde{q}}\ssb\ge\ssb\frac{\gamma+1}2\ssb-\ssb\frac2{n-1}
\end{equation}
in the square
\begin{equation}\label{SQUARE1}\textstyle
R\ssf=\bigl[\ssf\frac12\!-\!\frac1{n-1},\frac12\ssf\bigr)\ssb
\times\ssb\bigl[\ssf\frac12\!-\!\frac1{n-1},\frac12\ssf\bigr)\ssf.
\end{equation}
More precisely, if \ssf$(p,q)$ \ssf and \ssf$(\tilde{p},\tilde{q})$
\ssf are admissible couples satisfying (\ref{conditions2}.c),
then \ssf$\bigl(\frac1q,\frac1{\tilde{q}}\bigr)$ \ssf is a
\linebreak
\vspace{-4.8mm}

\noindent
point in the square $R$ \ssf satisfying \eqref{ac}.
Conversely,
if \ssf$\bigl(\frac1q,\frac1{\tilde{q}}\bigr)\!\in\!R$ \ssf satisfies \eqref{ac},
then there
\linebreak
\vspace{-4.75mm}

\noindent
exists a one parameter family
of admissible couples \ssf$(p,q)$ \ssf and \ssf$(\tilde{p},\tilde{q})$
\ssf satisfying (\ref{conditions2}.c).
All these claims can be deduced from the following four quadrant figure.

\begin{figure}[ht]
\begin{center}
\psfrag{1/p}[c]{$\frac1p$}
\psfrag{1/ptilde}[c]{$\frac1{\tilde{p}}$}
\psfrag{1/q}[c]{$\frac1q$}
\psfrag{1/qtilde}[c]{$\frac1{\tilde{q}}$}
\psfrag{1/2}[c]{$\frac12$}
\psfrag{1/2-1/(n-1)center}[c]{$\frac12\!-\!\frac1{n-1}$}
\psfrag{1/2-1/(n-1)left}[l]{$\frac12\!-\!\frac1{n-1}$}
\psfrag{1/2-1/(n-1)right}[r]{$\frac12\!-\!\frac1{n-1}$}
\psfrag{1/(2gamma)}[c]{$\frac1{2\ssf\gamma}$}
\psfrag{1-gamma/2left}[l]{$1\!-\!\frac\gamma2$}
\psfrag{1-gamma/2right}[r]{$1\!-\!\frac\gamma2$}
\psfrag{1/2-1/gamma1/(n-1)}[c]{$\frac12\!-\!\frac1\gamma\ssf\frac1{n-1}$}
\psfrag{1/2-(2-gamma)/(n-1)}[c]{$\frac12\!-\!\frac{2-\gamma}{n-1}$}
\psfrag{1-gamma/2}[c]{$1\!-\!\frac\gamma2$}
\psfrag{gamma/p+1/ptilde=1}[c]{$\frac\gamma p\ssb+\ssb\frac1{\tilde{p}}=1$}
\psfrag{gamma/q+1/qtilde=(gamma+1)/2-2/(n-1)}[c]
{$\frac\gamma q\ssb+\ssb\frac1{\tilde{q}}
=\frac{\gamma+1}2\ssb-\ssb\frac2{n-1}$}
\includegraphics[height=110mm]{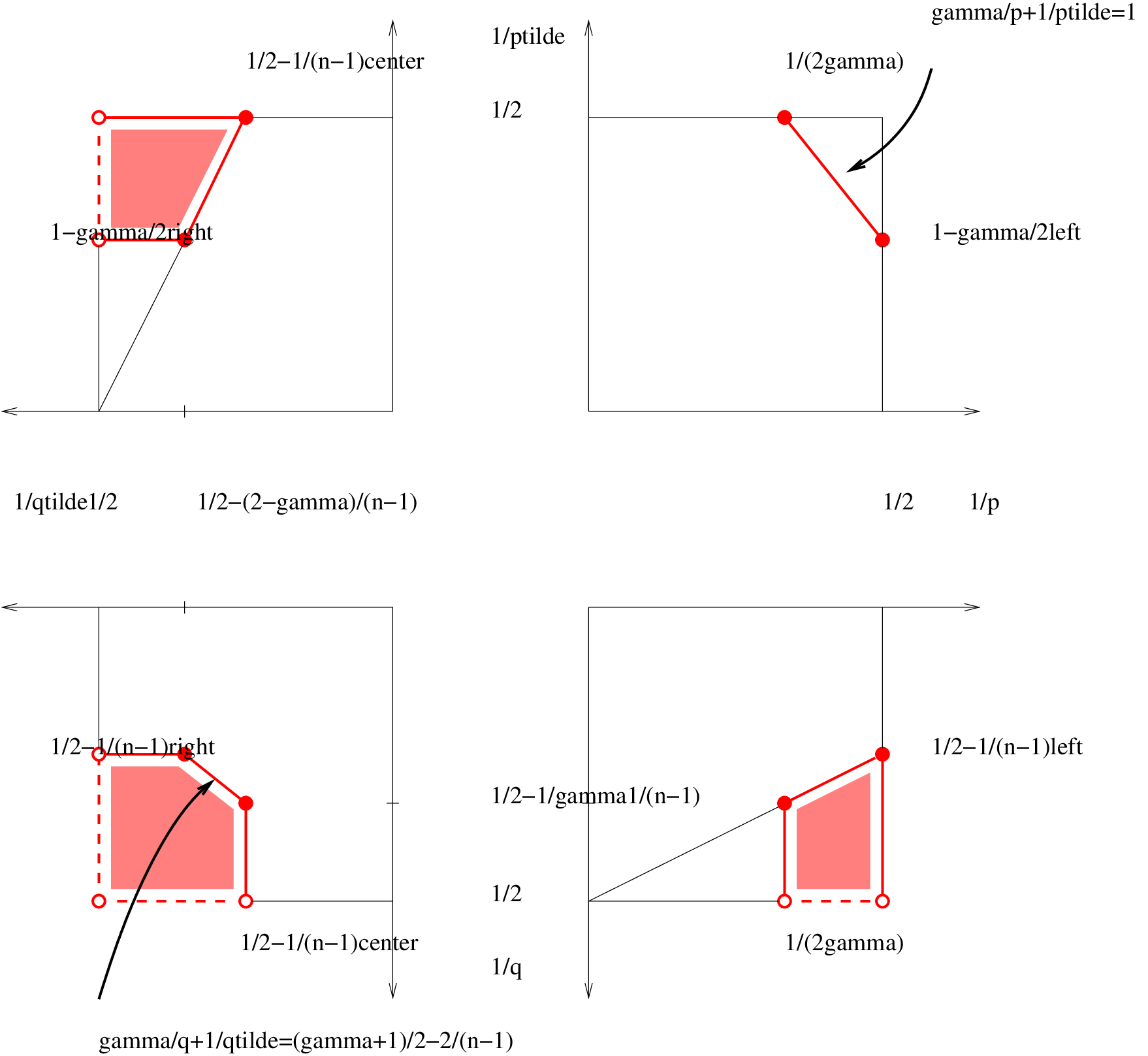}
\caption{Case \,$\gamma\ssb<\ssb2$}
\label{FIGURE1}
\end{center}
\end{figure}

Thirdly, as \ssf$\gamma\!\le\!\gammaconf$\ssf,
\eqref{ac} follows actually from (\ref{conditions2}.d.i).

Fourthly we claim that (\ref{conditions2}.b) follows
from (\ref{conditions2}.d.i) and (\ref{conditions2}.d.ii).
Consider indeed the three lines
\begin{equation}\label{3lines}\begin{cases}
\,\text{(b)}&\hspace{-2mm}
\frac1q\ssb+\ssb\frac1{\tilde{q}}\ssb=\ssb\frac{n-1}{n+1}\\
\,\text{(d.i)}&\hspace{-2mm}
\frac\gamma q\ssb+\ssb\frac1{\tilde{q}}\ssb=\ssb1\\
\,\text{(d.ii)}&\hspace{-2mm}
\bigl(\frac{2\ssf n}{n-1}\,\gamma\ssb-\ssb\frac{n+1}{n-1}\bigr)\ssf
\frac1q\ssb+\frac1{\tilde{q}}\ssb=\ssb\frac{n+1}{n-1}\\
\end{cases}\end{equation}
in the plane with coordinates $\bigl(\frac1q,\frac1{\tilde q}\bigr)$.
On one hand, they meet at the same point,
whose coordinates are
\begin{equation}\label{qone}\begin{cases}
\,\frac1{q_1}=\frac2{n+1}\ssf\frac1{\gamma-1}\,,\\
\,\frac1{\tilde{q}_1}=\frac{n-1}{n+1}-\frac2{n+1}\ssf\frac1{\gamma-1}\,.\\
\end{cases}\end{equation}
On the other hand, the coefficients of \,$\frac1q$
\ssf occur in increasing order in \eqref{3lines}\,:
\begin{equation*}\textstyle
1<\gamma<\frac{2\ssf n}{n-1}\,\gamma\ssb-\ssb\frac{n+1}{n-1}\,.
\end{equation*}
Hence (\ref{conditions2}.b) follows
from (\ref{conditions2}.d.i) and (\ref{conditions2}.d.ii),
which define the sector \ssf$S$
with vertex $\bigl(\frac1{q_1},\frac1{\tilde{q}_1}\bigr)$
and edges (\ref{3lines}.d.i), (\ref{3lines}.d.ii)
depicted in Figure \ref{Sector}.

\begin{figure}[ht]
\begin{center}
\psfrag{b}[r]{(\ref{3lines}.b)}
\psfrag{b}[r]{(\ref{3lines}.b)}
\psfrag{d.i}[r]{(\ref{3lines}.d.i)}
\psfrag{d.ii}[l]{(\ref{3lines}.d.ii)}
\psfrag{(1/q1,1/qtilde1)}[l]{$\bigl(\frac1{q_1},\frac1{\tilde{q}_1}\bigr)$}
\includegraphics[height=60mm]{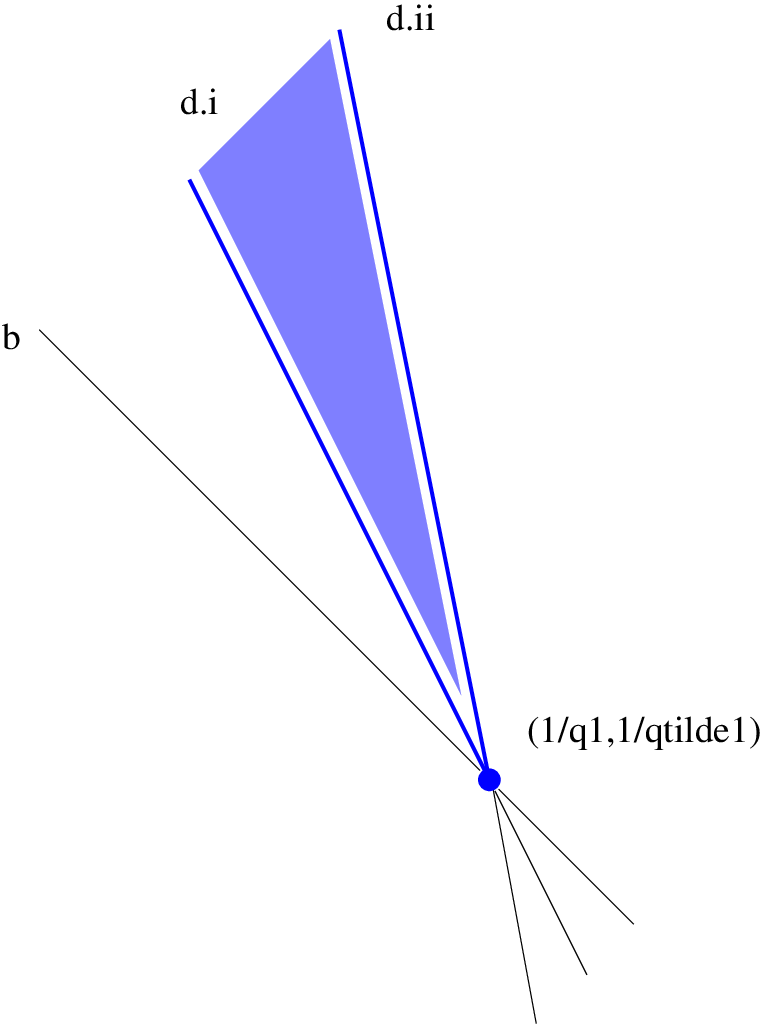}
\caption{Sector $S$}
\label{Sector}
\end{center}
\end{figure}

In summary, the set of conditions \eqref{conditions2}
reduce to the three conditions
(\ref{conditions2}.d.i), (\ref{conditions2}.d.ii), \eqref{SQUARE1}
in the plane with coordinates $\bigl(\frac1q,\frac1{\tilde{q}}\bigr)$.
In order to conclude,
we examine the possible
\linebreak
\vspace{-4.75mm}

\noindent
intersections of the sector \ssf$S$
defined by (\ref{conditions2}.d.i) and (\ref{conditions2}.d.ii)
with the square $R$ \ssf
defined by \eqref{SQUARE1},
and we determine in each case the minimal regularity
\ssf$\sigma\ssb=\ssb\frac{n+1}2\bigl(\frac12\ssb-\ssb\frac1q\bigr)$.
\medskip

\newpage

\noindent$\bullet$
\textit{\,Case 1}\,:
\,$1\!<\!\gamma\ssb\le\!\gammaone$
\medskip

\noindent
In the following three subcases,
the minimal regulatity condition is \ssf$\sigma\!>\!0$\ssf,
as \ssf$\frac1q\!>\!\frac12$ \ssf can
\linebreak
\vspace{-5mm}

\noindent
be chosen arbitrarily close to \ssf$\frac12$\ssf.
\medskip

\noindent$\circ$
\textit{\,Subcase 1.1}\,:
\,$1\!<\!\gamma\ssb\le\!1\!+\ssb\frac2n$
\vspace{-2.5mm}

\begin{figure}[ht]
\begin{center}
\psfrag{(1/2,1/2)}[l]{\color{red}$(\frac12,\frac12)$}
\psfrag{(1/2-1/(n-1),1/2-1/(n-1))}[r]
{\color{red}$(\frac12\!-\!\frac1{n-1},\frac12\!-\!\frac1{n-1})$}
\psfrag{(1/q1,1/qtilde1)left}[l]{\color{blue}$(\frac1{q_1},\frac1{\tilde{q}_1})$}
\includegraphics[height=60mm]{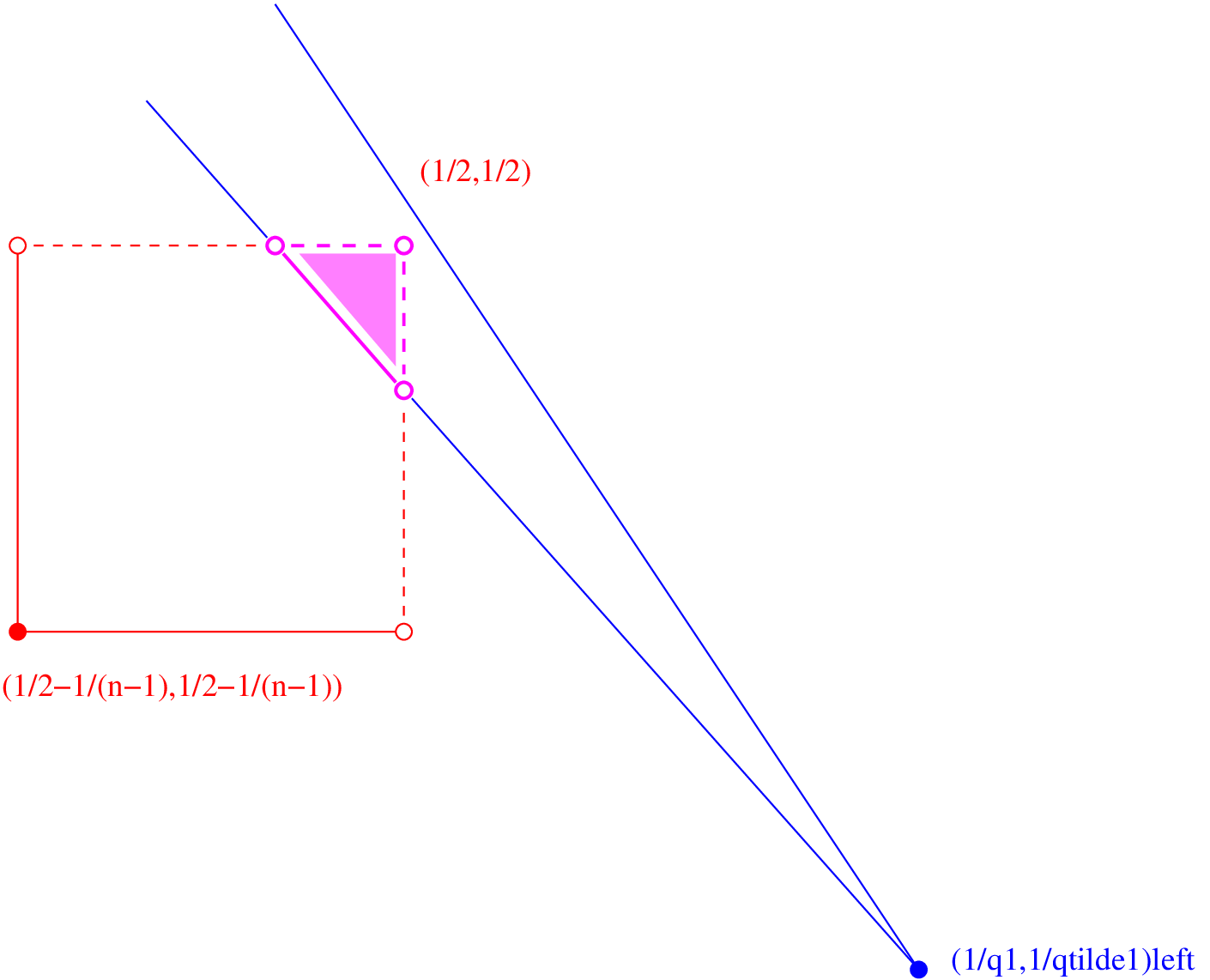}
\caption{Case \,$1\!<\!\gamma\ssb\le\!1\!+\ssb\frac2n$}
\label{Case1.1}
\end{center}
\end{figure}

\noindent$\circ$
\textit{\,Subcase 1.2}\,:
\,$1\!+\ssb\frac2n\!\le\!\gamma\ssb\le\!1\!+\ssb\frac2{n-1}$
\vspace{-2.5mm}

\begin{figure}[ht]
\begin{center}
\psfrag{(1/2,1/2)}[l]{\color{red}$(\frac12,\frac12)$}
\psfrag{(1/2-1/(n-1),1/2-1/(n-1))}[r]
{\color{red}$(\frac12\!-\!\frac1{n-1},\frac12\!-\!\frac1{n-1})$}
\psfrag{(1/q1,1/qtilde1)left}[l]{\color{blue}$(\frac1{q_1},\frac1{\tilde{q}_1})$}
\includegraphics[height=60mm]{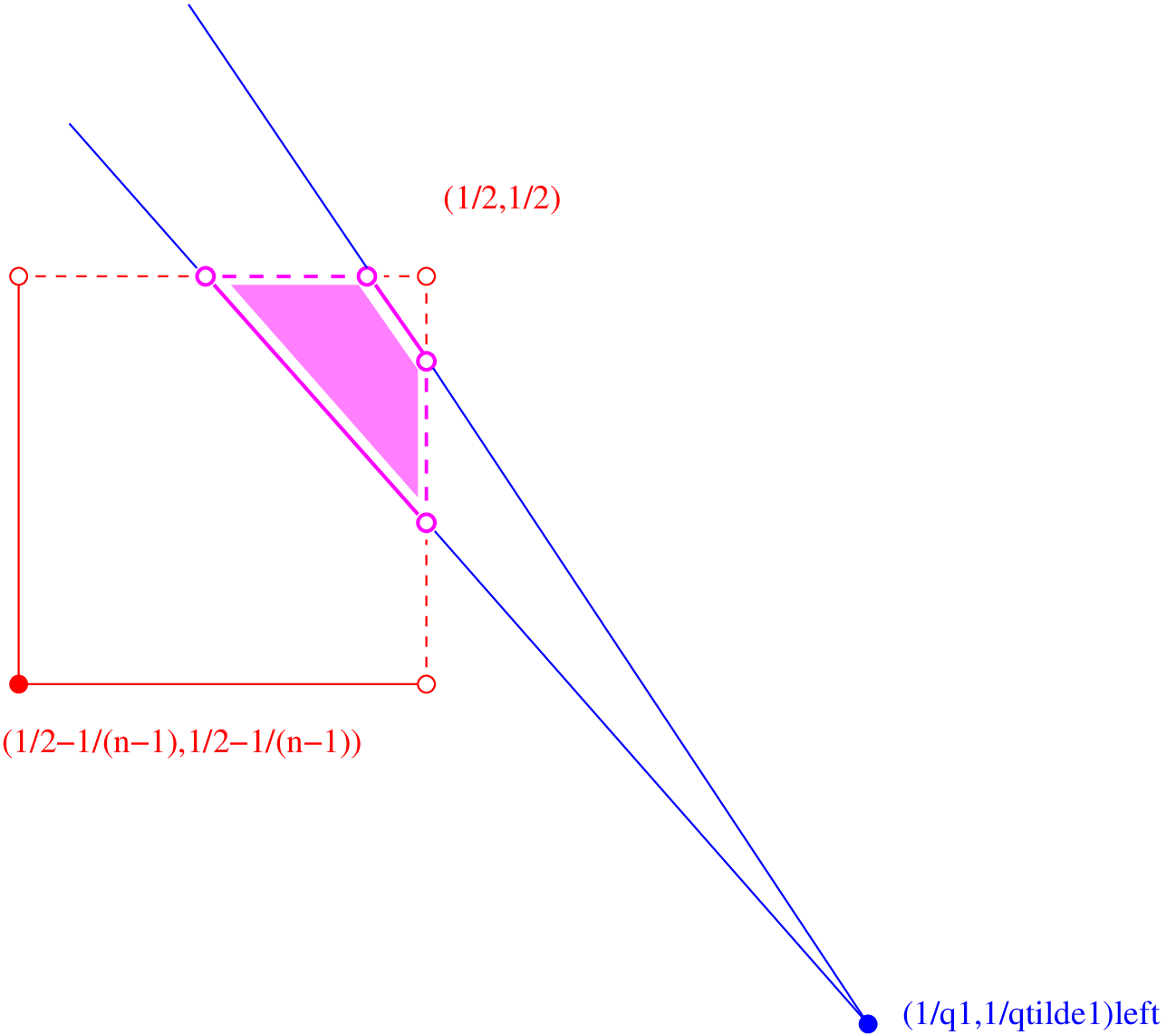}
\caption{Case \,$1\!+\ssb\frac2n\!\le\!\gamma\ssb\le\!1\!+\ssb\frac2{n-1}$}
\label{Case1.2}
\end{center}
\end{figure}

\newpage

\noindent$\circ$
\textit{\,Subcase 1.3}\,:
\,$1\!+\ssb\frac2{n-1}\!\le\!\gamma\ssb\le\!\gammaone$
\vspace{-2.5mm}

\begin{figure}[ht]
\begin{center}
\psfrag{(1/2,1/2)}[l]{\color{red}$(\frac12,\frac12)$}
\psfrag{(1/2-1/(n-1),1/2-1/(n-1))}[r]
{\color{red}$(\frac12\!-\!\frac1{n-1},\frac12\!-\!\frac1{n-1})$}
\psfrag{(1/q1,1/qtilde1)left}[l]{\color{blue}$(\frac1{q_1},\frac1{\tilde{q}_1})$}
\includegraphics[height=47mm]{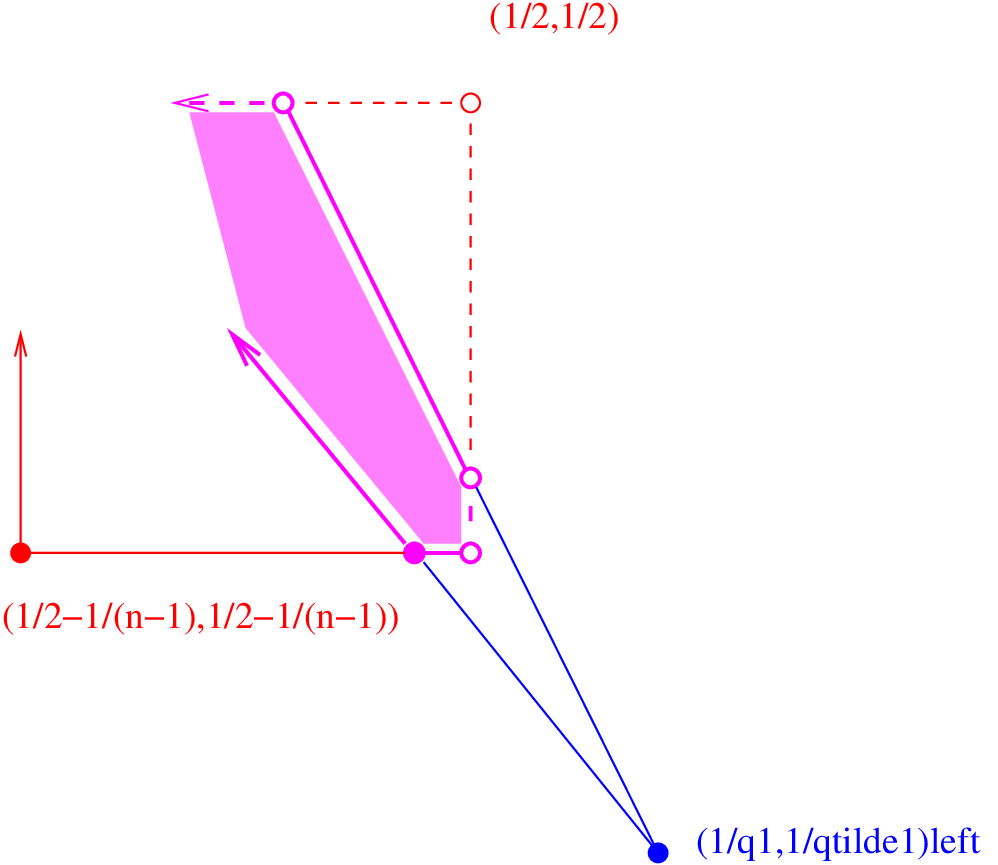}
\caption{Case \,$1\!+\ssb\frac2{n-1}\!\le\!\gamma\ssb\le\!\gammaone$}
\label{Case1.3}
\end{center}
\end{figure}

\noindent$\bullet$
\textit{\,Case 2}\,:
\,$\gammaone\!<\!\gamma\ssb\le\!\gammatwo$

\noindent
The minimal regularity
\ssf$\sigma\ssb=\ssb\sigma_1(\gamma)$
\ssf is reached at the boundary point
\ssf$\bigl(\frac1q,\frac1{\tilde{q}}\bigr)\!
=\!\bigl(\frac{n+5}{4\ssf n}\ssf\frac1{\gamma\ssf-\ssf\frac{n+1}{2\ssf n}}\ssf,$
\linebreak
\vskip-5mm

\noindent
$\frac12\ssb-\ssb\frac1{n-1}\bigr)$.
\vspace{-5mm}

\begin{figure}[ht]
\begin{center}
\psfrag{(1/2,1/2)}[l]{\color{red}$(\frac12,\frac12)$}
\psfrag{(1/2-1/(n-1),1/2-1/(n-1))}[r]
{\color{red}$(\frac12\!-\!\frac1{n-1},\frac12\!-\!\frac1{n-1})$}
\psfrag{(1/q1,1/qtilde1)}[l]{\color{blue}$(\frac1{q_1},\frac1{\tilde{q}_1})$}
\includegraphics[height=45mm]{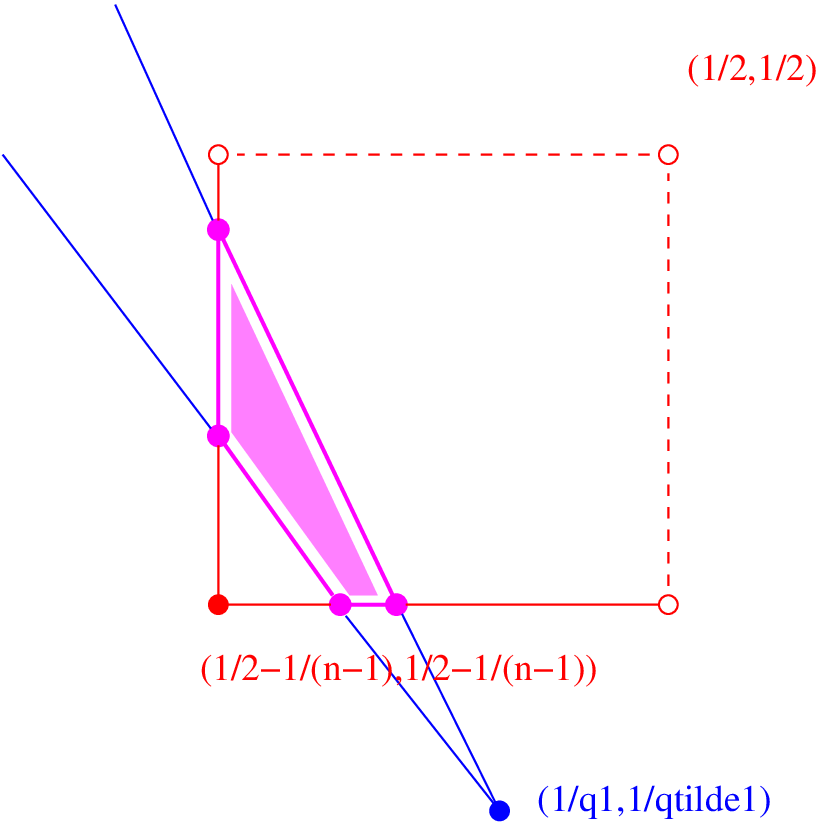}
\caption{Case \,$\gammaone\!<\!\gamma\ssb\le\!\gammatwo$}
\label{Case2}
\end{center}
\end{figure}

\noindent$\bullet$
\textit{\,Case 3}\,:
\,$\gammatwo\!\le\!\gamma\ssb\le\ssb\gammaconf$

\noindent
The minimal regularity
\ssf$\sigma\ssb=\ssb\sigma_2(\gamma)$
\ssf is reached at the vertex
$\bigl(\frac1{q_1},\frac1{\tilde{q}_1})$.
\vspace{-2.5mm}

\begin{figure}[ht]
\begin{center}
\psfrag{(1/2,1/2)}[l]{\color{red}$(\frac12,\frac12)$}
\psfrag{(1/2-1/(n-1),1/2-1/(n-1))}[r]
{\color{red}$(\frac12\!-\!\frac1{n-1},\frac12\!-\!\frac1{n-1})$}
\psfrag{(1/q1,1/qtilde1)}[l]{\color{violet}$(\frac1{q_1},\frac1{\tilde q_1})$}
\includegraphics[height=37mm]{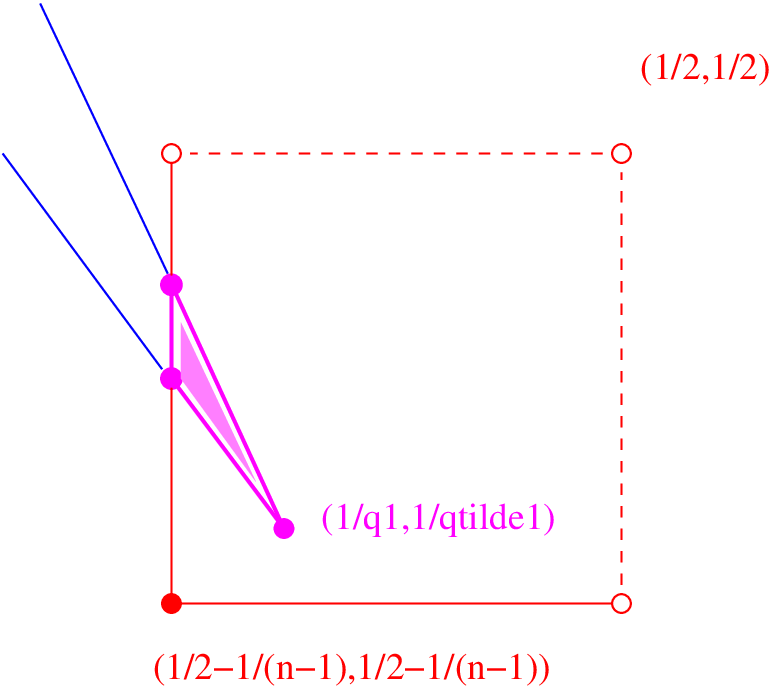}
\caption{Case \,$\gammatwo\!\le\!\gamma\ssb\le\ssb\gammaconf$}
\label{Case3}
\end{center}
\end{figure}

\noindent
In the limit case \ssf$\gamma\ssb=\ssb\gammaconf$\ssf,
notice that all indices \ssf$\frac1{q_1}$, \ssf$\frac1{\tilde{q}_1}$,
\ssf$\frac1{p_1}\ssb=\ssb\frac{n-1}2\ssf\bigl(\frac12\ssb-\ssb\frac1{q_1}\bigr)$,
\ssf$\frac1{\tilde{p}_1}\ssb
=\ssb\frac{n-1}2\ssf\bigl(\frac12\ssb-\ssb\frac1{\tilde{q}_1}\bigr)$
\linebreak
\vskip-4.25mm

\noindent
become equal to the Strichartz index
\ssf$\frac12\ssf\frac{n-1}{n+1}\ssb=\ssf\frac12\ssb-\ssb\frac1{n+1}\ssf$.
\smallskip

This concludes the proof of Theorem \ref{GWPND}
for $1\!<\!\gamma\ssb\le\!\gammaconf$
\ssf and \ssf$n\ssb\ge\ssb6$\ssf.
\smallskip

\noindent$\blacktriangleright$
\,Assume that \ssf$n\!=\ssb4$ \ssf or \ssf$5$\ssf.
\smallskip

Let us adapt the proof above.
If \ssf$\gamma\ssb\ge\ssb2$\ssf,
(\ref{conditions2}.e) must be checked
and (\ref{conditions2}.a), (\ref{conditions2}.c)
reduce again to \eqref{ac}, but this time in the slightly larger square
\begin{equation}\label{SQUARE2}\textstyle
R\ssf=\bigl[\ssf\frac12\!-\!\frac1{n-1},\frac12\ssf\bigr)\ssb
\times\ssb\bigl[\ssf\frac12\!-\!\frac1{n-1},\frac12\ssf\bigr]\ssf.
\end{equation}
See Figure \ref{FIGURE2}.

\begin{figure}[ht]
\begin{center}
\psfrag{1/p}[c]{$\frac1p$}
\psfrag{1/ptilde}[c]{$\frac1{\tilde{p}}$}
\psfrag{1/q}[c]{$\frac1q$}
\psfrag{1/qtilde}[c]{$\frac1{\tilde{q}}$}
\psfrag{1/2}[c]{$\frac12$}
\psfrag{1/2-1/(n-1)center}[c]{$\frac12\!-\!\frac1{n-1}$}
\psfrag{1/2-1/(n-1)left}[l]{$\frac12\!-\!\frac1{n-1}$}
\psfrag{1/2-1/(n-1)right}[r]{$\frac12\!-\!\frac1{n-1}$}
\psfrag{1/gamma}[c]{$\frac1\gamma$}
\psfrag{1/(2gamma)}[c]{$\frac1{2\ssf\gamma}$}
\psfrag{1/2-1/gamma1/(n-1)}[c]{$\frac12\!-\!\frac1\gamma\ssf\frac1{n-1}$}
\psfrag{1/2-2/gamma1/(n-1)}[c]{$\frac12\!-\!\frac2\gamma\ssf\frac1{n-1}$}
\psfrag{gamma/p+1/ptilde=1}[c]{$\frac\gamma p\ssb+\ssb\frac1{\tilde{p}}=1$}
\psfrag{gamma/q+1/qtilde=(gamma+1)/2-2/(n-1)}[c]
{$\frac\gamma q\ssb+\ssb\frac1{\tilde{q}}
=\frac{\gamma+1}2\ssb-\ssb\frac2{n-1}$}
\includegraphics[height=110mm]{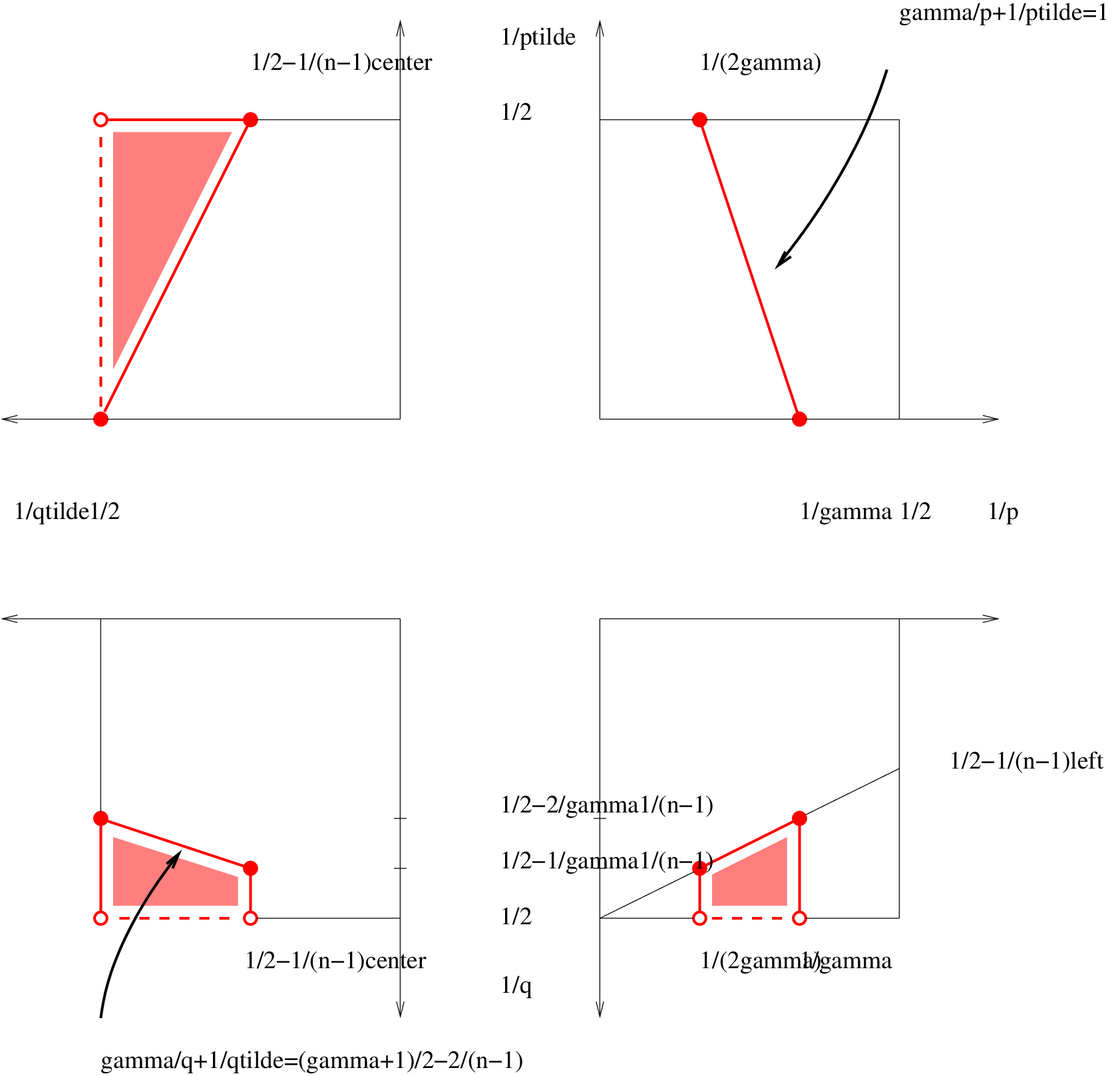}
\caption{Case \,$\gamma\ssb\ge\ssb2$}
\label{FIGURE2}
\end{center}
\end{figure}

\noindent
Thus \eqref{conditions2} reduce to
\begin{equation*}\begin{cases}
\,\text{(\ref{conditions2}.d.i), (\ref{conditions2}.d.ii), \eqref{SQUARE2}}
&\text{if \,$1\!<\!\gamma\ssb<\!2$\ssf,}\\
\,\text{(\ref{conditions2}.d.i), (\ref{conditions2}.d.ii),
(\ref{conditions2}.e), \eqref{SQUARE2}}
&\text{if \,$2\ssb\le\!\gamma\ssb\le\!\gammaconf$\,.}\\
\end{cases}\end{equation*}
The case--by--case study of the intersection \ssf$S\ssb\cap\ssb R$
\ssf is carried out as above and yield the same results.
The only difference lies in the fact that
the sector \ssf$S$ exists the square $R$ \ssf
through the top edge instead of the left edge
(see Figures \ref{Case2bis}, \ref{Case3bis}, \ref{Case3ter} below).
Notice that (\ref{conditions2}.e) is satisfied,
as $q_1\!>\ssb\gamma$
when $2\ssb\le\!\gamma\ssb\le\!\gammaconf$\ssf.
\medskip

\noindent$\bullet$
\textit{\,Case 2}\,:
\,$\gammaone\!<\!\gamma\ssb\le\!\gammatwo$
\vspace{-2.5mm}

\begin{figure}[ht]
\begin{center}
\psfrag{(1/2,1/2)}[l]{\color{red}$(\frac12,\frac12)$}
\psfrag{(1/2-1/(n-1),1/2-1/(n-1))}[r]
{\color{red}$(\frac12\!-\!\frac1{n-1},\frac12\!-\!\frac1{n-1})$}
\psfrag{(1/q1,1/qtilde1)}[l]{\color{blue}$(\frac1{q_1},\frac1{\tilde{q}_1})$}
\includegraphics[height=43mm]{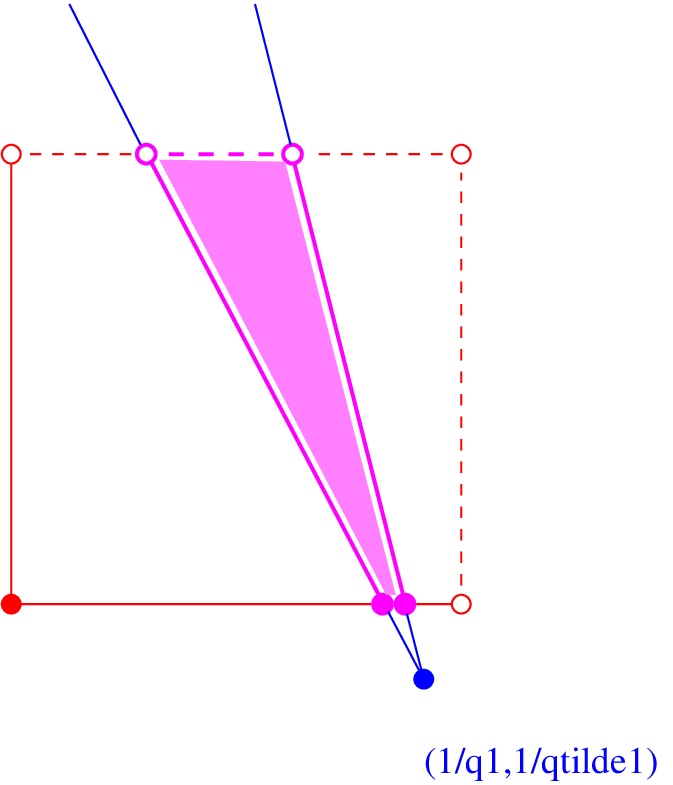}
\caption{Case \,$\gammaone\!<\!\gamma\ssb\le\!\gammatwo$}
\label{Case2bis}
\end{center}
\end{figure}

\noindent$\bullet$
\textit{\,Case 3}\,:
\,$\gammatwo\!\le\!\gamma\ssb<\!\gammaconf$
\vspace{-2.5mm}

\begin{figure}[ht]
\begin{center}
\psfrag{(1/2,1/2)}[l]{\color{red}$(\frac12,\frac12)$}
\psfrag{(1/2-1/(n-1),1/2-1/(n-1))}[r]
{\color{red}$(\frac12\!-\!\frac1{n-1},\frac12\!-\!\frac1{n-1})$}
\psfrag{(1/q1,1/qtilde1)}[l]{\color{violet}$(\frac1{q_1},\frac1{\tilde q_1})$}
\includegraphics[height=34mm]{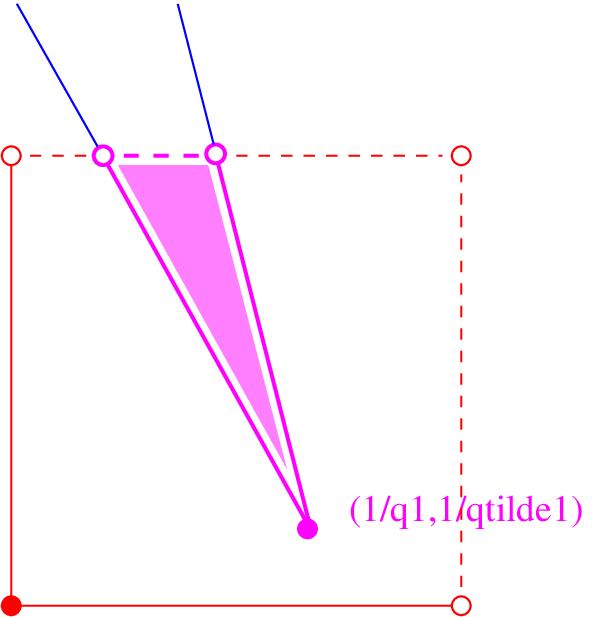}
\caption{Subcase \,$\gammatwo\!\le\!\gamma\ssb<\ssb2$}
\label{Case3bis}
\end{center}
\end{figure}
\vskip-5mm

\begin{figure}[ht]
\begin{center}
\psfrag{(1/2,1/2)}[l]{\color{red}$(\frac12,\frac12)$}
\psfrag{(1/2-1/(n-1),1/2-1/(n-1))}[r]
{\color{red}$(\frac12\!-\!\frac1{n-1},\frac12\!-\!\frac1{n-1})$}
\psfrag{(1/q1,1/qtilde1)}[l]{\color{violet}$(\frac1{q_1},\frac1{\tilde q_1})$}
\includegraphics[height=34mm]{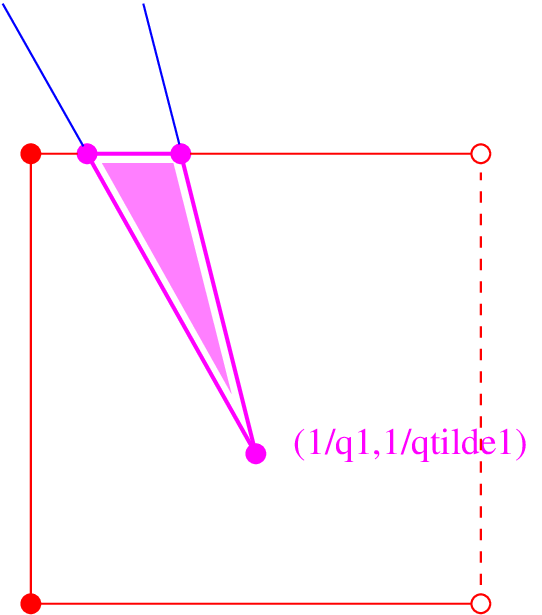}
\caption{Subcase \,$2\ssb\le\!\gamma\ssb\le\ssb\gammaconf$}
\label{Case3ter}
\end{center}
\end{figure}

\noindent
This concludes the proof of Theorem \ref{GWPND}
for $1\!<\!\gamma\ssb\le\!\gammaconf$
\ssf and \ssf$n\ssb=\ssb4\ssf,5$\ssf.
\smallskip

\noindent$\blacktriangleright$
\,Assume that \ssf$n\ssb=\ssb3$\ssf.
\smallskip

The proof works the same, except that the square becomes
\begin{equation}
R\,=\ssf\begin{cases}
\,\bigl(\ssf0,\frac12\ssf\bigr)\ssb\times\ssb\bigl(\ssf0,\frac12\ssf\bigr)
&\text{if \,}1\!<\!\gamma\ssb<\ssb2\ssf,\\
\,\bigl(\ssf0,\frac12\ssf\bigr)\ssb\times\ssb\bigl(\ssf0,\frac12\ssf\bigr]
&\text{if \,}2\ssb\le\!\gamma\ssb\le\!\gammaconf\ssf.\\
\end{cases}\end{equation}
and that $\bigl(\frac1{q_1},\frac1{\tilde{q}_1}\bigr)$ enters the square $R$
\ssf through the vertex $\bigl(\frac12,0\bigr)$ instead of the bottom edge.
This happens when $\gamma\ssb=\ssb2$
\ssf and in this case (\ref{conditions2}.e) is satisfied.
\vspace{-2.5mm}

\begin{figure}[ht]
\begin{center}
\psfrag{(1/2,1/2)}[l]{\color{red}$(\frac12,\frac12)$}
\psfrag{(1/2-1/(n-1),1/2-1/(n-1))}[r]
{\color{red}$(\frac12\!-\!\frac1{n-1},\frac12\!-\!\frac1{n-1})$}
\psfrag{(1/q1,1/qtilde1)}[l]{\color{violet}$(\frac1{q_1},\frac1{\tilde q_1})$}
\includegraphics[height=33mm]{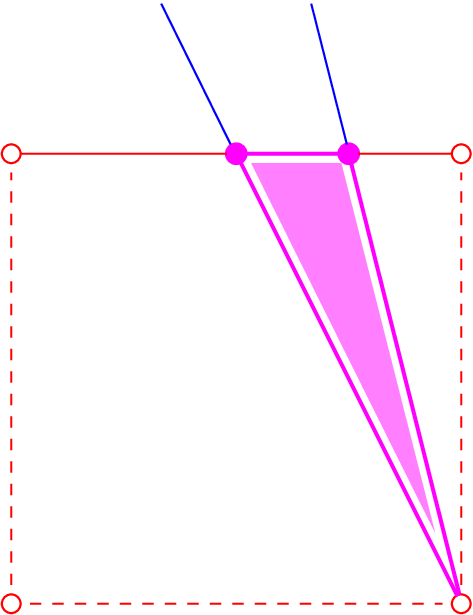}
\caption{Case \,$\gamma\ssb=\ssb2$}
\label{Case2-3}
\end{center}
\end{figure}

\noindent
It is further satisfied
when $2\ssb<\!\gamma\ssb\le\!\gammaconf$\ssf,
as $q_1\!>\ssb\gamma$\ssf.
\smallskip

This concludes the proof of Theorem \ref{GWPND}
for $1\!<\!\gamma\ssb\le\!\gammaconf$\ssf.
\end{proof}

\begin{proof}[Proof of Theorem \ref{GWPND}
for \,$\gammaconf\ssb\le\!\gamma\ssb\le\!\gammafour$\ssf]
We resume the fixed point method above,
using Corollary \ref{GeneralizedStrichartzND}
instead of Theorem \ref{StrichartzND},
and obtain in this way the set of conditions\,:
\begin{equation}\label{conditions3}\begin{cases}
\,\text{(a)}&\hspace{-2mm}
\text{$2\ssb\le\ssb p\ssb\le\!\infty$
\,and \,$2\ssb\le\ssb q\ssb<\!\infty$
\,satisfy \ssf$\frac1p\!
\le\!\frac{n-1}2\ssf\bigl(\frac12\!-\!\frac1q\bigr)$\,;}\\
\,\text{($\tilde{\text{a}}$)}&\hspace{-2mm}
\text{$2\ssb\le\ssb\tilde{p}\ssb\le\!\infty$
\,and \,$2\ssb\le\ssb\tilde{q}<\!\infty$
\,satisfy \ssf$\frac1{\tilde{p}}\!
\le\!\frac{n-1}2\ssf\bigl(\frac12\!-\!\frac1{\tilde{q}}\bigr)$\,;}\\
\,\text{(b)}&\hspace{-2mm}
\sigma\!\ge\ssb n\ssf\big(\frac12\!-\!\frac1q\bigr)\hspace{-1mm}-\!\frac1p\ssf,
\;\tilde{\sigma}\!\ge\ssb
n\ssf\big(\frac12\!-\!\frac1{\tilde{q}}\bigr)
\hspace{-1mm}-\!\frac1{\tilde{p}}\ssf,
\;\sigma\ssb+\ssb\tilde{\sigma}\ssb\le\ssb1\,;\\
\,\text{(c)}&\hspace{-2mm}
\frac\gamma p\ssb+\ssb\frac1{\tilde{p}}\ssb=\ssb1\,;\\
\,\text{(d)}&\hspace{-2mm}
\text{$1\ssb\le\ssb\frac\gamma q\ssb+\ssb\frac1{\tilde{q}}\ssb
\le\ssb1\ssb+\ssb\frac{1-\sigma-\tilde{\sigma}}n$\,;}\\
\,\text{(e)}&\hspace{-2mm}
\text{$q\ssb>\ssb\gamma$\ssf.}\\
\end{cases}\end{equation}
We may assume that
\begin{equation*}\textstyle
\sigma=n\,\bigl(\frac12\ssb-\ssb\frac1q\bigr)\ssb-\frac1p
\quad\text{and}\quad\tilde{\sigma}=
n\,\bigl(\frac12\ssb-\ssb\frac1{\tilde{q}}\bigr)\ssb-\frac1{\tilde{p}}\,.
\end{equation*}
With this choice,
the conditions
\begin{equation*}\textstyle
\sigma\ssb+\ssb\tilde{\sigma}\ssb\le\ssb1
\quad\text{and}\quad
\frac\gamma q\ssb+\ssb\frac1{\tilde{q}}\ssb
\le\ssb1\ssb+\ssb\frac{1-\sigma-\tilde{\sigma}}n
\end{equation*}
become
\begin{equation}\label{firstcondition}\textstyle
\frac1p\ssb+\ssb\frac1{\tilde{p}}\ssb+\ssb1\ssb
\ge\ssb n\ssf\bigl(1\!-\ssb\frac1q\ssb-\ssb\frac1{\tilde{q}}\ssf\bigr)
\end{equation}
and
\begin{equation}\label{secondcondition}\textstyle
\frac1p\ssb+\ssb\frac1{\tilde{p}}\ssb+\ssb1\ssb
\ge\ssb(\gamma\!-\!1)\ssf\frac nq\,.
\end{equation}
Notice moreover that \eqref{firstcondition} follows from \eqref{secondcondition},
combined with \ssf$\frac\gamma q\ssb+\ssb\frac1{\tilde{q}}\ssb\ge\ssb1$\ssf,
and that \eqref{secondcondition} can be rewritten as follows,
using (\ref{conditions3}.c)\,:
\begin{equation*}\textstyle
\frac1p\ssb+\ssb\frac nq\ssb\le\ssb\frac2{\gamma-1}\,.
\end{equation*}
Thus \eqref{conditions3} reduces to the set of conditions
\begin{equation}\label{conditions4}\begin{cases}
\,\text{(a)}&\hspace{-2mm}
\text{$2\ssb\le\ssb p\ssb\le\!\infty$
\,and \,$2\ssb\le\ssb q\ssb<\!\infty$
\,satisfy \ssf$\frac1p\ssb
\le\ssb\frac{n-1}2\ssf\bigl(\frac12\!-\!\frac1q\bigr)$\,;}\\
\,\text{($\tilde{\text{a}}$)}&\hspace{-2mm}
\text{$2\ssb\le\ssb\tilde{p}\ssb\le\!\infty$
\,and \,$2\ssb\le\ssb\tilde{q}<\!\infty$
\,satisfy \ssf$\frac1{\tilde{p}}\ssb
\le\ssb\frac{n-1}2\ssf\bigl(\frac12\!-\!\frac1{\tilde{q}}\bigr)$\,;}\\
\,\text{(c)}&\hspace{-2mm}
\frac\gamma p\ssb+\ssb\frac1{\tilde{p}}\ssb=\ssb1\,;\\
\,\text{(d.i)}&\hspace{-2mm}
\frac\gamma q\ssb+\ssb\frac1{\tilde{q}}\ssb\ge\ssb1\,;\\
\,\text{(d.ii)}&\hspace{-2mm}
\frac1p\ssb+\ssb\frac nq\ssb\le\ssb\frac2{\gamma-1}\,;\\
\,\text{(e)}&\hspace{-2mm}
q\ssb>\ssb\gamma\ssf.\\
\end{cases}\end{equation}
Among these conditions, consider first (\ref{conditions4}.a) and (\ref{conditions4}.d.ii).
In the plane with coordinates $\bigl(\frac1p,\frac1q\bigr)$, the two lines
\begin{equation}\label{2lines}\begin{cases}
\,\text{(a)}&\hspace{-2mm}
\frac1p\ssb+\ssb\frac{n-1}2\ssf\frac1q\ssb=\ssb\frac{n-1}4\\
\,\text{(d.ii)}&\hspace{-2mm}
\frac1p\ssb+\ssb\frac nq\ssb=\ssb\frac2{\gamma-1}
\end{cases}\end{equation}
meet at the point $\bigl(\frac1{\ptwo},\frac1{\qtwo}\bigr)$ given by
\begin{equation}\label{p2q2}\begin{cases}
\,\frac1{\ptwo}\ssb=
\frac{n-1}{n+1}\ssf\bigl(\frac n2\ssb-\ssb\frac2{\gamma-1}\bigr)\ssf,\\
\,\frac1{\qtwo}\ssb=
\frac1{n+1}\ssf\bigl(\frac4{\gamma-1}\ssb-\ssb\frac{n-1}2\bigr)\ssf.
\end{cases}\end{equation}
As \ssf$\gamma$ \ssf varies
between \ssf$\gammaconf$ \ssf and \ssf$\gammathree$\ssf,
this point moves on the line (\ref{2lines}.a)
between the Strichartz point
$\bigl(\frac12\ssb-\ssb\frac1{n+1},\frac12\ssb-\ssb\frac1{n+1}\bigr)$
and the Keel--Tao endpoint
$\bigl(\frac12,\frac12\ssb-\ssb\frac1{n-1}\bigr)$,
where it exists the square
\ssf$[\ssf0,\frac12\ssf]\times\ssb(0,\frac12\ssf]$\ssf.
Thus (\ref{conditions4}.a) and (\ref{conditions4}.d.ii)
determine the following regions\,:

\begin{figure}[ht]
\begin{center}
\psfrag{0}[c]{\color{red}$0$}
\psfrag{1/2}[c]{$\frac12$}
\psfrag{1/2red}[c]{\color{red}$\frac12$}
\psfrag{1/p}[c]{$\frac1p$}
\psfrag{1/q}[c]{$\frac1q$}
\psfrag{(1/p2,1/q2)}[c]{\color{red}$\bigl(\frac1{\ptwo},\frac1{\qtwo}\bigr)$}
\psfrag{1/2-1/(n-1)}[l]{\color{red}$\frac12\!-\!\frac1{n-1}$}
\psfrag{linea}[l]{(\ref{2lines}.a)}
\psfrag{linedii}[l]{(\ref{2lines}.d.ii)}
\psfrag{2/n(gamma-1)}[c]{\color{red}$\frac2{n\ssf(\gamma-1)}$}
\includegraphics[height=50mm]{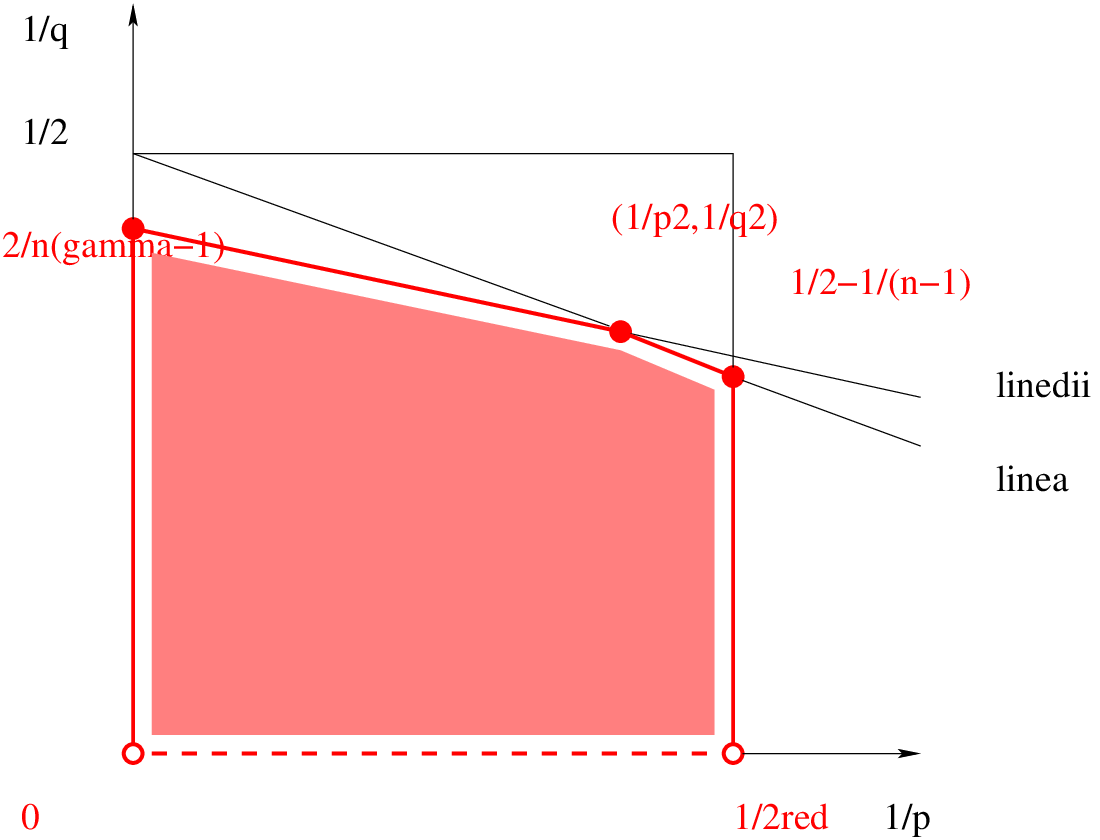}
\caption{\textit{Case 4}\,: \,$\gammaconf\ssb\le\gamma\ssb\le\ssb\gammathree$}
\label{polygon1}
\end{center}
\end{figure}

\newpage

\begin{figure}[ht]
\begin{center}
\psfrag{0}[c]{\color{red}$0$}
\psfrag{1/2}[c]{$\frac12$}
\psfrag{1/2red}[c]{\color{red}$\frac12$}
\psfrag{1/p}[c]{$\frac1p$}
\psfrag{1/q}[c]{$\frac1q$}
\psfrag{linea}[l]{(\ref{2lines}.a)}
\psfrag{linedii}[l]{(\ref{2lines}.d.ii)}
\psfrag{2/n(gamma-1)}[c]{\color{red}$\frac2{n\ssf(\gamma-1)}$}
\psfrag{1/n(2/(gamma-1)-1/2)}[l]{\color{red}$\frac1n\bigl(\frac2{\gamma-1}\!-\!\frac12\bigr)$}
\includegraphics[height=50mm]{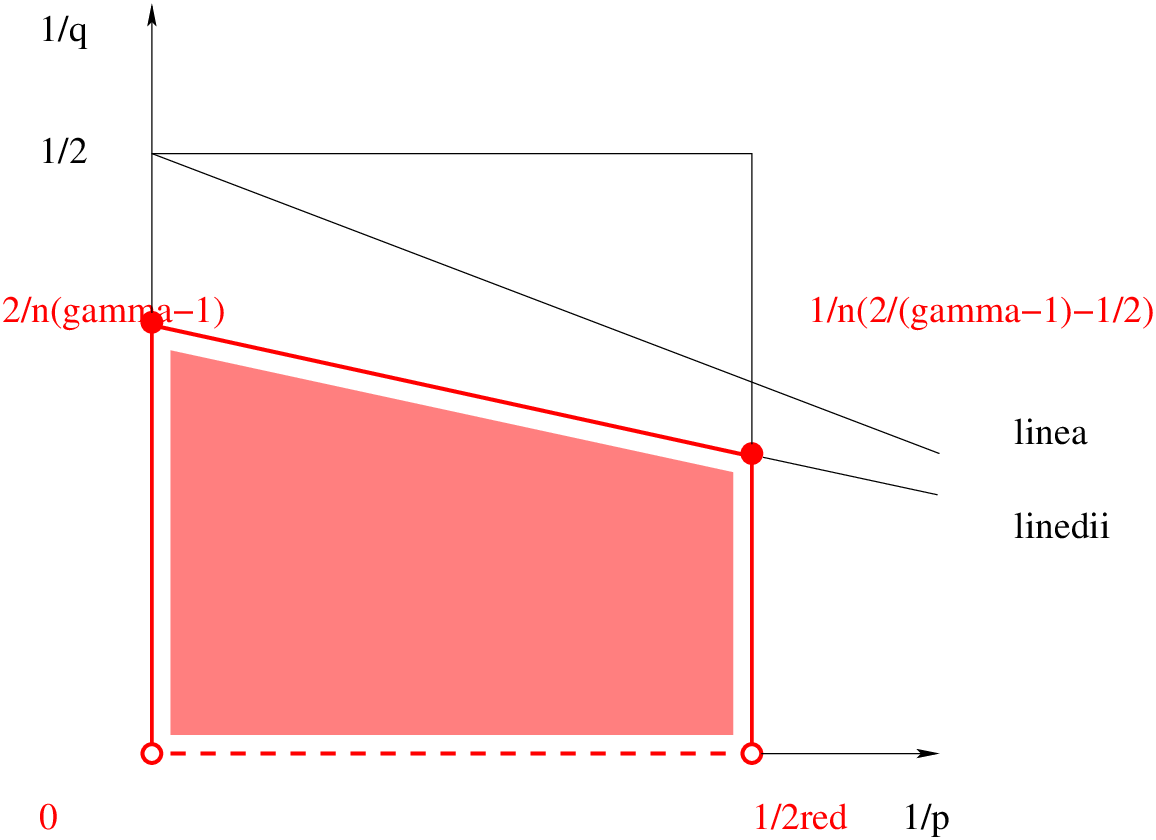}
\caption{\textit{Case 5}\,: \,$\gammathree\ssb\le\gamma\ssb\le\ssb\gammafour$}
\label{polygon2}
\end{center}
\end{figure}

\noindent
For later use, notice that the minimal regularity
\begin{equation}\label{sigma}\textstyle
\sigma\ssb
=\ssb\frac n2\bigl(\frac12\!-\!\frac1q\bigr)\!-\ssb\frac1p\ssb
\ge\ssb\sigma_3(\gamma)
\end{equation}
is reached on the boundary line (\ref{2lines}.d.ii)
and that
\begin{equation}\label{2gamma}
\ptwo\ssb<2\ssf\gamma\,.
\end{equation}
This inequality holds indeed when \ssf$\gamma\ssb=\ssb\gammaconf$
\ssf and it remains true as \ssf$\gamma$ \ssf increases while \ssf$\ptwo$ decreases.
\smallskip

Let us next discuss all conditions \eqref{conditions4},
first in high dimensions and next in low dimensions.
\smallskip

\noindent$\blacktriangleright$
\,Assume that \ssf$n\ssb\ge\ssb6$\ssf.
\smallskip

Firstly notice that (\ref{conditions4}.e) is trivially satisfied in this case.
On one hand, we have indeed \ssf$\gamma\ssb\le\ssb\gammafour\ssb\le2$\ssf.
On the other hand, it follows from (\ref{conditions4}.d.ii) that
\vspace{.5mm}

\centerline{$
\frac1q\ssb\le\ssb\frac2{n\,(\ssf\gamma\ssf-\ssf1\ssf)}
\le\frac2{n\,(\ssf\gammaconf\,-\ssf1\ssf)}
=\frac12\ssf\bigl(1\ssb-\ssb\frac1n\bigr)<\frac12\,.
$}

\noindent
Hence \ssf$\gamma\ssb\le\ssb2\ssb<\ssb q$\ssf.
\smallskip

Secondly we claim that
(\ref{conditions4}.a), (\ref{conditions4}.$\tilde{\text{a}}$),
(\ref{conditions4}.c), (\ref{conditions4}.d.ii)
reduce to the conditions
\begin{equation}\label{aacd}\begin{cases}
\,\text{(a)}&\hspace{-2mm}
\frac\gamma q\ssb+\ssb\frac1{\tilde{q}}\ssb
\le\ssb\frac{\gamma+1}2\ssb-\ssb\frac2{n-1}\\
\,\text{(d.ii)}&\hspace{-2mm}
\frac\gamma q\ssb+\ssb\frac{n-1}{2\,n}\ssf\frac1q\ssb
\le\ssb\frac{n+3}{4\,n}\ssb+\ssb\frac2n\ssf\frac1{\gamma-1}
\end{cases}\end{equation}
in the rectangle
\begin{equation}\label{SQUARE3}\textstyle
R=\bigl(\ssf0\ssf,
\frac1n\ssf\bigl(\frac2{\gamma-1}\!-\!\frac1{2\ssf\gamma}\bigr)\ssf\bigr]
\ssb\times\ssb\bigl(\ssf0\ssf,\frac12\!-\!\frac{2-\gamma}{n-1}\ssf\bigr]\,.
\end{equation}
Actually they even reduce to the single condition (\ref{aacd}.d.ii)
if \ssf$\gamma\ssb\ge\ssb\gammathree$\ssf.
All these claims are obtained again
by examining the following four quadrant figures.

\newpage

\begin{figure}[ht]
\begin{center}
\psfrag{1/p}[c]{$\frac1p$}
\psfrag{1/ptilde}[c]{$\frac1{\tilde{p}}$}
\psfrag{1/q}[c]{$\frac1q$}
\psfrag{1/qtilde}[c]{$\frac1{\tilde{q}}$}
\psfrag{(1/p2,1/q2)}[c]{\color{red}$\bigl(\frac1{\ptwo},\frac1{\qtwo}\bigr)$}
\psfrag{(1/tildep2,1/tildeq2)}[c]{\color{red}$\bigl(\frac1{\tilde{p}_2},\frac1{\tilde{q}_2}\bigr)$}
\psfrag{(1/p2,1/tildep2)}[c]{\color{red}$\bigl(\frac1{\ptwo},\frac1{\tilde{p}_2}\bigr)$}
\psfrag{(1/q2,1/tildeq2)}[c]{\color{red}$\bigl(\frac1{\qtwo},\frac1{\tilde{q}_2}\bigr)$}
\psfrag{1/2}[c]{$\frac12$}
\psfrag{1/2right}[r]{$\frac12$}
\psfrag{1/2-1/(n-1)}[c]{$\frac12\!-\!\frac1{n-1}$}
\psfrag{1/2gamma}[c]{$\frac1{2\ssf\gamma}$}
\psfrag{1-gamma/2}[c]{$1\!-\!\frac\gamma2$}
\psfrag{1/n(2/(gamma-1)-1/2gamma)}[c]{\color{red}$\frac1n\ssf\bigl(\frac2{\gamma-1}\!-\!\frac1{2\ssf\gamma}\bigr)$}
\psfrag{1/2-(2-gamma)/(n-1)}[l]{\color{red}$\frac12\ssb-\ssb\frac{2-\gamma}{n-1}$}
\psfrag{conditions4.c}[c]{(\ref{conditions4}.c)}
\psfrag{2lines.a}[c]{(\ref{2lines}.a)}
\psfrag{2lines.d.ii}[c]{(\ref{2lines}.d.ii)}
\psfrag{edges1.a}[c]{(\ref{edges1}.a)}
\psfrag{edges1.d.ii}[c]{(\ref{edges1}.d.ii)}
\includegraphics[height=110mm]{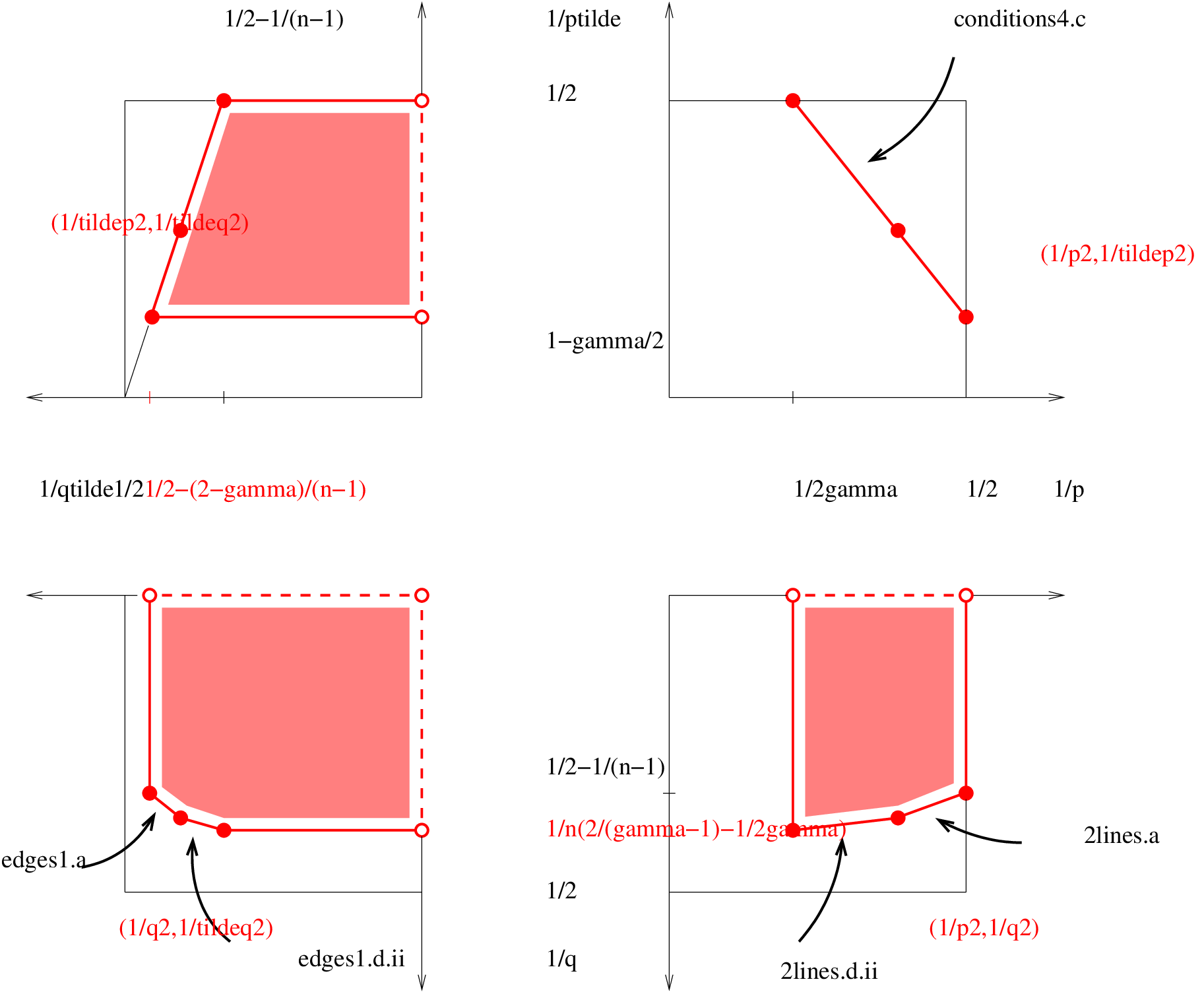}
\caption{\textit{Case 4}\,:
\,$\gammaconf\ssb\le\ssb\gamma\ssb\le\ssb\gammathree$}
\label{FIGURE3}
\end{center}
\end{figure}

\newpage

\begin{figure}[ht]
\begin{center}
\psfrag{1/p}[c]{$\frac1p$}
\psfrag{1/ptilde}[c]{$\frac1{\tilde{p}}$}
\psfrag{1/q}[c]{$\frac1q$}
\psfrag{1/qtilde}[c]{$\frac1{\tilde{q}}$}
\psfrag{1/2}[c]{$\frac{1\vphantom{\gamma}}2$}
\psfrag{1/2-1/(n-1)}[c]{$\frac12\!-\!\frac1{n-1}$}
\psfrag{1/2gamma}[c]{$\frac1{2\ssf\gamma}$}
\psfrag{1-gamma/2}[c]{$1\!-\!\frac\gamma2$}
\psfrag{1/n(2/(gamma-1)-1/2)}[c]
{$\frac1n\ssf\bigl(\frac2{\gamma-1}\!-\!\frac12\bigr)$}
\psfrag{1/n(2/(gamma-1)-1/2gamma)}[c]
{\color{red}$\frac1n\ssf\bigl(\frac2{\gamma-1}\!-\!\frac1{2\ssf\gamma}\bigr)$}
\psfrag{1/2-(2-gamma)/(n-1)}[l]{\color{red}$\frac12\ssb-\ssb\frac{2-\gamma}{n-1}$}
\psfrag{conditions4.c}[c]{(\ref{conditions4}.c)}
\psfrag{2lines.d.ii}[c]{(\ref{2lines}.d.ii)}
\psfrag{edges1.d.ii}[c]{(\ref{edges1}.d.ii)}
\includegraphics[height=110mm]{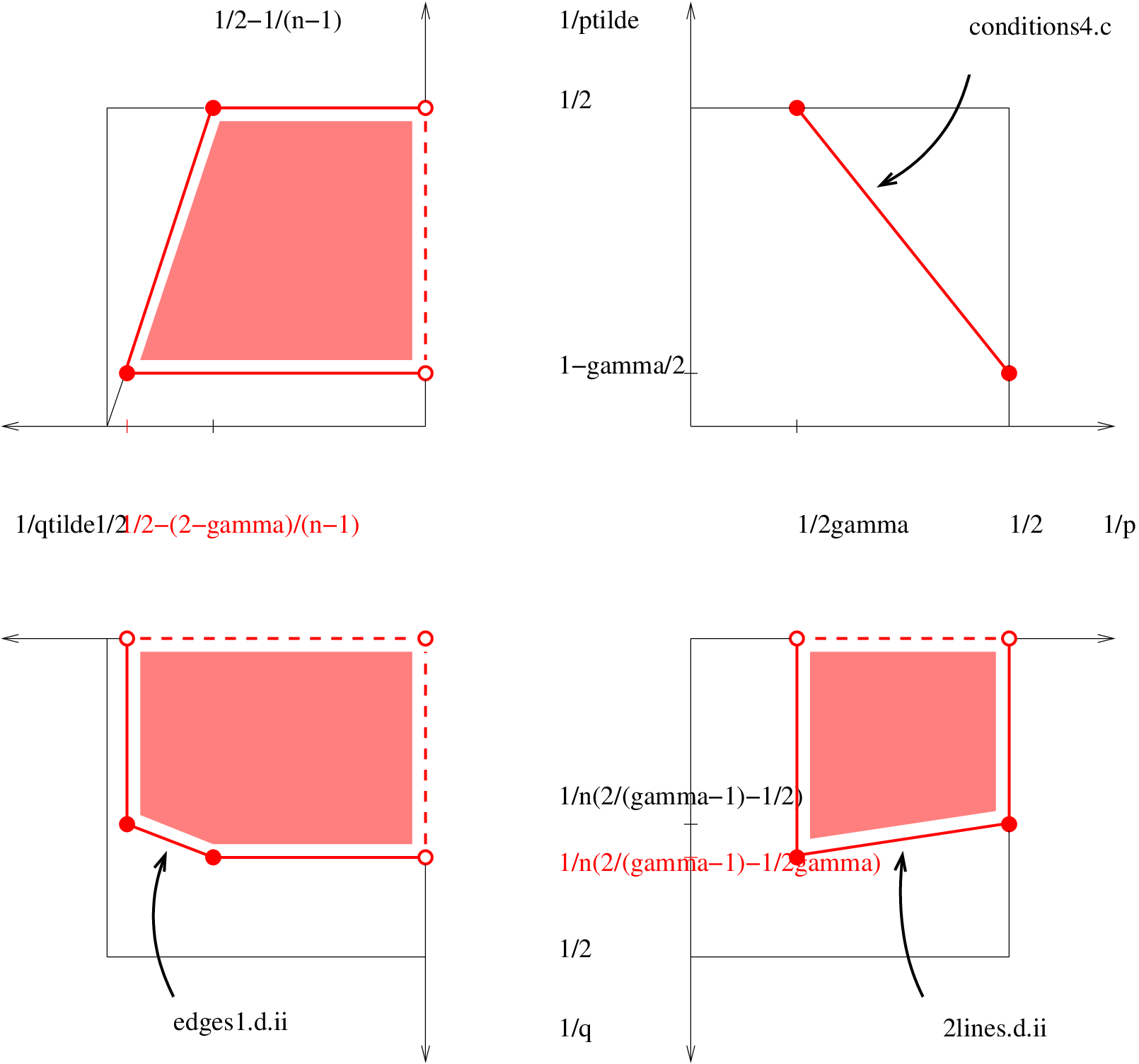}
\caption{\textit{Case 5}\,:
\,$\gammathree\ssb\le\ssb\gamma\ssb\le\ssb\gammafour$}
\label{FIGURE4}
\end{center}
\end{figure}

\newpage

\noindent
Thirdly, in the plane with coordinates $\bigl(\frac1q,\frac1{\tilde{q}}\bigr)$,
the conditions (\ref{conditions4}.d.i), (\ref{aacd}.a), (\ref{aacd}.d.ii)
define the convex region \ssf$C$ in Figure \ref{Convex} with edges
\begin{equation}\label{edges1}\begin{cases}
\,\text{(a)}&\hspace{-2mm}
\frac\gamma q\ssb+\ssb\frac1{\tilde{q}}\ssb
=\ssb\frac{\gamma+1}2\ssb-\ssb\frac2{n-1}\,,\\
\,\text{(d.i)}&\hspace{-2mm}
\frac\gamma q\ssb+\ssb\frac1{\tilde{q}}\ssb=\ssb1\,,\\
\,\text{(d.ii)}&\hspace{-2mm}
\frac\gamma q\ssb+\ssb\frac{n-1}{2\,n}\ssf\frac1{\tilde{q}}\ssb
=\ssb\frac{n+3}{4\,n}\ssb+\ssb\frac2n\ssf\frac1{\gamma-1}\,,
\end{cases}\end{equation}
and with vertices given by
\begin{equation}\label{vertices}\begin{cases}
\;\frac1{\qtwo}\ssb=
\frac4{n+1}\ssf\frac1{\gamma-1}\ssb
-\ssb\frac12\ssf\frac{n-1}{n+1}\,,
&\frac1{\tildeqtwo}\ssb
=\ssb\frac n{n+1}\ssf\gamma\ssb-\ssb\frac4{n+1}\ssf\frac1{\gamma-1}\ssb
+\ssb\frac12\ssb-\ssb\frac2{n-1}\ssb-\ssb\frac4{n+1}\,,\\
\;\frac1{\qthree}\ssb=
\frac4{n+1}\ssf\frac1{\gamma-1}\ssb
-\ssb\frac12\ssf\frac{n+3}{n+1}\ssf\frac1\gamma\,,
&\frac1{\tildeqthree}\ssb
=\ssb\frac32\ssf\frac{n-1}{n+1}\ssb
-\ssb\frac4{n+1}\ssf\frac1{\gamma-1}\ssf.
\end{cases}\end{equation}

\begin{figure}[ht]
\begin{center}
\psfrag{a}[c]{(\ref{edges1}.a)}
\psfrag{d.i}[c]{(\ref{edges1}.d.i)}
\psfrag{d.ii}[c]{(\ref{edges1}.d.ii)}
\psfrag{(1/q2,1/tildeq2)}[l]{\color{blue}$\bigl(\frac1{\qtwo},\frac1{\tildeqtwo}\bigr)$}
\psfrag{(1/q3,1/tildeq3)}[r]{\color{blue}$\bigl(\frac1{\qthree},\frac1{\tildeqthree}\bigr)$}
\includegraphics[height=75mm]{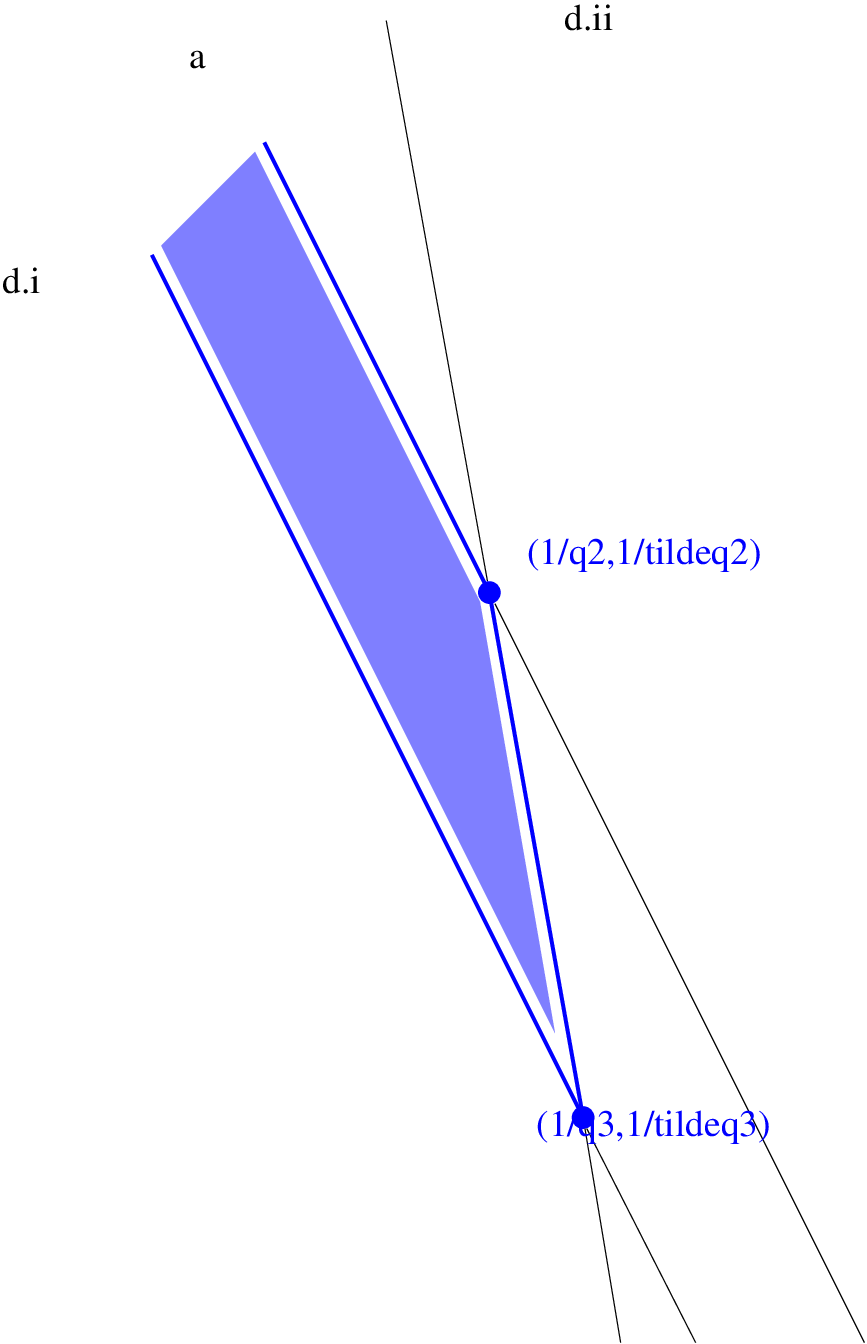}
\caption{Convex region \,$C$}
\label{Convex}
\end{center}
\end{figure}

\noindent
In order to conclude, it remains for us
to determine the possible intersections of the convex region \ssf$C$ \ssf above
with the rectangle \ssf$R$ \ssf defined by \eqref{SQUARE3}
and in each case the minimal regularity
$\sigma\hspace{-.5mm}=\ssb n\ssf\bigl(\frac12\!-\!\frac1q)\!-\!\frac1p$\ssf.

\newpage

\noindent$\bullet$
\textit{\,Case 4}\,:
\,$\gammaconf\ssb\le\ssb\gamma\ssb\le\ssb\gammathree$
\vspace{-2mm}

\begin{figure}[ht]
\begin{center}
\psfrag{0}[r]{$0$}
\psfrag{1/q}[c]{$\frac1q$}
\psfrag{1/qtilde}[r]{$\frac1{\tilde{q}}$}
\psfrag{(1/q2,1/tildeq2)}[c]{\color{violet}$(\frac1{q_2},\frac1{\tilde q_2})$}
\psfrag{(1/q3,1/tildeq3)}[c]{\color{violet}$(\frac1{q_3},\frac1{\tilde q_3})$}
\psfrag{1/n(2/(gamma-1)-1/2gamma)}[c]
{\color{red}$\frac1n\ssf(\frac2{\gamma\ssf-1}\!-\!\frac1{2\ssf\gamma})$}
\psfrag{1/2-(2-gamma)/(n-1)}[r]
{\color{red}$\frac12\!-\!\frac{2\ssf-\ssf\gamma}{n\ssf-1}$}
\psfrag{a}[l]{(\ref{edges1}.a)}
\psfrag{d.i}[l]{(\ref{edges1}.d.i)}
\psfrag{d.ii}[l]{(\ref{edges1}.d.ii)}
\includegraphics[height=60mm]{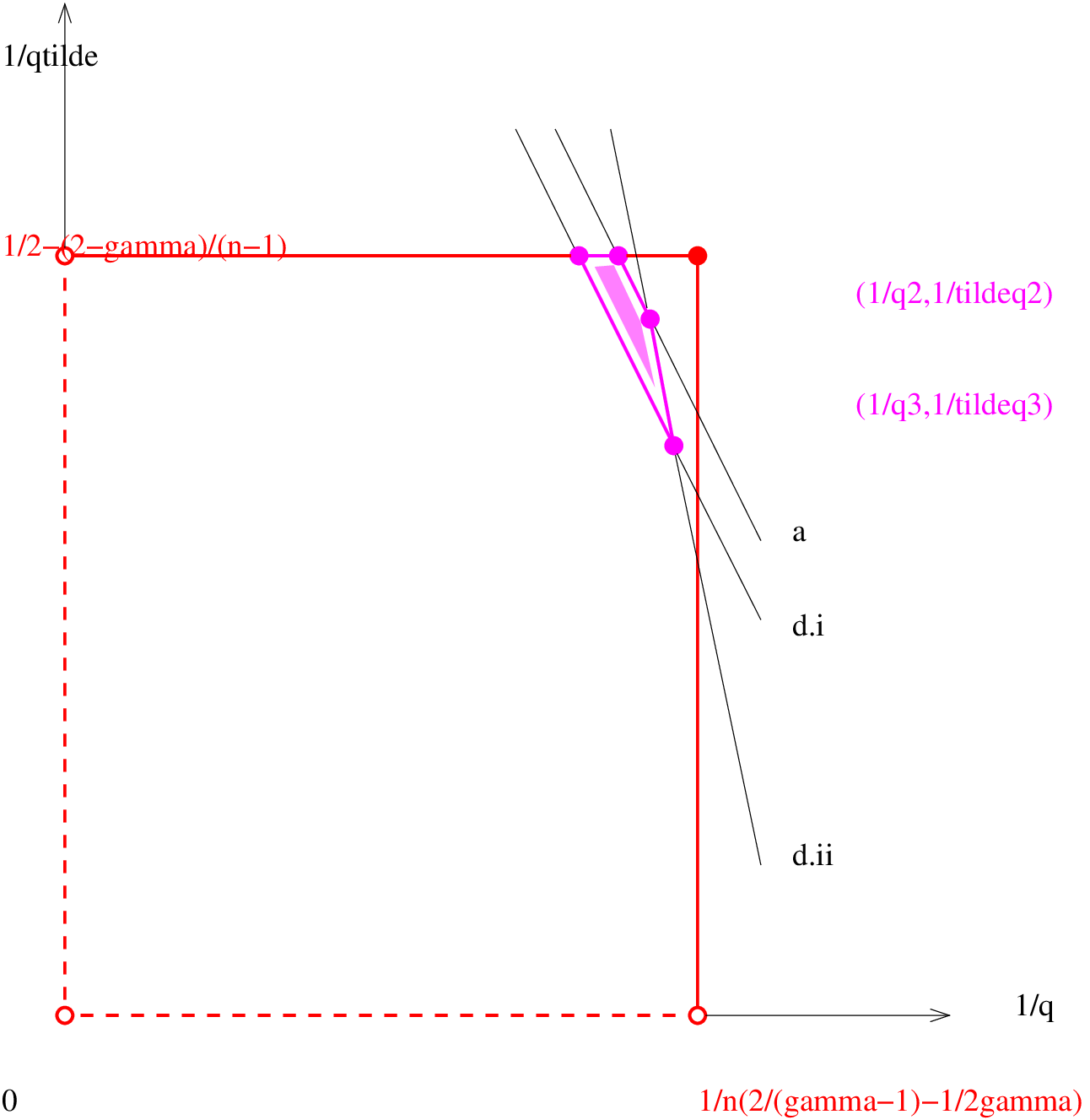}
\caption{\textit{Case 4}\,:
\,$\gammaconf\ssb\le\!\gamma\ssb\le\ssb\gammathree$}
\label{Case4}
\end{center}
\end{figure}

\noindent$\bullet$
\textit{\,Case 5}\,:
\,$\gammathree\ssb\le\ssb\gamma\ssb\le\ssb\gammafour$
\vspace{-2mm}

\begin{figure}[ht]
\begin{center}
\psfrag{0}[r]{$0$}
\psfrag{1/q}[c]{$\frac1q$}
\psfrag{1/qtilde}[r]{$\frac1{\tilde{q}}$}
\psfrag{(1/q2,1/tildeq2)}[c]{\color{violet}$(\frac1{q_2},\frac1{\tilde q_2})$}
\psfrag{(1/q3,1/tildeq3)}[c]{\color{violet}$(\frac1{q_3},\frac1{\tilde q_3})$}
\psfrag{1/n(2/(gamma-1)-1/2gamma)}[c]
{\color{red}$\frac1n\ssf(\frac2{\gamma\ssf-1}\!-\!\frac1{2\ssf\gamma})$}
\psfrag{1/2-(2-gamma)/(n-1)}[r]
{\color{red}$\frac12\!-\!\frac{2\ssf-\ssf\gamma}{n\ssf-1}$}
\psfrag{a}[l]{(\ref{edges1}.a)}
\psfrag{d.i}[l]{(\ref{edges1}.d.i)}
\psfrag{d.ii}[l]{(\ref{edges1}.d.ii)}
\includegraphics[height=60mm]{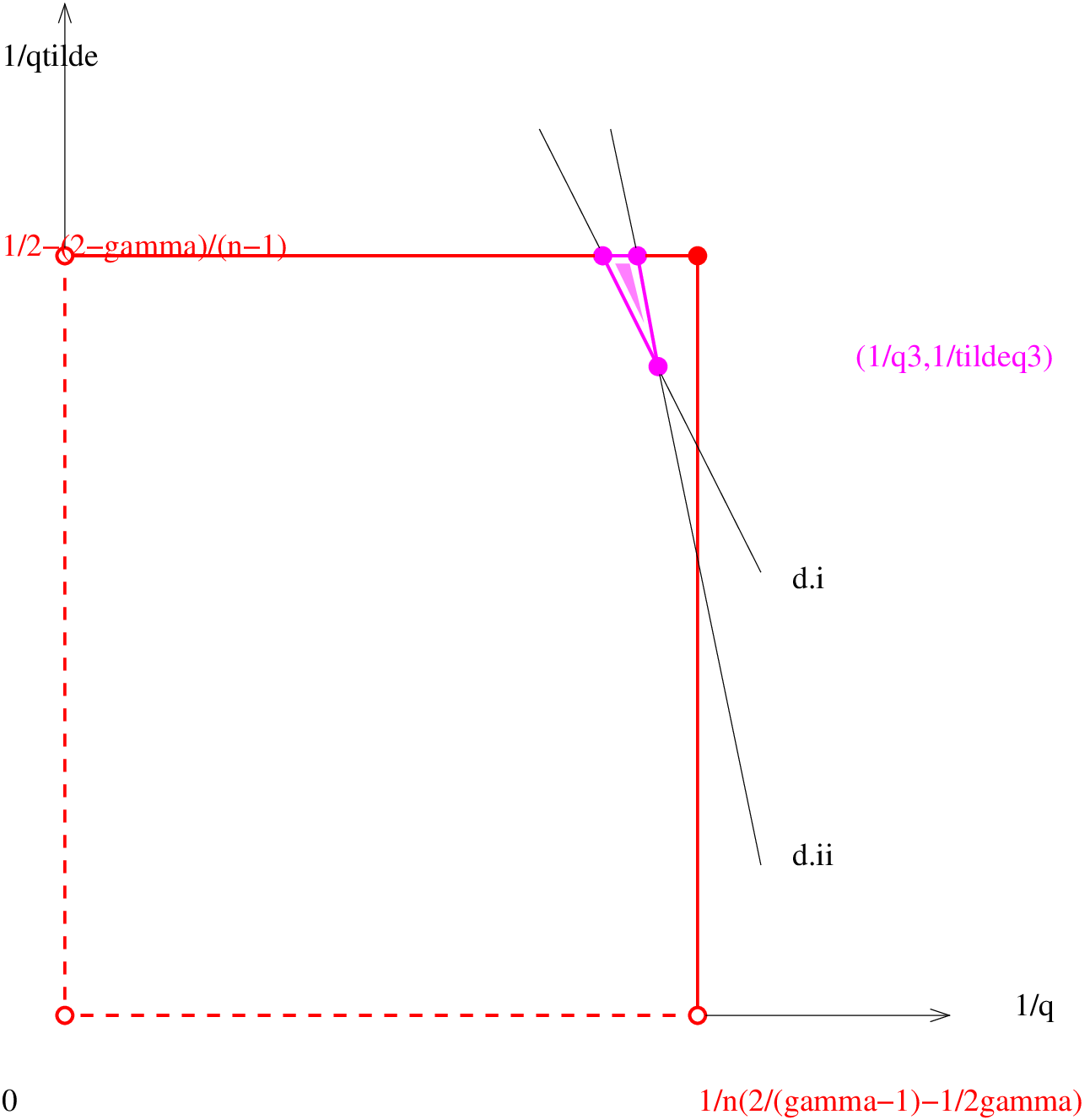}
\caption{\textit{Case 5}\,:
\,$\gammathree\ssb\le\!\gamma\ssb\le\ssb\gammafour$}
\label{Case5}
\end{center}
\end{figure}

\noindent
In both cases, the minimal regularity
\ssf$\sigma\ssb=\ssb\sigma_3(\gamma)$
\ssf is reached when $\bigl(\frac1p,\frac1q\bigr)$
and $\bigl(\frac1q,\frac1{\tilde{q}}\bigr)$
lie on the edges (\ref{2lines}.d.ii) and (\ref{edges1}.d.ii).
See Figures \ref{FIGURE3} and \ref{FIGURE4}.
This concludes the proof of Theorem \ref{GWPND}
for \ssf$\gammaconf\ssb<\ssb\gamma\ssb\le\ssb\gammafour$
\ssf and \ssf$n\ssb\ge\ssb6$\ssf.
\medskip

\noindent$\blacktriangleright$
\,Assume that \ssf$3\ssb\le\ssb n\ssb\le\ssb5$\ssf.
\smallskip

Then \ssf$\gamma\ssb\ge\ssb\gammaconf\ssb\ge\ssb2$
\ssf and the four quadrant figures become

\newpage

\begin{figure}[ht]
\begin{center}
\psfrag{1/p}[c]{$\frac1p$}
\psfrag{1/ptilde}[c]{$\frac1{\tilde{p}}$}
\psfrag{1/q}[c]{$\frac1q$}
\psfrag{1/qtilde}[c]{$\frac1{\tilde{q}}$}
\psfrag{(1/p2,1/q2)}[c]{\color{red}$\bigl(\frac1{\ptwo},\frac1{\qtwo}\bigr)$}
\psfrag{(1/tildep2,1/tildeq2)}[c]{\color{red}$\bigl(\frac1{\tilde{p}_2},\frac1{\tilde{q}_2}\bigr)$}
\psfrag{(1/p2,1/tildep2)}[c]{\color{red}$\bigl(\frac1{\ptwo},\frac1{\tilde{p}_2}\bigr)$}
\psfrag{(1/q2,1/tildeq2)}[c]{\color{red}$\bigl(\frac1{\qtwo},\frac1{\tilde{q}_2}\bigr)$}
\psfrag{1/2}[c]{$\frac12$}
\psfrag{1/2right}[r]{$\frac12$}
\psfrag{1/2-1/(n-1)}[c]{$\frac12\!-\!\frac1{n-1}$}
\psfrag{1/gamma}[c]{$\frac1\gamma$}
\psfrag{1/2gamma}[c]{$\frac1{2\ssf\gamma}$}
\psfrag{1/2-2/(n-1)1/gamma}[c]
{$\frac12\ssb-\ssb\frac2{n\ssf-1}\ssf\frac1\gamma$}
\psfrag{1/n(2/(gamma-1)-1/2gamma)}[c]{\color{red}$\frac1n\ssf\bigl(\frac2{\gamma-1}\!-\!\frac1{2\ssf\gamma}\bigr)$}
\psfrag{conditions4.c}[c]{(\ref{conditions4}.c)}
\psfrag{2lines.a}[c]{(\ref{2lines}.a)}
\psfrag{2lines.d.ii}[c]{(\ref{2lines}.d.ii)}
\psfrag{edges1.a}[c]{(\ref{edges1}.a)}
\psfrag{edges1.d.ii}[c]{(\ref{edges1}.d.ii)}
\includegraphics[height=110mm]{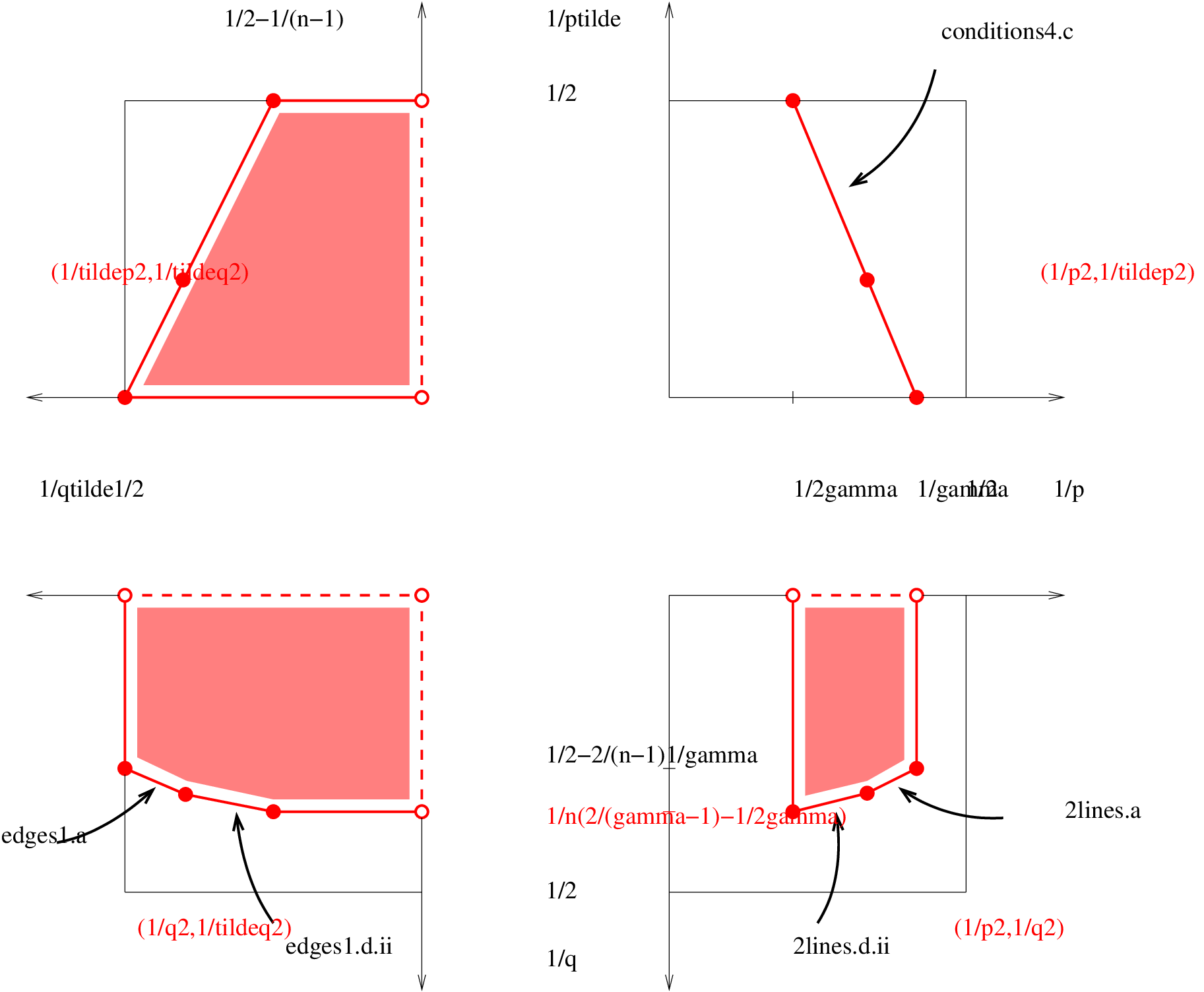}
\caption{\textit{Case 4}\,:
\,$\gammaconf\ssb\le\ssb\gamma\ssb\le\ssb\gammathree$}
\label{FIGURE5}
\end{center}
\end{figure}

\newpage

\begin{figure}[ht]
\begin{center}
\psfrag{1/p}[c]{$\frac1p$}
\psfrag{1/ptilde}[c]{$\frac1{\tilde{p}}$}
\psfrag{1/q}[c]{$\frac1q$}
\psfrag{1/qtilde}[c]{$\frac1{\tilde{q}}$}
\psfrag{1/2}[c]{$\frac{1\vphantom{\gamma}}2$}
\psfrag{1/2-1/(n-1)}[c]{$\frac12\!-\!\frac1{n-1}$}
\psfrag{1/gamma}[c]{$\frac1\gamma$}
\psfrag{1/2gamma}[c]{$\frac1{2\ssf\gamma}$}
\psfrag{1/n(2/(gamma-1)-1/gamma)}[c]
{$\frac1n\ssf\bigl(\frac2{\gamma-1}\!-\!\frac1\gamma\bigr)$}
\psfrag{1/n(2/(gamma-1)-1/2gamma)}[c]
{\color{red}$\frac1n\ssf\bigl(\frac2{\gamma-1}\!-\!\frac1{2\ssf\gamma}\bigr)$}
\psfrag{conditions4.c}[c]{(\ref{conditions4}.c)}
\psfrag{2lines.d.ii}[c]{(\ref{2lines}.d.ii)}
\psfrag{edges1.d.ii}[c]{(\ref{edges1}.d.ii)}
\includegraphics[height=110mm]{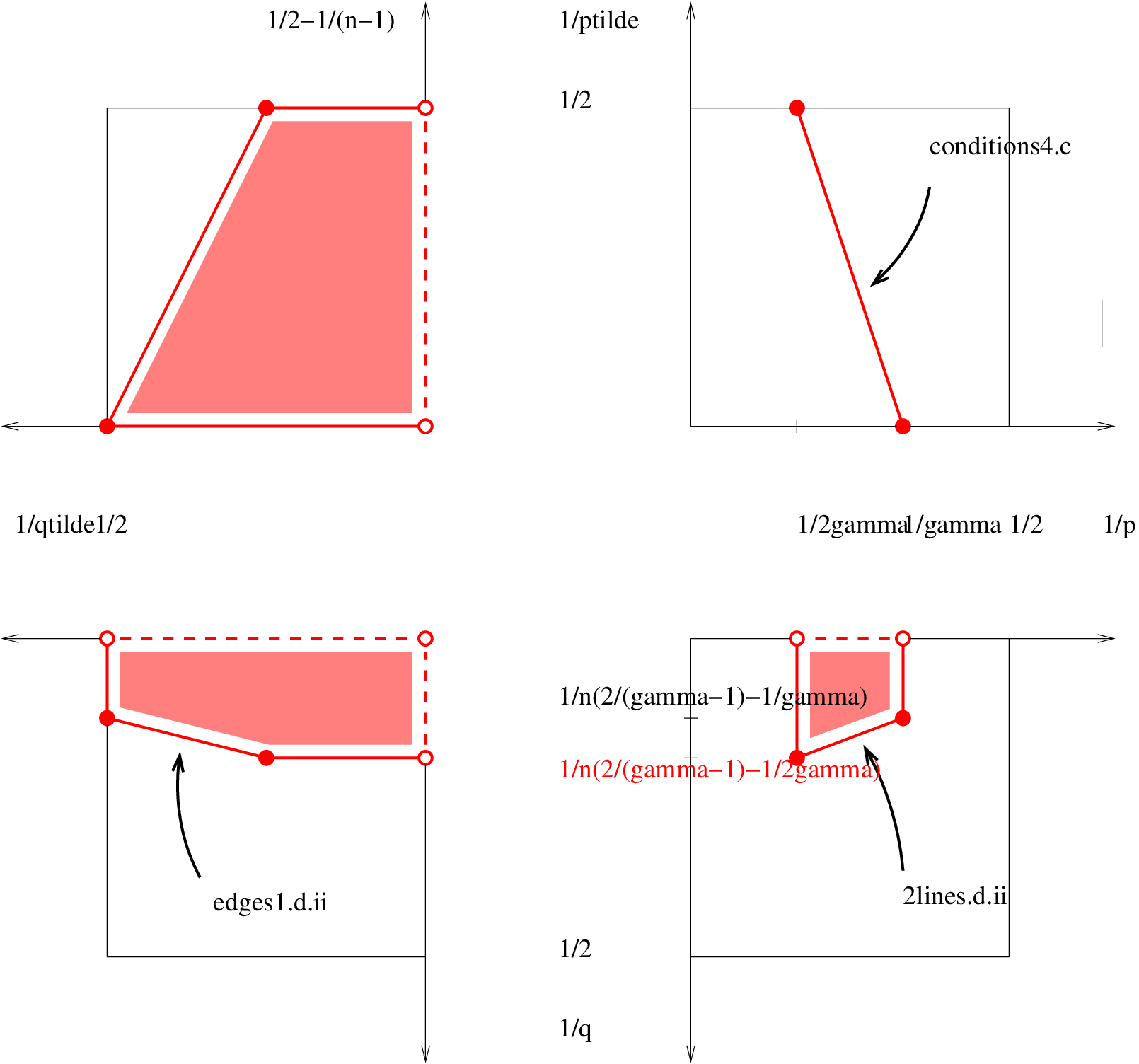}
\caption{\textit{Case 5}\,:
\,$\gammathree\ssb\le\ssb\gamma\ssb\le\ssb\gammafour$}
\label{FIGURE6}
\end{center}
\end{figure}

\newpage

Consequently the four conditions
(\ref{conditions4}.a), (\ref{conditions4}.$\tilde{\text{a}}$),
(\ref{conditions4}.c), (\ref{conditions4}.d.ii)
reduce again to the two conditions (\ref{aacd}.a), (\ref{aacd}.d.ii)
if \ssf$\gammaconf\ssb\le\ssb\gamma\ssb\le\ssb\gammathree$\ssf,
and actually to the single condition (\ref{aacd}.d.ii)
if \ssf$\gammathree\ssb\le\ssb\gamma\ssb\le\ssb\gammafour$\ssf,
but this time in the rectangle
\begin{equation}\label{SQUARE4}\textstyle
R\ssf=\bigl(\ssf0\ssf,
\frac1n\ssf\bigl(\frac2{\gamma-1}\!-\!\frac1{2\ssf\gamma}\bigr)\ssf\bigr]
\ssb\times\ssb\bigl(\ssf0\ssf,\frac12\ssf\bigr]\,.
\end{equation}
Moreover (\ref{conditions4}.e) is satisfied,
as \ssf$\frac1q\!
\le\!\frac1n\ssf(\frac2{\gamma\ssf-1}\!-\!\frac1{2\ssf\gamma})\!
<\!\frac1\gamma$\ssf.
\smallskip

We conclude again by examining
the possible intersections \ssf$C\ssb\cap\ssb R$
\ssf of the convex region defined
by (\ref{conditions4}.d.i), (\ref{aacd}.a), (\ref{aacd}.d.ii)
with the rectangle \eqref{SQUARE4}
and by determining in each case the minimal regularity
$\sigma\hspace{-.5mm}=\ssb n\ssf\bigl(\frac12\!-\!\frac1q)\!-\!\frac1p$\ssf.
\vspace{2mm}

\noindent$\bullet$
\textit{\,Case 4}\,:
\,$\gammaconf\ssb\le\ssb\gamma\ssb\le\ssb\gammathree$
\vspace{-2mm}

\begin{figure}[ht]
\begin{center}
\psfrag{0}[r]{$0$}
\psfrag{1/2}[c]{\color{red}$\frac12$}
\psfrag{1/q}[c]{$\frac1q$}
\psfrag{1/qtilde}[r]{$\frac1{\tilde{q}}$}
\psfrag{(1/q2,1/tildeq2)}[c]{\color{violet}$(\frac1{q_2},\frac1{\tilde q_2})$}
\psfrag{(1/q3,1/tildeq3)}[c]{\color{violet}$(\frac1{q_3},\frac1{\tilde q_3})$}
\psfrag{1/n(2/(gamma-1)-1/2gamma)}[c]
{\color{red}$\frac1n\ssf(\frac2{\gamma\ssf-1}\!-\!\frac1{2\ssf\gamma})$}
\psfrag{1/2-(2-gamma)/(n-1)}[r]
{\color{red}$\frac12\!-\!\frac{2\ssf-\ssf\gamma}{n\ssf-1}$}
\psfrag{a}[l]{(\ref{edges1}.a)}
\psfrag{d.i}[l]{(\ref{edges1}.d.i)}
\psfrag{d.ii}[l]{(\ref{edges1}.d.ii)}
\includegraphics[height=60mm]{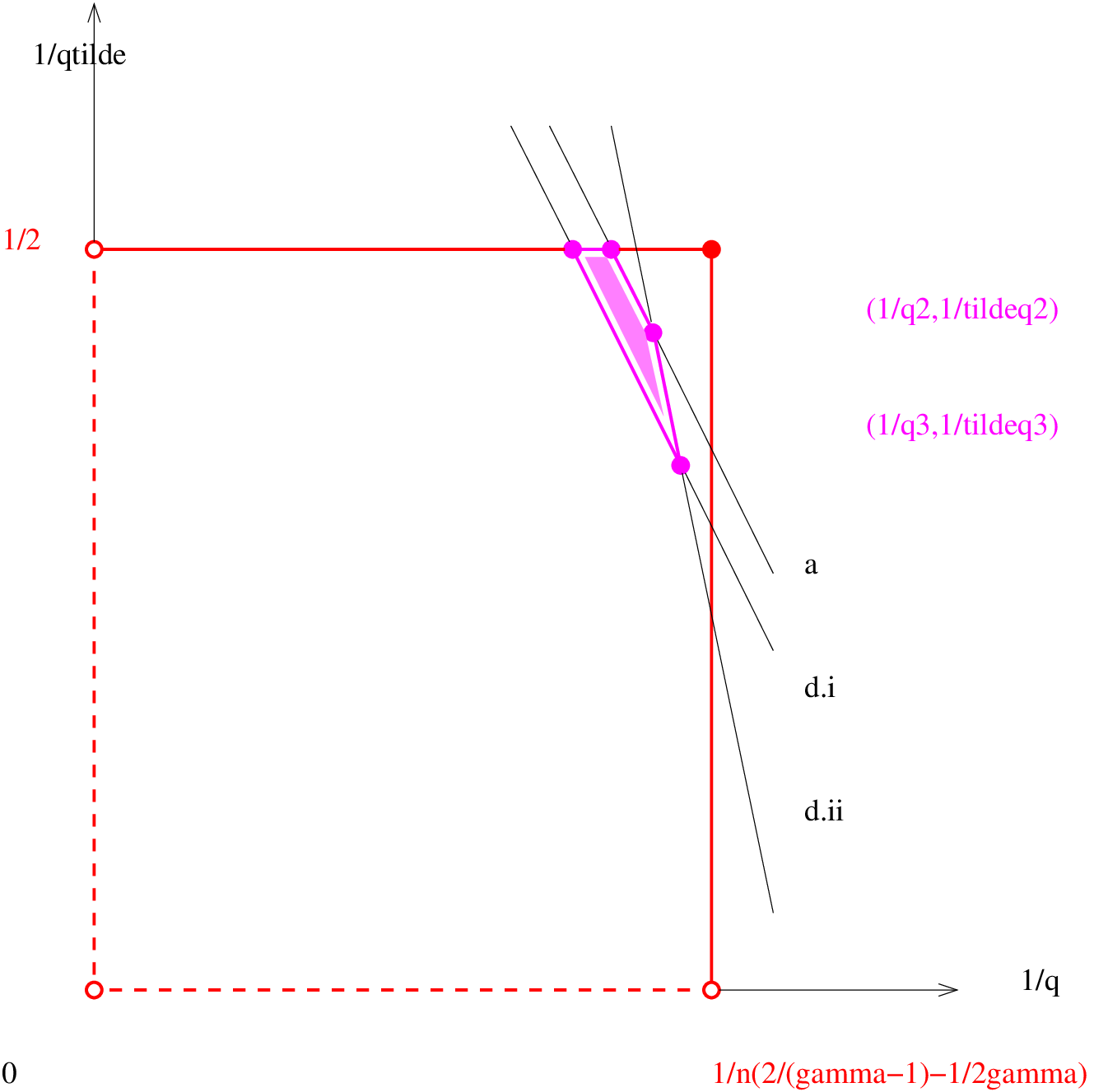}
\caption{\textit{Case 4}\,:
\,$\gammaconf\ssb\le\!\gamma\ssb\le\ssb\gammathree$}
\label{Case4bis}
\end{center}
\end{figure}

\noindent$\bullet$
\textit{\,Case 5}\,:
\,$\gammathree\ssb\le\ssb\gamma\ssb\le\ssb\gammafour$
\vspace{-2mm}

\begin{figure}[ht]
\begin{center}
\psfrag{0}[r]{$0$}
\psfrag{1/2}[c]{\color{red}$\frac12$}
\psfrag{1/q}[c]{$\frac1q$}
\psfrag{1/qtilde}[r]{$\frac1{\tilde{q}}$}
\psfrag{(1/q2,1/tildeq2)}[c]{\color{violet}$(\frac1{q_2},\frac1{\tilde q_2})$}
\psfrag{(1/q3,1/tildeq3)}[c]{\color{violet}$(\frac1{q_3},\frac1{\tilde q_3})$}
\psfrag{1/n(2/(gamma-1)-1/2gamma)}[c]
{\color{red}$\frac1n\ssf(\frac2{\gamma\ssf-1}\!-\!\frac1{2\ssf\gamma})$}
\psfrag{1/2-(2-gamma)/(n-1)}[r]
{\color{red}$\frac12\!-\!\frac{2\ssf-\ssf\gamma}{n\ssf-1}$}
\psfrag{a}[l]{(\ref{edges1}.a)}
\psfrag{d.i}[l]{(\ref{edges1}.d.i)}
\psfrag{d.ii}[l]{(\ref{edges1}.d.ii)}
\includegraphics[height=60mm]{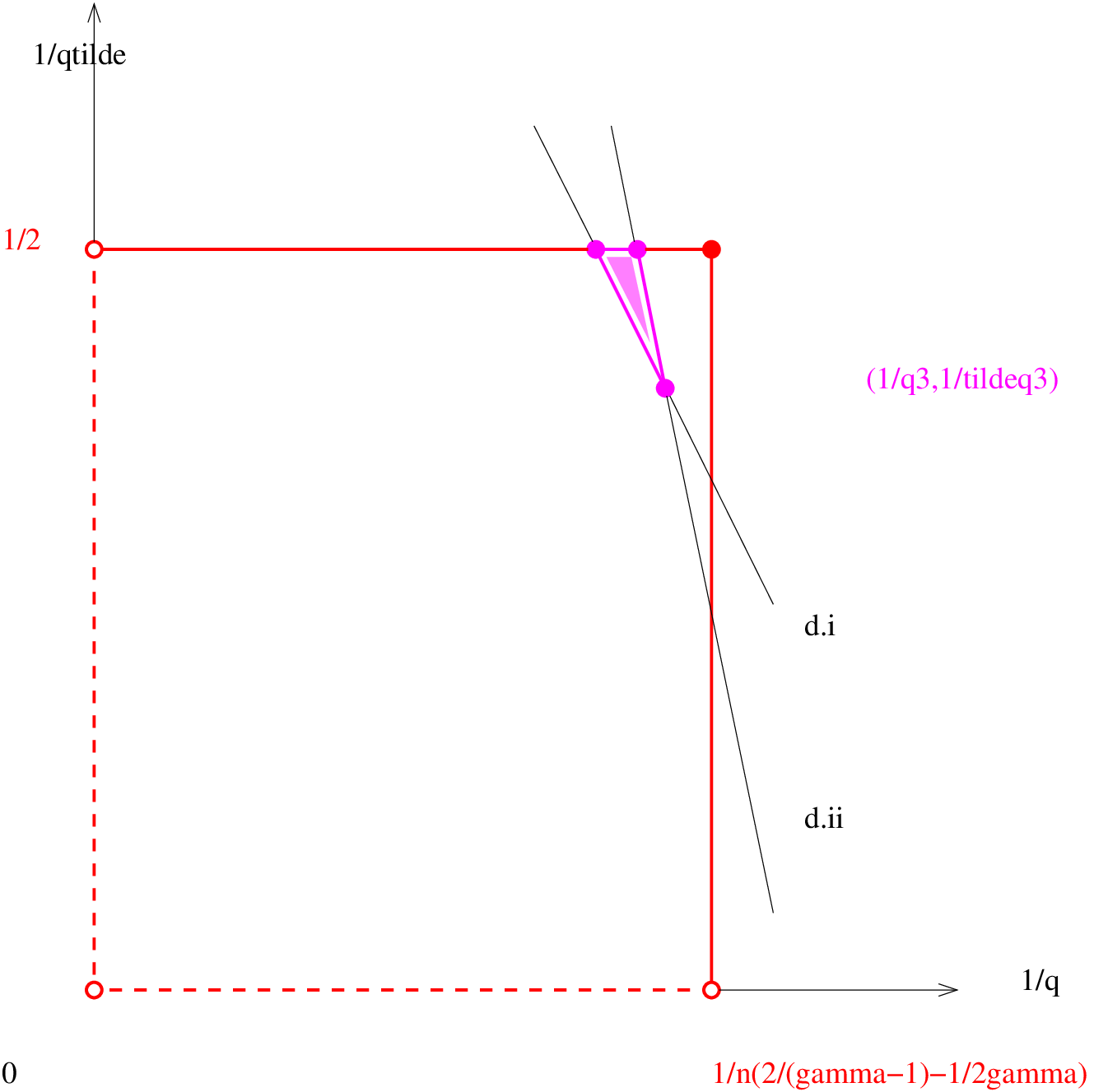}
\caption{\textit{Case 5}\,:
\,$\gammathree\ssb\le\!\gamma\ssb\le\ssb\gammafour$}
\label{Case5bis}
\end{center}
\end{figure}

\noindent
In both cases, the minimal regularity
\ssf$\sigma\ssb=\ssb\sigma_3(\gamma)$
\ssf is reached again when $\bigl(\frac1p,\frac1q\bigr)$
and $\bigl(\frac1q,\frac1{\tilde{q}}\bigr)$
lie on the edges (\ref{2lines}.d.ii) and (\ref{edges1}.d.ii).
See Figures \ref{FIGURE5} and \ref{FIGURE6}.
This concludes the proof of Theorem \ref{GWPND}
for \ssf$\gammaconf\ssb<\ssb\gamma\ssb\le\ssb\gammafour$
\ssf and \ssf$3\ssb\le\ssb n\ssb\le\ssb5$\ssf.
\end{proof}

\begin{remark}
In dimension \,$n\!=\!3$\ssf,
Metcalfe and Taylor \cite{MT} obtain a global existence result
beyond \,$\gamma\!=\!\gammafour$\ssf.
In a forthcoming work,
we shall deal with this case in higher dimensions.
\end{remark}

\begin{remark}
In dimension \,$n\ssb=\ssb2$\ssf,
the statement of Theorem \eqref{GWPND} holds true
with \eqref{RegularityND} replaced by
\begin{equation}\label{Regularity2D}\begin{cases}
\;\sigma\ssb=\ssb0^+
&\text{if \;}1\ssb<\ssb\gamma\ssb\le\ssb2\ssf,\\
\;\sigma\ssb=\ssb\tilde{\sigma}_1(\gamma)^+
&\text{if \;}2\ssb\le\ssb\gamma\ssb\le\ssb3\ssf,\\
\;\sigma\ssb=\ssb\sigma_2(\gamma)
&\text{if \;}3\ssb<\ssb\gamma\ssb<\ssb5\,,\\
\;\sigma\ssb=\ssb\sigma_3(\gamma)^+
&\text{if \;}5\ssb\le\ssb\gamma\ssb<\ssb\infty\ssf.\\
\end{cases}\end{equation}
\end{remark}

\noindent
where \ssf$\tilde{\sigma}_1(\gamma)\ssb
=\ssb\frac34\ssb-\ssb\frac32\ssf\frac1\gamma$\ssf.
Notice that the condition \ssf$q\!>\!\gamma$
\ssf is not redundant if \ssf$2\!<\!\gamma\!<\!3$
\ssf and
\linebreak
\vspace{-4.5mm}

\noindent
that it is actually responsible for the curve \ssf$\tilde{C}_1$.

\begin{figure}[ht]
\begin{center}
\psfrag{0}[c]{$0$}
\psfrag{1}[c]{$1$}
\psfrag{2}[c]{$2$}
\psfrag{3}[c]{$3$}
\psfrag{1/2}[c]{$\frac12$}
\psfrag{1/4}[c]{$\frac14$}
\psfrag{Ctilde1}[c]{\color{red}$\tilde{C}_1$}
\psfrag{C2}[c]{\color{red}$C_2$}
\psfrag{C3}[c]{\color{red}$C_3$}
\psfrag{gamma}[c]{$\gamma$}
\psfrag{gammaconf=5}[c]{$\gammaconf\ssb=\ssb5$}
\psfrag{sigma}[c]{$\sigma$}
\includegraphics[width=125mm]{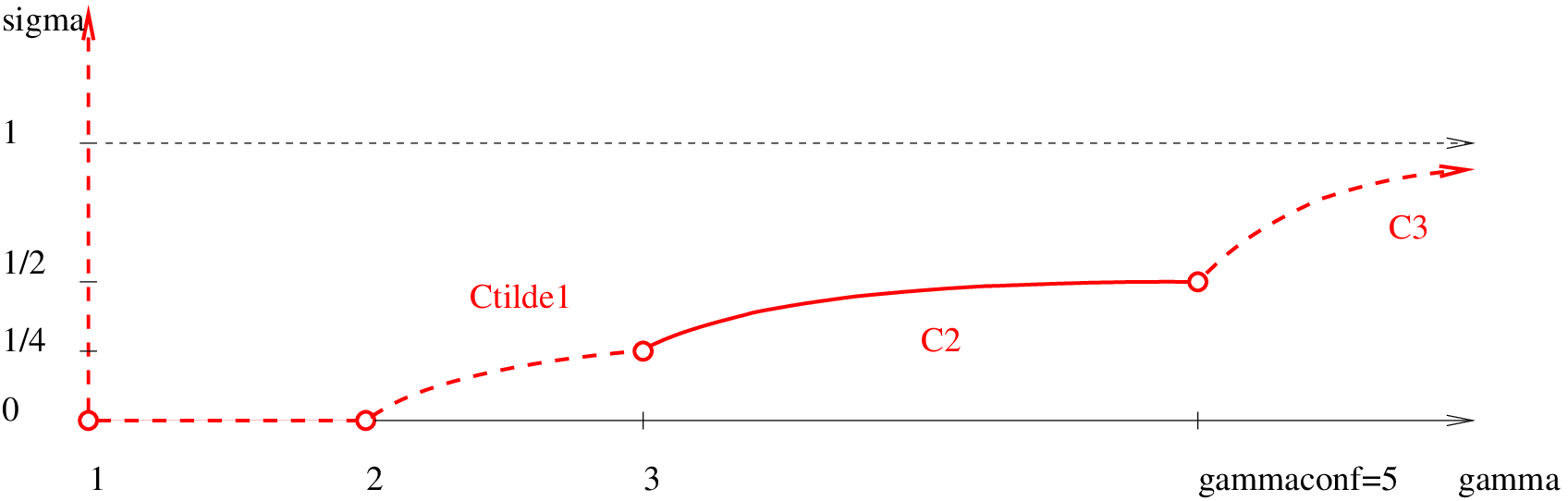}
\caption{Regularity for GWP on $\mathbb{H}^2$}
\label{GlobalRegularity2}
\end{center}
\end{figure}

\section*{Appendix A}
\label{AppendixA}

In this appendix we collect some lemmata in Fourier analysis on \ssf$\R$\ssf,
which are used in the kernel analysis carried out in Section \ref{Kernel}.
\medskip

\noindent
\textbf{Lemma A.1.}
\textit{Consider the oscillatory integral
\begin{equation*}
I(t,x)=\!\int_{-\infty}^{+\infty}\hspace{-1mm}d\lambda\;
a(\lambda)\,e^{\ssf i\ssf t\ssf\phi(\lambda)}
\end{equation*}
where the phase is given by
\begin{equation*}\textstyle
\phi(\lambda)=\sqrt{\ssf\lambda^2\ssb+\kappa^2\ssf}-\frac{x\,\lambda}t
\end{equation*}
{\rm(}recall that \,$\kappa$ is a fixed constant $>\ssb0$\ssf{\rm)}
and the amplitude \,$a\!\in\!\mathcal{C}_c^\infty(\R)$
has the following behavior at the origin
\begin{equation}\label{assumptionA1}
a(\lambda)=\text{O}\ssf(\lambda^2)\,.
\end{equation}
Then}
\begin{equation*}\textstyle
|\ssf I(t,x)\ssf|\,\lesssim\,\frac{1\ssf+\,|x|}{(\ssf1\ssf+\,|t|\ssf)^{3/2}}
\qquad\forall\;|x|\ssb\le\ssb\frac{|t|}2\,.
\end{equation*}

\begin{proof}
Let us compute the first two derivatives
\begin{equation}\label{derivativesphi}\textstyle
\phi'(\lambda)=\frac{\lambda}{\sqrt{\lambda^2\ssf+\,\kappa^2}}-\frac xt
\quad\text{and}\quad
\phi''(\lambda)=\kappa^2\,(\lambda^2\!+\ssb\kappa^2)^{-\frac32}\,.
\end{equation}
The phase \ssf$\phi$ \ssf has a single stationary point\,:
\begin{equation}\label{stationarypoint}\textstyle
\lambda_0=\ssf\kappa\,\frac xt\ssf
\bigl(\ssf1\!-\ssb\frac{x^2}{t^2}\bigr)^{-\frac12}\,,
\end{equation}
which remains bounded under our assumption \ssf$|x|\!\le\!\frac{|t|}2$\,:
\begin{equation}\label{estimatestationarypoint}\textstyle
|\lambda_0|\le\frac\kappa{\sqrt{3\ssf}}\le\kappa\,.
\end{equation}
For later use, let us compute
\begin{equation*}\textstyle
\phi(\lambda_0)=\kappa\,
\bigl(\ssf1\!-\ssb\frac{x^2}{t^2}\bigr)^{\frac12}
\quad\text{and}\quad
\phi''(\lambda_0)=\kappa^{-1}\ssf
\bigl(\ssf1\!-\ssb\frac{x^2}{t^2}\bigr)^{\frac32}\,.
\end{equation*}
Since \ssf$\phi''\!>\!0$\ssf,
we can perform a global change of variables
\,$\lambda\ssb\longleftrightarrow\ssb\mu$
\,on \ssf$\R$ \ssf so that
\begin{equation*}
\phi(\lambda)-\phi(\lambda_0)=\mu^2\,.
\end{equation*}
Specifically,
\begin{equation*}
\mu=\epsilon(\lambda)\,(\lambda\!-\!\lambda_0)\,,
\end{equation*}
where
\begin{equation*}
\epsilon(\lambda)=\Bigl\{\ssf\int_{\,0}^{\ssf1}\!ds\,(1\!-\!s)\,
\phi''\bigl((1\!-\!s)\lambda_0\ssb+\ssb s\ssf\lambda\ssf\bigr)
\ssf\Bigr\}^{\frac12}\ssf.
\end{equation*}
This way, our oscillatory integral becomes
\begin{equation}\textstyle
I(t,x)=\ssf e^{\ssf i\ssf t\ssf\phi(\lambda_0)}
{\displaystyle\int_{\ssf\R}}\,d\mu\;
\tilde{a}(\mu)\,e^{\ssf(-1+i\ssf t\ssf)\ssf\mu^2}\,,
\end{equation}
where
\begin{equation*}\textstyle
\tilde{a}(\mu)=\frac{d\lambda}{d\mu}\,a(\lambda(\mu))\,e^{\ssf\mu^2}
\end{equation*}
is again a smooth function with compact support,
whose derivatives are controlled uniformly in \ssf$t$ \ssf and \ssf$x$\ssf,
as long as \ssf$|x|\!\le\!\frac{|t|}2$\ssf.
Using Taylor's formula, let us expand
\begin{equation*}
\tilde{a}(\mu)
=\ssf\sum\nolimits_{\ssf j=0}^{\,3}\tilde{a}_j\,\mu^{\ssf j}
+\,\tilde{a}_4(\mu)\,\mu^4\ssf,
\end{equation*}
where
\begin{equation*}\textstyle
\tilde{a}_0
=\bigl(\frac2{\phi''(\lambda_0)}\bigr)^{\frac12}\,a(\lambda_0\ssf)
=\text{O}\ssf(\lambda_0^2\ssf)
=\text{O}\ssf(\frac{x^2}{t^2})\,,
\end{equation*}
the other constants \ssf$\tilde{a}_1$, $\tilde{a}_2$\ssf, $\tilde{a}_3$
and the function \ssf$\tilde{a}_4(\mu)$\ssf, as well as its derivatives,
are bounded uniformly in \ssf$t$ \ssf and \ssf$x$\ssf.
Let us split up accordingly
\begin{equation*}
I(t,x)=\ssf\sum\nolimits_{\ssf j=0}^{\,4}I_j(t,x)\,,
\end{equation*}
where
\begin{equation*}
I_j(t,x)=\ssf\tilde{a}_j\;e^{\ssf i\ssf t\ssf\phi(\lambda_0)}\ssb
\int_{\ssf\R}d\mu\;\mu^{\ssf j}\,e^{\ssf(-1+\ssf i\ssf t)\ssf\mu^2}
\qquad(\ssf j\ssb=\ssb0,1,2,3\ssf)
\end{equation*}
and
\begin{equation*}
I_4(t,x)=
e^{\ssf i\ssf t\ssf\phi(\lambda_0)}\ssb
\int_{\ssf\R}d\mu\;\tilde{a}_4(\mu)\,\mu^4\,e^{\ssf(-1+\ssf i\ssf t)\ssf\mu^2}.
\end{equation*}
The first and third expressions are handled by elementary complex integration\,:
\begin{equation*}\begin{aligned}
I_0(t,x)&\textstyle
=\tilde{a}_0\,\sqrt{\pi\ssf}\,e^{\ssf i\ssf t\ssf\phi(\lambda_0)}\ssf
(1\!-\ssb i\ssf t\ssf)^{-\frac12}
=\text{O}\ssf\bigl(\ssf\frac{x^2}{t^2}\ssf
(1\hspace{-.75mm}+\!|t|)^{-\frac12}\bigr)\ssb
=\text{O}\ssf\bigl(\frac{1\,+\,|x|}{(\ssf1\ssf+\ssf|t|\ssf)^{3/2}}\bigr)\,,\\
I_2(t,x)&\textstyle
=\tilde{a}_2\,\frac{\sqrt{\pi\ssf}}2\,e^{\ssf i\ssf t\ssf\phi(\lambda_0)}\ssf
(1\!-\ssb i\ssf t\ssf)^{-\frac32}
=\text{O}\ssf\bigl((1\!+\ssb|t|\ssf)^{-3/2}\bigr)\,.\\
\end{aligned}\end{equation*}
The expressions $I_1(t,x)$ and $I_3(t,x)$ vanish by oddness.
The expression $I_4(t,x)$ is obviously bounded by the finite integral
\begin{equation*}
\int_{\ssf\R}d\mu\;\mu^4\,e^{-\mu^2}\,.
\end{equation*}
In order to improve this estimate when \ssf$|t|$ \ssf is large,
let us split up
\begin{equation*}
\int_{\R}\,d\mu\;
=\int_{\ssf|\mu|\ssf\le\ssf|t|^{-1/2}}\hspace{-1mm}d\mu\;
+\int_{\ssf|\mu|\ssf>\ssf|t|^{-1/2}}\hspace{-1mm}d\mu\;.
\end{equation*}
The first integral is easily estimated,
using the uniform boundedness of \ssf$\tilde{a}_4(\mu)$\,:
\begin{equation*}
\Bigl|\,\int_{\ssf|\mu|\ssf\le\ssf|t|^{-1/2}}\hspace{-1mm}d\mu\;
\tilde{a}_4(\mu)\,\mu^4\,e^{\ssf(-1+\ssf i\ssf t)\ssf\mu^2}\ssf\Bigr|\,
\lesssim\int_{\ssf|\mu|\ssf\le\ssf|t|^{-1/2}}\hspace{-1mm}d\mu\;\mu^4\ssf
\lesssim\,|t|^{-\frac52}\,.
\end{equation*}
After two integration by parts, using
\,$\mu\,e^{\ssf(-1+\ssf i\ssf t)\ssf\mu^2}\hspace{-1mm}
=\!-\,\frac1{2\ssf(1-\ssf i\ssf t)}\,
\frac\partial{\partial\mu}\,e^{\ssf(-1+\ssf i\ssf t)\ssf\mu^2}$\ssb,
the second integral is estimated by
\begin{equation*}
|t|^{-\frac52}
+\ssf|t|^{-2}\ssb\int_{\ssf\R}d\mu\,
(\ssf1\!+\ssb|\mu|\ssf)^2\,e^{-\mu^2}\,.
\end{equation*}
Altogether
\begin{equation*}
I_4(t,x)=\text{O}\ssf\bigl(\ssf(1\!+\ssb|t|\ssf)^{-2}\ssf\bigr)
\end{equation*}
and this concludes the proof of Lemma A.1.
\end{proof}

\noindent
\textbf{Lemma A.2.}
\textit{Consider the oscillatory integral
\begin{equation*}
J(t,x)=\!\int_{-\infty}^{+\infty}\hspace{-1mm}d\lambda\;
a(\lambda)\,e^{\ssf i\ssf t\ssf\phi(\lambda)}
\end{equation*}
where the phase is given again by
\begin{equation*}\textstyle
\phi(\lambda)=\sqrt{\ssf\lambda^2\ssb+\kappa^2\ssf}-\frac{x\,\lambda}t
\end{equation*}
and the amplitude \,$a(\lambda)$
is now a symbol \,{\rm(}of any order\,{\rm)} on \,$\R$\ssf,
which vanishes on the interval \;$[-\kappa,\kappa\ssf]$\ssf.
Then}
\begin{equation*}\textstyle
J(t,x)=\text{O}\ssf(\ssf|t|^{-\infty})
\qquad\forall\;0\ssb\le\ssb|x|\ssb\le\ssb\frac{|t|}2\,.
\end{equation*}

\begin{proof}
According to \eqref{derivativesphi}, \eqref{stationarypoint} and
\eqref{estimatestationarypoint},
\begin{itemize}
\item[$\bullet$]
$\,\phi$ \ssf has a single stationary point
\ssf$\lambda_0\!\in\!\bigl[-\frac{\vphantom{|}\kappa}{\sqrt{3\ssf}},
\frac{\vphantom{|}\kappa}{\sqrt{3\ssf}}\ssf\bigr]$\ssf,
which remains away from the support of \ssf$a$\ssf,
\item[$\bullet$]
$\ssf|\ssf\phi'(\lambda)|\ssb
=\ssb\bigl|\frac{\vphantom{|}\lambda}{\sqrt{\lambda^2+\ssf\kappa^2}}\ssb
-\ssb\frac{\vphantom{|}x}t\ssf\bigr|\ssb
\ge\ssb\frac{\vphantom{|}1}{\sqrt{2\ssf}}\ssb
-\ssb\frac{\vphantom{|}1}{\vphantom{\sqrt{2}}2}\ssb>\ssb0$
\,on \ssf$\supp a$\ssf,
\item[$\bullet$]
$\,\phi''$ is a symbol of order $-3\vphantom{\big|}$\ssf.
\end{itemize}
\vspace{1mm}
\noindent
These facts allow us to perform several integrations by parts based on
\begin{equation*}\textstyle
e^{\ssf i\ssf t\ssf\phi(\lambda)}
=\frac1{i\ssf t\ssf\phi'(\lambda)}\,
\frac\partial{\partial\lambda}\,
e^{\ssf i\ssf t\ssf\phi(\lambda)}
\end{equation*}
and to reach the conclusion.
\end{proof}

\section*{Appendix B}
\label{AppendixB}

In this appendix, we recall the definition of Sobolev spaces on \ssf$\Hn$
and some related inequalities.
We refer to \cite{Tr2} for more details
about function spaces on Riemannian manifolds.

Let  \ssf$\sigma\!\in\!\R$ \ssf and \ssf$1\!<\!q\!<\!\infty$\ssf.
Then \ssf$H^{\sigma,\ssf q}(\Hn)$ \ssf denotes the image of \ssf$L^q(\Hn)$
\ssf under $(-\Delta)^{-\frac\sigma2}$
(in the space of distributions on \ssf$\Hn$),
equipped with the norm
\begin{equation*}
\|\ssf f\ssf\|_{H^{\sigma,q}}
=\ssf\|\ssf(-\Delta)^{-\frac\sigma2}\ssb f\ssf\|_{L^q}\,.
\end{equation*}
In this definition, we may replace \ssf$(-\Delta)^{-\frac\sigma2}$
\ssf by \ssf$D^{-\sigma}\!
=\ssb(-\ssf\Delta\!-\!\rho^{\ssf2}\!+\ssb\kappa^2\ssf)^{-\frac\sigma2}$\ssf,
\ssf as long as
\ssf$\kappa\ssb>\ssb2\,\bigl|\frac12\ssb-\ssb\frac1q\bigr|\,\rho$\,,
\ssf in particular by \ssf$\tilde{D}^{-\sigma}\!
=\ssb(-\ssf\Delta\!-\!\rho^{\ssf2}\!+\ssb\tilde{\kappa}^2\ssf)^{-\frac\sigma2}$\ssf,
\ssf since \ssf$\tilde{\kappa}\!>\!\rho$\ssf.
If \ssf$\sigma\!=\!N$ \ssf is a nonnegative integer,
then \ssf$H^{\sigma,\ssf q}(\Hn)$ co\"{\i}ncides with
the Sobolev space
\begin{equation*}
W^{N,\ssf q}(\Hn)
=\{\,f\!\in\!L^q(\Hn)\mid
\nabla^j\ssb f\!\in\! L^q(\Hn)
\hspace{2mm}\forall\;1\!\le\!j\!\le\!N\,\}
\end{equation*}
defined in terms of covariant derivatives.
In the $L^2$ setting, we write \ssf$H^{\sigma}(\Hn)$
\ssf instead of \ssf$H^{\sigma,\ssf2}(\Hn)$\ssf.
\medskip

\noindent
\textbf{Proposition B.1 (Sobolev embedding theorem).}
\textit{Let \,$1\ssb<\ssb q_1,\qtwo\ssb<\ssb\infty$
\ssf and \,$\sigma_1,\sigma_2\ssb\in\ssb\R$
such that \,$\sigma_1\!-\ssb\sigma_2\ssb
\ge\ssb\frac n{q_1}\!-\!\frac n{\qtwo}\!\ge\ssb0$\ssf.
Then
\begin{equation*}
H^{\sigma_1,\ssf q_1}(\Hn)\subset H^{\sigma_2,\ssf \qtwo}(\Hn)\,.
\end{equation*}
By this inclusion, we mean that
there exists a constant \,$C\!\ge\!0$ such that}
\begin{equation*}
\|f\|_{H^{\sigma_2,\ssf \qtwo}}\le C\;\|f\|_{H^{\sigma_1,\ssf q_1}}
\qquad\forall\;f\!\in\ssb C_c^\infty(\Hn)\,.
\end{equation*}
\medskip


\begin{thebibliography}{9999}



\bibitem{AP}
J.-Ph. Anker, V. Pierfelice, 
\textit{Nonlinear Schr\"odinger equation on real hyperbolic spaces\/},
Ann. Inst. H. Poincar\'e (C) {\it Non Linear Analysis\/} 26 (2009), 1853--1869

\bibitem{APV1}
J.-Ph. Anker, V. Pierfelice, M. Vallarino, 
\textit{The Schr\"odinger equation on Damek--Ricci spaces\/}, 
preprint [hal--00525155, arXiv:1010.2137],
Comm. Part. Diff. Eq. (to appear)

\bibitem{APV2}
J.-Ph. Anker, V. Pierfelice, M. Vallarino,
\textit{The wave equation on real hyperbolic spaces}, 
preprint [hal--00525251, arXiv:1010.2372]



\bibitem{BG}
H. Bahouri, P. G\'erard, 
\textit{High frequency approximation of critical nonlinear wave equations \/},
Amer. J. Math. 121 (1999), 131--175







\bibitem{CK}
M. Christ, A. Kiselev,
\textit{Maximal functions associated to filtrations\/},
J. Funct. Anal. 179 (2001), 409-425













\bibitem{DGK}
P. D'Ancona, V. Georgiev, H. Kubo,
\textit{Weighted decay estimates for the wave equation\/},
J. Diff. Eq. 177 (2001), 146--208

\bibitem{F1}
J. Fontaine,
\textit{Une \'equation semi--lin\'eaire des ondes sur \ssf$\mathbb{H}^3$},
C. R. Acad. Sci. Paris S\'er. I \textit{Math\'ematiques\/} 319 (1994), 935--948

\bibitem{F2}
J. Fontaine,
\textit{A semilinear wave equation on hyperbolic spaces\/},
Comm. Partial Diff. Eq. 22 (1997), 633--659


\bibitem{G}
V. Georgiev,
\textit{Semilinear hyperbolic equations\/},
Mem. Math. Soc. Japan 7 (2000)

\bibitem{GLS}
V. Georgiev, H. Lindblad, C. Sogge,
\textit{Weighted Strichartz estimates and global existence
for semilinear wave equations\/},
Amer. J. Math. 119 (1997), 1291--1319

\bibitem{GV}
J. Ginibre, G. Velo, 
\textit{Generalized Strichartz inequalities for the wave equation\/},
J. Funct. Anal. 133 (1995), 50--68

\bibitem{GV1}
J. Ginibre, G. Velo, 
\textit{The global Cauchy problem for the non linear Klein--Gordon equation\/},
Math. Z. 189 (1985), no. 4, 487--505


 


\bibitem{Ha}
A. Hassani, 
\textit{Equation des ondes
sur les espaces sym\'etriques riemanniens de type non compact\/},
Ph.D. thesis, Universit\'e Paris--Ouest\,/\,Nanterre (2011).

\bibitem{He1}
S. Helgason, 
\textit{Differential geometry, Lie groups, and symmetric spaces\/},
Academic Press (1978), Amer. Math. Soc. (2001)

\bibitem{He2}
S. Helgason, 
\textit{Groups and geometric analysis}
(\textit{integral geometry, invariant differential operators,
and spherical functions\/}),
Academic Press (1984), Amer. Math. Soc. (2002) 

\bibitem{He3}
S. Helgason,
\textit{Geometric analysis on symmetric spaces\/},
Amer. Math. Soc. (1994) 


\bibitem{Ho3}
L.V. H\"ormander,
{\it The analysis of linear partial differential operators III\/}
({\it pseudo--differential operators\/}),
Springer--Verlag (1985, 1994, 2007)

\bibitem{I1}
A.D. Ionescu,
\textit{Fourier integral operators
on noncompact symmetric spaces of real rank one\/},
J. Funct. Anal.  174 (2000), 274--300


\bibitem{IS}
A.D. Ionescu, G. Staffilani,
\textit{Semilinear Schr\"odinger flows on hyperbolic spaces}\,:
\textit{Scattering in $H^1$\/},
Math. Ann. 345 (2009), 133--158

\bibitem{J}
F. John, 
\textit{Blow--up of solutions of nonlinear wave equations in three space dimensions\/},
Manuscripta Math. 28 (1979), 235--265

\bibitem{Kap}
L. Kapitanski,
\textit{Weak and yet weaker solutions of semilinear wave equations\/},
Comm. Partial Diff. Eq. 19 (1994), 1629--1676

\bibitem{KT}
M. Keel, T. Tao,
\textit{Endpoint Strichartz estimates\/}
Amer. J. Math. 120 (1998), 955--980

\bibitem{KP}
S. Klainerman, G. Ponce,
\textit{Global, small amplitude solutions to nonlinear evolution equations\/},
Comm. Pure Appl. Math. 36 (1983), 133--141

\bibitem{Ko}
T.H. Koornwinder,
\textit{Jacobi functions and analysis on noncompact semisimple Lie groups\/},
in \textit{Special functions\/} (\textit{group theoretical aspects and applications\/}),
R.A. Askey \& al. (eds.), Reidel (1984), 1--85




\bibitem{LS}
H. Lindblad, C. Sogge,
\textit{On existence and scattering with minimal regularity
for semilinear wave equations\/}, 
J. Funct. Anal.  130 (1995), 357--426



\bibitem{MNO}
S. Machihara, M. Nakamura, T. Ozawa,
\textit{Small global solutions for nonlinear Dirac equations\/},
Differential Integral Equations 17 (2004), no. 5--6, 623--636

\bibitem{MT}
J. Metcalfe, M.E. Taylor, 
\textit{Nonlinear waves on 3D hyperbolic space\/},
Trans. Amer. Math. Soc. (to appear)



\bibitem{N}
K. Nakanishi,
\textit{Scattering theory
for the nonlinear Klein--Gordon equation
with Sobolev critical power\/},
Internat. Math. Res. Notices 1 (1999), 31--60


\bibitem{P}
V. Pierfelice,
\textit{Weighted Strichartz estimates
for the Schr\"odinger and wave equations
on Damek--Ricci spaces\/},
Math. Z.  260 (2008), 377--392


\bibitem{Si}
T. Sideris, 
\textit{Nonexistence of global solutions
to semilinear wave equations in high dimensions\/}, 
J. Diff. Eq. 52 (1984), 378--406




\bibitem{Stra}
W. Strauss,
\textit{Nonlinear wave equations\/}, 
CBMS Regional Conf. Ser. Math. 73, Amer. Math. Soc. (1989) 




\bibitem{Ta}
D. Tataru,
\textit{Strichartz estimates in the hyperbolic space
and global existence for the semilinear wave equation\/},
Trans. Amer. Math. Soc. 353 (2001), 795--807



\bibitem{Tr2}
H. Triebel,
\textit{Theory of function spaces II\/},
Monographs Math. 84, Birkh\"auser (1992)

\end{thebibliography}
\end{document}